\let\goth\mathfrak
\def\gg{\goth g}
\def\gh{\goth h}
\def\gl{\goth l}
\def\gb{\goth b}
\def\gn{\goth n}
\def\gq{\goth q}
\def\ggl{\goth{gl}}
\def\beq{\begin{equation*}}
\def\eeq{\end{equation*}}
\def\bea{\begin{aligned}}
\def\eea{\end{aligned}}
\def\bee{\begin{enumerate}}
\def\eee{\end{enumerate}}
\def\ds{\displaystyle}
\def\cplus{\hbox{$\supset${\raise1.05pt\hbox{\kern -0.55em
${\scriptscriptstyle +}$}}\ }}
\DeclareMathOperator{\Hom}{Hom} 
 \DeclareMathOperator{\wt}{\rm{wt}}
 \DeclareMathOperator{\ch}{\rm{ch}}
\DeclareMathOperator{\End}{End}
\DeclareMathOperator{\Id}{Id}
\DeclareMathOperator{\Cliff}{{\rm Cliff}} 
\DeclareMathOperator{\ad}{\rm{ad}}
\numberwithin{equation}{section}
\theoremstyle{plain}
\newtheorem{theorem}{Theorem}[section]
\newtheorem{proposition}[theorem]{Proposition}
\newtheorem{lemma}[theorem]{Lemma}
\newtheorem{corollary}[theorem]{Corollary}
\theoremstyle{definition}
\newtheorem{definition}[theorem]{Definition}
\newtheorem{example}[theorem]{Example}
\newtheorem*{remark}{Remark}
\theoremstyle{remark}
\newtheorem*{acknowledgement}{Acknowledgements}
\def\N{\mathbb N}
\def\Z{\mathbb Z}
\def\C{\mathbb C}
\def\F{\mathbb F}
\def\K{\mathbb K}
\def\zz{\zeta}
\def\ee{\eta}
\def\fqk{f_{\zz}q^h k_{\ee}}
\def\fz{f_{\zz}}
\def\qh{q^h}
\def\fqke{f_{\zz}q^h k_{\ee} \et}
\def\dd{\delta_{ij}}
\def\al{\alpha}
\def\be{\beta}
\def\ep{\epsilon}
\def\zz{\zeta}
\def\la{\lambda}
\def\ee{\eta}
\def\De{\Delta}
\def\ot{\otimes}
\def\op{\oplus}
\def\ra{\longrightarrow}
\def\map{\longmapsto}
\def\fqk{f_{\zz}q^h k_{\ee}}
\def\fz{f_{\zz}}
\def\qh{q^h}
\def\et{e_{\tau}}
\def\fqke{f_{\zz}q^h k_{\ee} \et}
\def\ta{\tau}
\def\gdiw{\Lambda^{+}_{\bar 0}}
\def\qdiw{\Lambda^{+}}
\def\Ao{\mathbf{A}_1}
\def\UAo{U_{\Ao}}
\def\Uq{U_q}
\def\Uqqn{U_q(\gg)}
\def\Uqgeq{\Uq^{\geq 0}}
\def\Cliffqla{\Cliff_q(\la)}
\def\CliffAola{\Cliff_{\Ao}(\la)}
\def\Eib{e_{\bar i}}
\def\Ei{e_i}
\def\oEj{\overline{e_j}}
\def\Fib{f_{\bar i}}
\def\Fi{f_i}
\def\klb{k_{\bar l}}
\def\VAo{V_{\Ao}}
\DeclareMathOperator{\rank}{rank}
\def\Jo{\mathbf{J}_1}
\def\qhh{(q^h;0)_q}
\def\oEi{\overline{e_i}}
\def\oEib{\overline{\Eib}}
\def\oFi{\overline{f_i}}
\def\oFib{\overline{\Fib}}
\def\oh{\overline h}
\def\oklb{\overline{\klb}}
\begin{document}
\title[Highest weight modules over the quantum queer superalgebra $U_q(\gq(n))$]
{Highest Weight Modules over \\ Quantum Queer Superalgebra
$U_q(\gq(n))$}

\author[Grantcharov, Jung, Kang, Kim]{Dimitar Grantcharov$^1$, Ji Hye Jung$^{2,3}$, Seok-Jin Kang$^{2}$ and Myungho Kim$^{2,3}$}

\address{Department of Mathematics \\
         University of Texas at Arlington \\ Arlington, TX 76021, USA}

         \email{grandim@uta.edu}

\address{Department of Mathematical Sciences
         and
         Research Institute of Mathematics \\
         Seoul National University \\ San 56-1 Sillim-dong, Gwanak-gu \\ Seoul 151-747, Korea}

         \email{jhjung@math.snu.ac.kr}

\address{Department of Mathematical Sciences
         and
         Research Institute of Mathematics \\
         Seoul National University \\ San 56-1 Sillim-dong, Gwanak-gu \\ Seoul 151-747, Korea}

         \email{sjkang@math.snu.ac.kr}

\address{Department of Mathematical Sciences
         and
         Research Institute of Mathematics \\
         Seoul National University \\ San 56-1 Sillim-dong, Gwanak-gu \\ Seoul 151-747, Korea}

         \email{mkim@math.snu.ac.kr}

\thanks{$^{1}$This research was supported by a UT Arlington REP Grant.}
\thanks{$^{2}$This research was supported by KRF Grant \# 2007-341-C00001.}
\thanks{$^{3}$This research was supported by BK21 Mathematical Sciences Division.}

\begin{abstract}
In this paper, we investigate the structure of highest weight
modules over the quantum queer superalgebra $U_q(\gq(n))$. The key
ingredients are the triangular decomposition of $U_q(\gq(n))$ and
the classification of finite dimensional irreducible modules over
quantum Clifford superalgebras. The main results we prove are the
classical limit theorem and the complete reducibility theorem for
$U_q(\gq(n))$-modules in the category ${\mathcal O}_{q}^{\geq 0}$.
\end{abstract}

\maketitle

\section*{Introduction}

Since its inception, the representation theory of Lie superalgebras
has been known to be much more complicated than the corresponding
theory of Lie algebras. One of the Lie superalgebra series attracts
special attention due to its resemblance of the Lie algebra $\gg \gl_n$ on the one hand and because of the unique properties of its
structure and representations on the other. This is the so-called
{\it queer} (or {\it strange}) Lie superalgebra $\gq (n)$ which
consists of all endomorphisms of $\C^{n|n}$ that commute with an odd
automorphism $P$ of $\C^{n|n}$ such that $P^2 = \Id$. The
queer nature of $\gq(n)$ is partly due to the nonabelian structure
of its Cartan subsuperalgebra $\gh$ having a nontrivial odd part
$\gh_{\bar{1}}$.  Another unique property of $\gq(n)$ is that,
although it has no invariant bilinear form, it admits an invariant
odd bilinear form. Because of the nonabelian structure of $\gh$, the
study of the highest weight modules of $\gq(n)$ requires some tools
in addition to the standard technique. For example, the highest
weight space ${\bold v}_{\lambda}$ of an irreducible highest weight
$\gq(n)$-module $V(\lambda)$ has a Clifford module structure. The
case when $V(\lambda)$ is a tensor module; i.e., a submodule of some
tensor power $V^{\otimes r}$ of the natural $\gq(n)$-module $V =
\C^{n|n}$, was treated first by Sergeev in 1984. In \cite{Se2}
Sergeev established several important results, among which are the
complete reducibility of $V^{\otimes r}$, a  character formula of
$V(\lambda)$, and an analog of the fundamental Schur-Weyl duality,
often referred as Sergeev duality.  The characters of all simple
finite-dimensional $\gq(n)$-modules have been found by Penkov and
Serganova in 1996 (see \cite{PS2} and \cite{PS3}) via an algorithm
using a supergeometric version of the Borel-Weil-Bott Theorem. In 2004 Brundan, \cite{B},
obtained the character formula of Penkov and Serganova using a different approach and
formulated a conjecture for the characters of the irreducible modules in the category $\mathcal O$.
Important results related to the simplicity of the highest weight $\gq(n)$-modules were obtained recently by Gorelik in \cite{G}.

In this paper we initiate the study of highest weight
representations of the quantum superalgebra $U_q(\gq (n))$.  The aim
of this paper is twofold. We want to study highest weight
$U_q(\gq(n))$-modules on the one hand, and to build the foundations
of the crystal bases theory for the tensor modules of $U_q(\gq (n))$
on the other. The latter problem will be treated in a future work.

A quantum deformation of the universal enveloping algebra of $\gq
(n)$ was constructed first by Olshanski in \cite{O}. Olshanski's
construction is a flat deformation of the universal enveloping
algebra $U(\gq (n))$ of $\gq (n)$ and is a quantum enveloping
superalgebra in the sense of Drinfeld (\cite{Dr}, \S 7). The idea
in \cite{O} is to apply a suitable modification of the procedure
used by Faddeev, Reshetikhin, and Takhtajan in \cite{RTF} -- using
an element $S$ in $\End (\C^{n|n})^{\otimes 2}$ that satisfies the
quantum Yang-Baxter equation. However, as pointed out by
Olshanski, the $r$-matrix $r \in \gq(n)^{\otimes 2}$ does not
satisfy the classical Yang-Baxter equation. Thus no quantum
analogue of $U(\gq (n))$ can be a quasi-triangular Hopf algebra.

In the present paper, based on the description of Olshanski, we give
a presentation of $U_q(\gq (n))$ in terms of generators and
relations so that the relations are quantum deformations of the
relations of $\gq(n)$ obtained in \cite{LS}. Using this
presentation, we find a natural triangular decomposition of $U_q(\gq
(n))$, and then introduce the notion of highest weight modules and
Weyl modules. Similarly to the case of $\gq(n)$, in order to study
highest weight modules, one has to describe the modules over the
quantum Clifford superalgebra ${\rm Cliff}_q (\lambda)$ for a weight $\lambda$ of $\gq (n)$.
 These modules, as we show in Section
\ref{section_clifford}, do not have the same structure as the ones
over the classical Clifford superalgebra $\Cliff (\lambda)$. For
example, the irreducible modules over ${\rm Cliff}_q (\lambda)$ are
parity invariant for much larger set  of weights $\lambda$, compared
with the irreducibles over $\Cliff (\lambda)$.

In the last two sections of the paper we focus on the category
${\mathcal O}_{q}^{\geq 0}$ of finite dimensional $U_q(\gq
(n))$-modules all whose weights are of the form $\lambda_1
\epsilon_1 +\cdots+ \lambda_n \epsilon_n$ ($\lambda_i \in
\Z_{\geq 0}$). One of our main results is a classical limit
theorem for the irreducible modules in $ {\mathcal O}_q^{\geq 0}$. Due to the structure of the quantum Clifford superalgebra, the
classical limit theorem is non-standard, as it is not true in
general that the classical limit $V^1$ of an irreducible highest
weight $U_q(\gq(n))$-module $V^q(\lambda)$ is $V(\lambda)$. In
fact, as we show in Section \ref{section_limit}, if $\lambda$ has
even number of nonzero coordinates $\lambda_1>...>\lambda_{2k}$,
then $\ch V^1 =2 \ch V(\lambda)$. The ``queer'' version of the
classical limit theorems are Theorem \ref{V1_is_Uqn-module} and
Theorem \ref{U1_is_Uqn-module}. With the aid of the classical
limit theorems we obtain another important result in the last
section: the category ${\mathcal O}_{q}^{\geq 0}$  is semisimple.

The organization of the paper is as follows. In Section
\ref{section_q_n} we recall some definitions and basic results
about $\gq (n)$. The realization of $U_q(\gq(n))$ and its
triangular decomposition is provided in Section
\ref{section_quantum_q}. Section \ref{section_clifford} is devoted
to the study of the quantum Clifford superalgebra and its modules.
In Section \ref{section_highest} we introduce the notion of
highest weight modules and Weyl modules. In particular, we show that every Weyl
module $W^q(\lambda)$ has a unique irreducible quotient
$V^q(\lambda)$. The classical limit theorem for the category
${\mathcal O}_{q}^{\geq 0}$ is proved in Section
\ref{section_limit} and the complete reducibility of
$U_q(\gq(n))$-modules in ${\mathcal O}_{q}^{\geq 0}$ is
established in the last section.

\vskip 5mm

%%%%%%%%%%%%%%%%%%%%%%%%%%%%%%%%%%%%%%%%%%%%%%%%%%%%%%%%%%%%%%%%%%%%%%%%%%%%%%%%%%%%

\section{The Lie superalgebra $\gq(n)$ and its representations} \label{section_q_n}
The ground field  in this section will be $\C$. By $\Z_{\geq 0}$ and
$\Z_{> 0}$ we denote the nonnegative integers and strictly positive
integers, respectively. We set $\Z_2 = \Z / 2\Z$. Every vector space
$V = V_{\bar{0}} \oplus V_{\bar{1}}$ over $\C$ is $\Z_{2}$-graded
with even part $V_{\bar{0}}$ and odd part $V_{\bar{1}}$. We will
write $\dim V = m | n$ if $\dim V_{\bar{0}} = m$ and $\dim
V_{\bar{1}} = n$. By $\Pi$ we denote the parity change functor;
i.e., $\Pi V$ is a vector space for which $\Pi V_{\bar{0}} =
V_{\bar{1}}$ and $\Pi V_{\bar{1}} = V_{\bar{0}}$. The direct sum of
$r$ copies of a vector space $V$ will be written as $V^{\oplus r}$.

The Lie subsuperalgebra $\gg = \gq(n)$ of $\ggl (n|n)$ is defined in
matrix form by
\begin{equation*}
\gg = \gq(n):= \left\{ \left( \begin{array}{cc} A& B \\ B & A
\end{array} \right)\; \Big| \; A,B \in \gg \gl_n    \right\}.
\end{equation*}
By definition,
a subsupealgebra $\gh =\gh_{\bar{0}} \oplus \gh_{\bar{1}}$ of $\gg$ is a {\it Cartan subsuperalgebra}, if
it is a self-normalizing nilpotent subsuperalgebra.  Every such $\gh$ has a nontrivial odd
part $\gh_{\bar{1}}$.  We fix $\gh$ to be the {\it standard} Cartan subsuperalgebra, namely the one for which $\gh_{\bar{0}}$ has a
basis $\{k_{1},...,k_{n} \}$ and $\gh_{\bar{1}}$ has a basis
$\{k_{\bar{1}},...,k_{\bar{n}} \}$, where $k_{i}:=\left(
\begin{array}{cc} E_{i,i}& 0 \\ 0 & E_{i,i}
\end{array} \right)$, $k_{\bar{i}}:=\left(
\begin{array}{cc} 0& E_{i,i} \\ E_{i,i} & 0
\end{array} \right)$ and  $E_{i,j}$ is the $n \times n$ matrix having
$1$ in the $(i,j)$ position and $0$ elsewhere. One should note that all Cartan subsuperalgebras of $\gg$ are conjugate to $\gh$.  Let  $\{ \epsilon_1,...,\epsilon_n\}$  be the basis of
$\gh_{\bar{0}}^*$ dual to $\{ k_1,...,k_n \}$. We denote $k_i - k_{i+1}$ by $h_i$ for $i=1, 2, \cdots, n-1$. The root system $\Delta = \Delta_{\bar{0}} \cup
\Delta_{\bar{1}} $ of $\gg$ has identical even and odd parts.
Namely, $\Delta_{\bar{0}} =  \Delta_{\bar{1}} = \{\epsilon_i -
\epsilon_j\; | \; 1 < i\neq j < n \}$. In particular, the root space
decomposition $\gg = \bigoplus_{\alpha \in \Delta} \gg_{\alpha}$ has
the property that $\gg_{\alpha}$ has dimension $1|1$ for every $\alpha
\in \Delta$. Set $\alpha_i:=\epsilon_{i} -
\epsilon_{i+1}$.
Let $Q=\bigoplus_{i=1}^{n-1} \Z \alpha_i$ be the root lattice and
$Q_{+}=\sum_{i=1}^{n-1} \mathbb{Z}_{\geq 0} \alpha_i$ be the
positive root lattice. The notation $Q_{-}=-Q_{+}$ will also be
used. There is a partial ordering on $\mathfrak{h}_{\bar{0}}^*$
defined by $\lambda \geq \mu$ if and only if $\lambda - \mu \in
Q_{+}$ for $\lambda, \mu \in \mathfrak{h}_{\bar{0}}^* $. The root
space $\gg_{\alpha_i}$ is spanned by $e_i:=\left(
\begin{array}{cc} E_{i,i+1}& 0 \\ 0 & E_{i,i+1}
\end{array} \right)$ and $e_{\bar{i}} := \left(
\begin{array}{cc}0 &  E_{i,i+1} \\  E_{i,i+1} & 0
\end{array} \right)$, while $\gg_{-\alpha_i}$ is spanned by
$f_i:=\left(
\begin{array}{cc} E_{i+1,i}& 0 \\ 0 & E_{i+1,i}
\end{array} \right)$ and $f_{\bar{i}} := \left(
\begin{array}{cc} 0 & E_{i+1,i} \\ E_{i+1,i} & 0
\end{array} \right)$. Let $P:=\bigoplus_{i=1}^n \Z \epsilon_i$ be the
weight lattice of $\gg$ and denote by $P^{\vee} := \bigoplus_{i=1}^n
\Z k_i $ the dual weight lattice.

Let $I:=\{1,2, \cdots , n-1 \}$ and  $J:=\{1,2, \cdots, n\}$.
\begin{proposition} \cite{LS}
  The Lie superalgebra $\gg$ is generated by the elements
  $e_i, e_{\bar{i}},f_i, f_{\bar{i}}$ $(i \in I)$, $\gh_{\bar{0}}$ and
  $k_{\bar{l}}$ $(l \in J)$ with the following defining relations:
\allowdisplaybreaks
\begin{align*}
 & [h,h']=0 \ \  \text{for} \   h, h^{\prime} \in \gh_{\bar{0}}, \\
 & [h,e_i]=\al_i(h)e_i, \  [h,e_{\bar{i}}]=\alpha_i(h)e_{\bar{i}}  \ \  \text{for} \   h \in \gh_{\bar{0}}, \  i \in I,  \\
 & [h,f_i]=-\al_i(h)f_i, \  [h,f_{\bar{i}}]=-\alpha_i(h)f_{\bar{i}} \ \  \text{for} \   h \in \gh_{\bar{0}}, \  i \in I, \\
 & [h,k_{\bar{l}}]=0 \ \ \text{for} \  h \in \gh_{\bar{0}}, \ l \in J,\\
\allowdisplaybreaks
 & [e_i,f_j]=\dd(k_i-k_{i+1}), \ [e_i,f_{\bar{j}}]=\dd(k_{\bar{i}}-k_{\overline{i+1}}) \ \  \text{for} \ i, j \in I, \\
 & [e_{\bar i},f_j]=\dd(k_{\bar i}-k_{\overline{i+1}}), \ [k_{\overline l},e_i]=\al_i(k_{l})e_{\overline i} \ \  \text{for} \ i, j \in I, \ l \in J, \\
 & [k_{\overline l},f_i]=- \al_i(k_l) f_{\overline i}, \   [e_{\bar i},f_{\bar{j}}]=\dd(k_{i}+k_{i+1}) \  \text{for} \ i, j \in I, \ l \in J,   \\
 & [k_{\bar l},e_{\bar{i}}]= \begin{cases}
  e_i \ \ \ \text {if} \ \ l=i, i+1 \\
  0   \ \ \ \text{otherwise}\end{cases}
  \ \ \text{for} \ \ i \in I, \ l \in J, \\
 & [k_{\bar l}, f_{\bar{i}}]= \begin{cases}
  f_i \  \ \ \text {if} \  \ l=i, i+1 \\
  0   \ \ \ \text{otherwise} \end{cases}
   \ \ \text{for} \ \ i \in I, \ l \in J,\\
 & [e_i,e_{\bar j}] =[e_{\bar i},e_{\bar j}] = [f_i,f_{\bar j}] =[f_{\bar i},f_{\bar j}] =0 \ \ \ \text{for} \ i,j \in I, \ |i-j| \neq 1,  \\
 & [e_i,e_j]=[f_i,f_j]=0 \ \ \ \text{for} \ i,j \in I, \ |i-j| > 1,  \\
 & [e_{i}, e_{i+1}] = [e_{\bar{i}}, e_{\overline{i+1}}], [e_{i}, e_{\overline{i+1}}] = [e_{\bar{i}}, e_{i+1}], \\
 & [f_{i+1}, f_{i}] = [f_{\overline{i+1}}, f_{\bar{i}}],  [f_{i+1}, f_{\bar{i}}]=   [f_{\overline{i+1}}, f_{i}], \\
 & [k_{\bar i}, k_{\bar j}]=\dd 2k_{i} \ \ \text{for}  \ i, j \in J,\\
 & [e_i,[e_{i},e_j]] = [e_{\bar i}, [e_i, e_j]]=0 \ \ \ \text{for} \ i,j \in I, \ |i-j|=1,\\
 & [f_i,[f_{i},f_j]]= [f_{\bar i}, [f_i, f_j]]=0 \ \ \ \text{for} \  i,j \in I, \ |i-j|=1.
\end{align*}
\end{proposition}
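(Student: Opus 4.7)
The proof consists of three steps: the listed elements generate $\gg = \gq(n)$; the listed relations hold in $\gg$; and no further relations are needed. Write $\tilde\gg$ for the Lie superalgebra defined abstractly by the presentation. Steps one and two produce a surjective Lie superalgebra map $\phi:\tilde\gg \twoheadrightarrow \gg$; step three identifies $\phi$ as an isomorphism.

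Generation is quick. The even part $\gg_{\bar 0} \cong \ggl_n$ is classically generated by $\gh_{\bar 0}$ together with $\{e_i, f_i\}_{i \in I}$, and the relation $[k_{\bar l}, e_i] = \al_i(k_l)e_{\bar i}$ (with $l=i$ or $i+1$) recovers $e_{\bar i}$ from the listed generators, with the analogous identity for $f_{\bar i}$. Iterated brackets among the $e_j, e_{\bar j}$ produce the non-simple odd root vectors, while $\{k_{\bar 1}, \ldots, k_{\bar n}\}$ spans $\gh_{\bar 1}$ by definition. Verifying the relations in $\gg$ is then a series of direct $2 \times 2$ block-matrix computations. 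The mildly delicate identities are $[k_{\bar i}, k_{\bar j}] = 2\dd k_i$ (which reflects $P^2 = \Id$), the characteristic plus sign in $[e_{\bar i}, f_{\bar j}] = \dd(k_i + k_{i+1})$ (the imprint of the odd invariant form), and the mixed identities $[e_i, e_{i+1}] = [e_{\bar i}, e_{\overline{i+1}}]$ and $[e_i, e_{\overline{i+1}}] = [e_{\bar i}, e_{i+1}]$ together with their $f$-analogs.

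The substantive step is completeness of the presentation. One first establishes a triangular decomposition $\tilde\gg = \tilde\gn^- \oplus \tilde\gh \oplus \tilde\gn^+$, where $\tilde\gn^+$ (resp.\ $\tilde\gn^-$) is the subsuperalgebra of $\tilde\gg$ generated by $\{e_i, e_{\bar i}\}_{i \in I}$ (resp.\ $\{f_i, f_{\bar i}\}_{i \in I}$) and $\tilde\gh$ is the span of $\gh_{\bar 0}$ and $\{k_{\bar 1}, \ldots, k_{\bar n}\}$. All mixed commutators $[e_i,f_j], [e_{\bar i},f_j], [e_i,f_{\bar j}], [e_{\bar i},f_{\bar j}]$ listed in the presentation land in $\tilde\gh$, and the adjoint action of $\gh_{\bar 0}$ is diagonal on the simple plus/minus generators, so a standard straightening argument writes every element of $\tilde\gg$ as a sum of PBW-type monomials of the form $(f\text{-word})\cdot h \cdot (e\text{-word})$, with each summand carrying an $\gh_{\bar 0}^*$-weight grading.

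The main obstacle is the dimension bound $\dim \tilde\gn^+ \leq \dim \gn^+$, where the Serre-type relations $[e_i,[e_i,e_j]] = [e_{\bar i},[e_i,e_j]] = 0$ for $|i-j|=1$ become essential. The cleanest route is to exhibit a faithful $\tilde\gg$-representation: equip the natural module $\C^{n|n}$ with the matrix actions of the abstract generators, verify the relations act trivially, and use it (or suitably chosen tensor powers) as a separator of the root spaces of $\tilde\gn^+$. This forces each positive root space in $\tilde\gn^+$ to have dimension at most $1|1$, hence $\dim \tilde\gg \leq \dim \gg$. Combined with the already-established surjectivity of $\phi$ we conclude that $\phi$ is an isomorphism, completing the proof.
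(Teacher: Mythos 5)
The paper does not actually prove this proposition: it is quoted from Leites--Serganova \cite{LS}, and the only argument supplied is the subsequent Remark showing that the paper's slightly modified list of relations is equivalent to the original one in \cite{LS}. So your attempt to give a self-contained proof is doing more than the paper does, and your first two steps (generation, and verification of the relations by $2\times 2$ block computations, giving a surjection $\phi:\tilde{\gg}\twoheadrightarrow\gg$) are fine, as is the triangular decomposition $\tilde{\gg}=\tilde{\gn}^-\oplus\tilde{\gh}\oplus\tilde{\gn}^+$ obtained from the mixed relations and the $\ad\gh_{\bar 0}$-weight grading.

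The gap is in your final step. To conclude that $\phi$ is an isomorphism you need the \emph{upper} bound $\dim\tilde{\gn}^+_{\alpha}\leq 1|1$ for each positive root $\alpha$, but the tool you propose --- the natural module $\C^{n|n}$ (or its tensor powers) viewed as a ``separator'' --- cannot deliver it. Any representation of $\tilde{\gg}$ obtained by letting the abstract generators act by their matrices factors through $\phi$, hence kills $\ker\phi$ identically; such a representation can only detect the image of $\phi$ and therefore yields \emph{lower} bounds on dimensions of root spaces (i.e.\ it re-proves surjectivity), never upper bounds. Establishing that the representation is faithful on $\tilde{\gg}$ is exactly the statement being proved, so the argument is circular. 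What is actually required is an internal spanning argument in $\tilde{\gn}^+$: using the Serre-type relations $[e_i,[e_i,e_j]]=[e_{\bar i},[e_i,e_j]]=0$, the commutativity relations for $|i-j|>1$, and the mixed identities $[e_i,e_{i+1}]=[e_{\bar i},e_{\overline{i+1}}]$, $[e_i,e_{\overline{i+1}}]=[e_{\bar i},e_{i+1}]$, one shows by induction on height that the root space of weight $\epsilon_i-\epsilon_j$ ($i<j$) is spanned by one even and one odd iterated bracket. That combinatorial reduction is the substance of the result in \cite{LS}, and your proposal replaces it with a device that cannot work.
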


\begin{remark}
 We modified the relations given in \cite{LS}. More precisely, we replaced the relations
\begin{equation} \bea
&[e_{\bar i},[e_i,e_{\bar j}]] = 0 \ \ \ \text{for} \  i,j \in I, \ |i-j|=1,\\
&[f_{\bar i},[f_i,f_{\bar j}]]=0 \ \ \ \text{for} \  i,j \in I, \ |i-j|=1 \label{old relation in LS}
\eea \end{equation}
by
\begin{equation} \bea
 & [e_{i}, e_{i+1}] = [e_{\bar{i}}, e_{\overline{i+1}}], [e_{i}, e_{\overline{i+1}}] = [e_{\bar{i}}, e_{i+1}], \\
 & [f_{i+1}, f_{i}] = [f_{\overline{i+1}}, f_{\bar{i}}],  [f_{i+1}, f_{\bar{i}}]=   [f_{\overline{i+1}}, f_{i}]. \label{new relation in LS}
\eea \end{equation}

Since \eqref{old relation in LS} can be derived from \eqref{new relation in LS} (and other ones), we can easily see that these two presentations are equivalent.
\end{remark}

The universal enveloping algebra $U(\gg)$ is obtained from the
tensor algebra $T(\gg)$ by factoring out by the ideal generated by
the elements $[u,v]-u \ot v + (-1)^{\al \be} v \ot u$, where $ \al
, \be \in \Z_2, \ u \in \gg_{\al}, \ v \in \gg_{\be}$. Let $U^+$
(respectively, $U^0$ and $U^-$) be the subalgebra of $U(\gg)$
generated by the elements $e_{i}, e_{\bar i}$ ($i\in I$)
(respectively, by $k_i, k_{\bar i}$ ($i \in J$) and by $f_i,
f_{\bar i}$ ($i \in I$)). By the Poincar\'e-Birkhoff-Witt theorem,
the universal enveloping algebra has the triangular decomposition:
\begin{equation}\label{eq:tri decomp of U(q(n))}
   U(\gg) \cong U^- \ot U^0 \ot U^+.
\end{equation}

%\begin{proposition}\label{tri decom of U(q(n))}
%  $U(\gg) \cong U^- \ot U^0 \ot U^+$.
%\end{proposition}

A $\gg$-module $V$ is called a {\it weight module} if it admits a {\it weight space decomposition}
   \begin{equation*}
     \begin{aligned}
       V = \bigoplus_{\mu \in \gh_{\bar 0}^*} V_{\mu},
       ~~\text{where}~~ V_{\mu} =\{ v \in V ~|~ h v = \mu(h)v ~~\text{for all}~~ h \in \gh_{\bar 0} \}.
     \end{aligned}
   \end{equation*}
For a weight $\gg$-module $M$ denote by $\wt (M)$ the set of weights $\lambda \in \gh_{\bar 0}^*$ for which $M_{\lambda} \neq 0$. Every submodule of a weight module is also a weight module. If
$\dim_{\mathbb{C}} V_{\mu} < \infty$ for all $\mu \in \gh_{\bar
0}^*$, the {\it character} of $V$ is defined to be
    \begin{equation*}
      \begin{aligned}
        \ch V = \sum_{\mu \in \gh_{\bar 0}^*} (\dim_{\mathbb{C}} V_{\mu})~e^{\mu},
      \end{aligned}
    \end{equation*}
where $e^{\mu}$ are formal basis elements of the group algebra
$\mathbb{C}[\gh_{\bar 0}^*]$ with the multiplication given by
$e^{\lambda}e^{\mu}=e^{\lambda + \mu}$ for all $\lambda, \mu \in
\gh_{\bar 0}^*$.

 Denote by $\gb_+$ the {\it
standard Borel subsuperalgebra} of $\gg$ generated by $k_l, k_{\bar{l}}$ ($l \in J$) and  $e_i$, $e_{\bar{i}}$ ($ i \in I$).  A weight module $V$ is called a {\it highest weight module} if it is
generated over $\gg$ by a finite dimensional irreducible
$\gb_+$-submodule (see \cite[Definition 4]{PS1}).
\begin{proposition} \cite{P}
Let $\bold v$ be a finite dimensional irreducible
$\mathbb{Z}_2$-graded $\gb_+$-module.
\begin{enumerate}%[{\quad \rm{()}}]
\item[{\rm (1)}]  The maximal nilpotent subsuperalgebra $\gn$ of $\gb_+$ acts on $\bold v$ trivially.
\item[{\rm (2)}]  For any weight $\mu \in \gh_{\bar 0}^*$, consider
the symmetric bilinear form $F_{\mu}(u,v):=\mu([u,v])$ on $\gh_{\bar
1}$ and let $\Cliff (\mu)$ be the Clifford superalgebra of the
quadratic space $(\gh_{\bar 1}, F_{\mu})$. Then
there exists a unique weight $\la \in \gh_{\bar 0}^*$ such that
${\bold v}$ is endowed with a canonical $\mathbb{Z}_2$-graded
$\Cliff (\la)$-module structure and ${\bold v}$ is determined by
$\lambda$ up to $\Pi$.
\item[{\rm (3)}]  $\gh_{\bar 0}$ acts on
${\bold v}$ by the weight $\la$ determined in {\rm (2)}.
\end{enumerate}
\end{proposition}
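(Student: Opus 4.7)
The plan is to prove the three parts in the natural order (1), (3), (2), since each uses the previous. Throughout I will use that $\gb_+ = \gh \oplus \gn$ where $\gn = \bigoplus_{\al \in \Delta^+}\gg_\al$, and that $\gh$ is a Cartan subsuperalgebra with abelian even part $\ghz$ commuting with the odd part $\gh_{\bar 1}$, whose only nonzero brackets are $[k_{\bar i}, k_{\bar j}] = 2\delta_{ij}k_i$.

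For part (1), I would argue that $\gn$ is an ideal of $\gb_+$ acting locally nilpotently on the finite-dimensional space $\bold v$. By the super-analogue of Engel's theorem applied to the nilpotent Lie superalgebra $\gn$, the invariant subspace $\bold v^{\gn} := \{v \in \bold v \mid x \cdot v = 0 \text{ for all } x \in \gn\}$ is nonzero. Since $\gn$ is an ideal in $\gb_+$, the subspace $\bold v^{\gn}$ is $\gb_+$-stable and $\Z_2$-graded, so by the irreducibility of $\bold v$ as a $\Z_2$-graded $\gb_+$-module, $\bold v^{\gn} = \bold v$.

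For part (3), by (1) the $\gb_+$-action on $\bold v$ factors through $\gb_+/\gn \cong \gh$, and $\bold v$ is an irreducible $\Z_2$-graded $\gh$-module. Because $\ghz$ is central in $\gh$ (indeed $[\ghz, \ghz]=0$ and $[\ghz, \gh_{\bar 1}]=0$ from the defining relations), a Schur-lemma type argument in the $\Z_2$-graded setting shows that each $h \in \ghz$ acts on $\bold v$ by a scalar, which we denote $\la(h)$. This defines a weight $\la \in \ghz^*$ such that $\ghz$ acts by $\la$ on $\bold v$, giving (3).

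For part (2), with $\la$ fixed, the $\gh$-module structure on $\bold v$ descends to a representation of the associative superalgebra $A_\la := U(\gh)/\langle h - \la(h) \mid h \in \ghz \rangle$. The odd generators $k_{\bar i}$ then satisfy $k_{\bar i}k_{\bar j} + k_{\bar j}k_{\bar i} = 2\delta_{ij}\la(k_i)$, which is precisely the defining relation of the Clifford superalgebra $\Cliff(\la)$ attached to the quadratic form $F_\la$ on $\gh_{\bar 1}$. Thus $\bold v$ is a $\Z_2$-graded $\Cliff(\la)$-module, irreducibly so since any $\Z_2$-graded $\Cliff(\la)$-submodule would be $\gh$-stable. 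The uniqueness up to $\Pi$ then follows from the standard classification of finite-dimensional irreducible $\Z_2$-graded modules over the Clifford superalgebra: factoring out the radical of $F_\la$ reduces to the non-degenerate case, where there is exactly one such module up to parity change (one or two ordinary irreducibles depending on the parity of the rank of $F_\la$). The weight $\la$ is unique because it is recovered intrinsically as the scalar by which $\ghz$ acts, so I expect this classification step over a possibly degenerate Clifford superalgebra to be the only nontrivial technical point.
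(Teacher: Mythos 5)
The paper does not actually prove this proposition; it is quoted from Penkov's paper \cite{P} and used as a black box, so there is no in-paper argument to compare against. Your route --- super-Engel for (1), centrality of $\ghz$ in $\gh$ plus graded Schur for (3), and the presentation $U(\gh)/\langle h-\la(h)\rangle \cong \Cliff(\la)$ together with the classification of graded-irreducible Clifford modules for (2) --- is the standard one and is essentially correct. Parts (2) and (3) are fine as written: $\ghz$ does commute with $\gh_{\bar 1}$ (the relation $[h,k_{\bar l}]=0$), the even eigenspace argument gives the scalar action, the relation $k_{\bar i}k_{\bar j}+k_{\bar j}k_{\bar i}=2\delta_{ij}\la_i$ matches $F_\la$, the radical of $F_\la$ generates a nilpotent ideal and hence kills an irreducible module, and $\la$ is recovered from the $\ghz$-action, giving uniqueness.

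The one genuine soft spot is in (1): you assert that $\gn$ ``acts locally nilpotently'' and then invoke super-Engel ``applied to the nilpotent Lie superalgebra $\gn$.'' Nilpotency of $\gn$ as a Lie superalgebra does not by itself imply that its elements act nilpotently on $\bold v$ (an abelian Lie algebra is nilpotent but can act by nonzero scalars), and Engel's theorem needs nilpotency of the operators, not of the algebra. This hypothesis is true here but requires two observations you omitted: (i) $\gn_{\bar 0}$ lies in the derived algebra of the solvable Lie algebra $(\gb_+)_{\bar 0}=\ghz\oplus\gn_{\bar 0}$ (the Borel of $\ggl_n$), so by Lie's theorem every element of $\gn_{\bar 0}$ acts nilpotently on the finite-dimensional space $\bold v$; and (ii) for an odd element $y\in\gn_{\bar 1}$ one has $y^2=\tfrac12[y,y]\in\gn_{\bar 0}$, because $[\gg_\al,\gg_\be]\subseteq\gg_{\al+\be}$ and $2\al$ is never a root of $\gq(n)$, so $y$ also acts nilpotently. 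With homogeneous elements acting nilpotently, super-Engel gives $\bold v^{\gn}\neq 0$, and your observation that $\bold v^{\gn}$ is a graded $\gb_+$-submodule (since $\gn$ is an ideal) finishes (1). Please add these two lines; the rest of the argument stands.
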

From the above proposition, we know that the dimension of the
highest weight space of a highest weight $\gg$-module with highest
weight $\la$ is the same as the dimension of an irreducible $\Cliff
(\la)$-module. On the other hand all irreducible $\Cliff(\la)$-modules have the
same dimension (see, for example, \cite[Table 2]{ABS}). Thus the dimension of the
highest weight space is constant for all highest weight modules with
highest weight $\la$.

\begin{definition} \label{Weyl module}
Let ${\bold v}(\la)$ be the irreducible $\gb_+$-module determined by
$\la$ up to $\Pi$. The {\it Weyl module} $W(\la)$ of $\gg$ with
highest weight $\la$ is defined to be
\begin{equation*}
W(\la):=U(\gg) \otimes _{U(\gb_+)} {\bold v(\la)}.
\end{equation*}
\end{definition}
\noindent Note that the structure of $W(\la)$ is determined by $\la$
up to $\Pi$.

\begin{remark}
One may define the {\it Verma module} corresponding to $\la$ by
$M(\la):=U(\gg) \otimes _{U(\gb_+)} {\Cliff (\la)}$. Since the Verma modules are not
highest weight modules, they will not be considered in this paper.
\end{remark}

We will denote by $\gdiw$ and $\qdiw$ the set of $\ggl_n$-{\it
dominant integral weights} and the set of $\gg$-{\it dominant
integral weights}, respectively. These are given by
\begin{align*}
&\gdiw := \{ \la_1\epsilon_1+\cdots+\la_n \epsilon_n\in
\gh_{\bar 0}^* ~|~ \la_i-\la_{i+1} \in \Z_{\geq 0} \  {\rm for \ all} \ i \in I \} \\
&\qdiw := \{ \la_1\epsilon_1+\cdots+\la_n \epsilon_n\in \gdiw ~|~
\la_i=\la_{i+1} \Rightarrow \la_i=\la_{i+1}=0 \ {\rm for \ all} \ i
\in I\}.
\end{align*}
\begin{comment}
A weight $\lambda=\la_1\epsilon_1+\cdots+\la_n \epsilon_n$ is
called {\it typical} if $\la_i+\la_j \neq 0$ for all $i,j$. The set
of typical dominant integral weights is denoted by $\Lambda^{++}$.
\end{comment}

% We have the following proposition for finite dimensional irreducible $\gg$-modules.

\begin{proposition} \cite{P} \label{f.d. irred g-mod}
\begin{enumerate}
\item[{\rm (1)}] For any weight $\la$, $W(\la)$ has a unique maximal submodule
$N(\la)$.

\item[{\rm (2)}] For each finite dimensional irreducible
$\gg$-module $V$, there exists a unique weight $\la \in \gdiw $ such
that $V$ is a homomorphic image of $W(\la)$.

\item[{\rm (3)}] $V(\la):=W(\la) / N(\la)$ is finite dimensional if and only if $\la \in
\qdiw$.
\begin{comment}
\item[{\rm (4)}] Let $W$ be the Weyl group of the Lie algebra
$\ggl(n)=\gg_{\bar 0}$, and let $s(w)=1$ (respectively, $-1$) if the
number of transpositions occurring in the element $w \in W$ is even
(respectively, odd).
    If $\la \in \Lambda^{++}$, then
 \beq
 \ch V = (1+\ep){\ds 2^{[(n-1)/2]}} \prod_{ \al \in \De_{\bar{1}}^+}
 \big(e^{ \al/2}+\ep e ^{-\al/2}\big) \Big\{\Big( \sum_{w \in W} s(w)e^{w(\la)}\Big)
 \Big/ \Big(\sum_{w \in W}s(w)e^{w(\rho_{0})}\Big)  \Big\}.
 \eeq
 \end{comment}
\end{enumerate}
\end{proposition}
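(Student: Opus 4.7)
My plan addresses the three parts of the proposition in sequence.

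For \textbf{(1)}, I would argue directly from the triangular decomposition \eqref{eq:tri decomp of U(q(n))}. Writing $W(\la) = U^- \bold v(\la)$, the weight-space decomposition gives $W(\la)_{\la} = \bold v(\la)$ and all other weight spaces strictly below $\la$. Any proper $\gg$-submodule $M \subsetneq W(\la)$ must intersect $\bold v(\la)$ trivially: otherwise $M \cap \bold v(\la)$ is a nonzero $\gb_+$-submodule of the irreducible $\bold v(\la)$, forcing $M \supseteq \bold v(\la)$ and hence $M = U^- \bold v(\la) = W(\la)$. Consequently $\sum_{M \subsetneq W(\la)} M$ has vanishing $\la$-weight component, remains proper, and is the unique maximal $N(\la)$.

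For \textbf{(2)}, given a finite-dimensional irreducible $V$, choose a weight $\la$ maximal in the finite set $\wt V$. Then $V_{\la}$ is stable under $\gb_+$ (since $e_i, e_{\bar i}$ would raise the weight beyond maximum) and finite-dimensional, so it contains an irreducible $\gb_+$-submodule; by the preceding Penkov proposition this submodule is isomorphic up to $\Pi$ to $\bold v(\la)$. The universal property of $W(\la)$ then yields a nonzero, hence surjective, homomorphism $W(\la) \twoheadrightarrow V$. Applying finite-dimensional $\gsl_2$-theory to each $\gsl_2^{(i)} := \langle e_i, f_i, h_i \rangle$ forces $\la(h_i) = \la_i - \la_{i+1} \in \Z_{\geq 0}$, so $\la \in \gdiw$. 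Uniqueness of $\la$ is automatic as it is the unique maximal weight of $V$.

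For \textbf{(3)}, the $\Rightarrow$ direction uses (2) to place $\la$ in $\gdiw$; for the extra $\qdiw$ condition at index $i$, I would restrict to the $\gq(2)$-subalgebra generated by $e_i, f_i, e_{\bar i}, f_{\bar i}, k_i, k_{i+1}, k_{\bar i}, k_{\overline{i+1}}$, reducing to the case $n=2$ with $\la_1 = \la_2 = c$. Take $v_0 \in \bold v(\la)$ nonzero. Finite-dimensional $\gsl_2$-theory gives $f_1 v_0 = 0$ (a weight-zero vector annihilated by $e_1$ must lie in the trivial $\gsl_2$-isotypic component), and the relation $[f_1, f_{\bar 1}] = 0$ then gives $f_1 (f_{\bar 1} v_0) = 0$. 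Thus $w := f_{\bar 1} v_0$ is an $\gsl_2$-lowest-weight vector of $h_1$-weight $-2$, so in any finite-dimensional $\gsl_2$-module one must have $e_1^2 w \neq 0$ whenever $w \neq 0$. However the relation $[e_1, f_{\bar 1}] = k_{\bar 1} - k_{\bar 2}$ gives $e_1 w = (k_{\bar 1} - k_{\bar 2}) v_0 \in \bold v(\la)$, which is killed by another $e_1$; hence $(k_{\bar 1} - k_{\bar 2}) v_0 = 0$. Since the anticommutator relation $[e_{\bar 1}, f_{\bar 1}] = k_1 + k_2$ yields $e_{\bar 1} f_{\bar 1} v_0 = 2c v_0$, so that $w \neq 0$ whenever $c \neq 0$, the identity $k_{\bar 1} = k_{\bar 2}$ on $\bold v(\la)$ combined with $\{k_{\bar 1}, k_{\bar 2}\} = 0$ and $k_{\bar i}^2 = k_i = c$ forces $c = 0$. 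For the $\Leftarrow$ direction, when $\la \in \qdiw$ I would invoke Sergeev's decomposition of $(\C^{n|n})^{\otimes r}$ (or the Penkov--Serganova construction) to exhibit a finite-dimensional irreducible $\gg$-module of highest weight $\la$; by (1) and (2) this module is necessarily $V(\la)$.

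The principal obstacle is the $\gq(2)$-calculation in (3)$\Rightarrow$, where one must coordinate finite-dimensional $\gsl_2$-theory on $V(\la)$ with the Clifford anticommutation relations on $\bold v(\la)$ while carefully tracking the super-commutator signs through the mixed-parity relations $[e_i, f_{\bar j}]$, $[k_{\bar l}, f_i]$, and $[f_i, f_{\bar j}]$. The $\Leftarrow$ direction's reliance on non-trivial external input (Sergeev duality or the Penkov--Serganova construction) to actually exhibit the finite-dimensional irreducible is the other delicate ingredient.
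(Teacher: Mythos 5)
The paper offers no proof of this proposition at all --- it is quoted verbatim from Penkov \cite{P} --- so there is no in-paper argument to compare against and I am judging your proposal on its own terms. Parts (1) and (2) are correct and standard: proper submodules are weight modules whose $\la$-component is a proper $\gb_+$-submodule of the irreducible ${\bold v}(\la)$, hence zero, and the extraction of a maximal weight together with $\gsl_2$-theory gives $\la\in\gdiw$ and uniqueness. Your $\gq(2)$ computation for (3)$\Rightarrow$ also checks out: with $\la(h_i)=0$ one gets $f_iv_0=0$, hence $f_i(f_{\bar i}v_0)=0$ since $[f_i,f_{\bar i}]=0$, so $w=f_{\bar i}v_0$ is an $\gsl_2$-lowest weight vector of weight $-2$ with $e_i^2w=e_i\bigl((k_{\bar i}-k_{\overline{i+1}})v_0\bigr)=0$, forcing $w=0$; then $0=e_{\bar i}w+f_{\bar i}e_{\bar i}v_0=(k_i+k_{i+1})v_0=2cv_0$ already gives $c=0$, so your additional detour through $k_{\bar i}=k_{\overline{i+1}}$ and the Clifford relations, while also valid, is not needed.

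The genuine gap is in (3)$\Leftarrow$. Sergeev's decomposition of $(\C^{n|n})^{\otimes r}$ realizes $V(\la)$ only for strict-partition (polynomial) weights, i.e.\ essentially $\la\in\qdiw\cap P_{\geq 0}$, whereas $\qdiw$ as defined in the paper also contains weights with negative or non-integral coordinates (only the differences $\la_i-\la_{i+1}$ must be non-negative integers, plus the vanishing condition on equal consecutive entries). So the external input you invoke does not cover the full claim, and falling back on ``the Penkov--Serganova construction'' amounts to citing the very theorem being proved. A self-contained argument is available from tools the paper itself sets up: show that in the irreducible quotient $V(\la)$ one has $f_i^{\la(h_i)+1}{\bold v}(\la)=0$ for every $i$ --- the vector $u=f_i^{\la(h_i)+1}v$ is annihilated by every $e_j$ and $e_{\bar j}$, the only delicate case being $e_{\bar i}u$, where for $\la(h_i)=0$ one needs precisely $\la_i=\la_{i+1}=0$ to kill $(k_{\bar i}-k_{\overline{i+1}})v$, and otherwise $e_{\bar i}u$ is itself a singular vector of weight strictly below $\la$ --- hence $u$ generates a proper submodule and vanishes; then Proposition \ref{f.d. h.w. g-mod} gives finite dimensionality. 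This is exactly the classical counterpart of the quantum argument the paper carries out in Proposition \ref{defining_rel_Vq(la)}.
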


Now we restrict our attention to the following subcategory of the category of
finite dimensional $\gg$-modules.
\begin{definition} \label{def_o}
 Set $P_{\geq 0}:=\{ \la=\lambda_1 \epsilon_1 +...+ \lambda_n
\epsilon_n \in P \ | \ \la_j \geq 0 \ \text{for all } j=1, \ldots,
n \}$. The {\it category ${\mathcal O}^{\geq 0}$} consists of
finite dimensional $U(\gg)$-modules $M$
with  weight space
decomposition $M=\bigoplus_{\mu \in P} M_{\mu }$
 satisfying $(i)$ $\wt(M)
\subset P_{\geq 0} $,
$(ii)$ $k_{\bar i}|_{M_{\mu}} = 0 $ for $\mu \in P_{\geq 0}$ and $i \in \{1, \ldots, n\}$
such that $\langle k_i, \mu \rangle =0$.
\end{definition}

\begin{remark} \label{new_remark}
The reason we impose condition $(ii)$ in Definition \ref{def_o} is that we want the category ${\mathcal O}^{\geq 0}$ and its $q$-analog ${\mathcal O}_q^{\geq 0}$ (see Definition \ref{def_o_q}) to be completely reducible. If $M$ is a weight module, then one can show that the ${\Cliff (\mu)}$-module $M_{\mu}$ is completely reducible if and only if $(ii)$ is satisfied. This follows from the complete reducibility criterion for ${\Cliff (\mu)}$-modules. For the $q$-version of this criterion see Lemma \ref{lambda_zero} and  Corollary \ref{cliff_mod}. In view of the above, one can verify that Definition \ref{def_o} is equivalent to the one where $(ii)$ is replaced by: $(ii)'$ $M$ is completely reducible as an ${\mathfrak h}$-module. The same applies to Definition \ref{def_o_q} where $(ii)$ can be replaced by the condition that $M$ is a completely reducible $U_q^0$-module.
\end{remark}

 One easily checks that ${\mathcal O}^{\geq 0}$ is closed under finite direct sum, tensor
product and taking submodules and quotient modules. As Sergeev showed, \cite{Se2}, for any $\lambda \in \qdiw \cap P_{\geq 0}$,  $V(\lambda)$ is a submodule of $V^{\otimes |\lambda|}$ where $V$ is the natural representation of $\gg$ and $|(\lambda_1,...,\lambda_n)| = \lambda_1 +...+ \lambda_n$.  This, together with the properties of the $\ggl_n$-module $V(\lambda)$ (see, for example, \cite[Theorem 7.2.3]{HK2002}), implies the following proposition.

\begin{proposition} \label{prop_o}
For each $\lambda \in \qdiw \cap P_{\geq 0}$, $V(\lambda)$ is an
irreducible $U(\gg)$-module in the category ${\mathcal
O}^{\geq 0}$. Conversely,
every irreducible $U(\gg)$-module in the category ${\mathcal
O}^{\geq 0}$ has the form $V(\lambda)$ for some $\lambda \in \qdiw
\cap P_{\geq 0}$.
\end{proposition}

In \cite{Se1}, Sergeev has presented an explicit set of generators of
$Z=\mathcal{Z}(U(\gg))$, the center of $U(\gg)$, and showed that
each Weyl module $W(\lambda)$ $(\lambda \in \gh_{\bar 0}^*)$ admits a
central character. Let $\chi_{\lambda} \in \Hom_{\C} (Z, \C)$ be the central character
afforded by $W(\lambda)$; i.e., every element $z \in Z$ acts on
$W(\lambda)$ as scalar multiplication by $\chi_{\lambda}(z)$. Following
\cite[(2.12)]{B}, to each weight $\la=\la_1\epsilon_1+\cdots+\la_n
\epsilon_n \in P$, one can assign a formal symbol
$$ \delta (\lambda) := \delta_{\lambda_1} + \cdots + \delta_{\lambda_n}$$
such that $\delta_0=0$ and $\delta_{-i}=-\delta_{i}$.

\begin{proposition}\cite[Theorem 4.19]{B}, \cite[Proposition 1.1]{PS2} \label{central character}
For $\lambda, \mu \in P$, $\chi_{\lambda}=\chi_{\mu}$ if and only if
$\delta (\lambda)=\delta (\mu)$.
\end{proposition}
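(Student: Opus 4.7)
The plan is to identify central characters explicitly via a Harish-Chandra projection, and then apply Sergeev's description of its image to reduce the equality $\chi_\lambda = \chi_\mu$ to a polynomial identity in the coordinates of $\lambda$ and $\mu$.

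First I would define $\phi : Z \to S(\gh_{\bar 0}) = \C[x_1,\ldots,x_n]$ as follows. Using the PBW decomposition \eqref{eq:tri decomp of U(q(n))}, write each $z \in Z$ in the form $z = \widetilde\phi(z) + z'$ with $\widetilde\phi(z) \in U(\gh)$ and $z'$ a sum of PBW monomials containing at least one factor in $U^+_{>0}$ or $U^-_{<0}$. A standard argument (using that $U^+_{>0}$ annihilates the highest weight space and that $z$ has weight zero) shows $z \cdot (1 \otimes v) = 1 \otimes \widetilde\phi(z)\,v$ for every $v \in \bold v(\lambda) \subset W(\lambda)$. Since $z$ is central it commutes with all of $\gh_{\bar 1}$, which forces $\widetilde\phi(z)$ to act on $\bold v(\lambda)$ as a $\Cliff(\lambda)$-linear endomorphism; by Schur's lemma this is multiplication by a scalar $\phi(z)(\lambda) \in \C$. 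Hence $\chi_\lambda(z) = \phi(z)(\lambda)$, and $\chi_\lambda = \chi_\mu$ if and only if $f(\lambda) = f(\mu)$ for every $f \in \phi(Z)$.

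The key input is then Sergeev's theorem \cite{Se1}: the image $\phi(Z) \subset \C[x_1,\ldots,x_n]$ coincides with the subring $\Gamma$ of symmetric polynomials $f$ such that $f(t,-t,x_3,\ldots,x_n)$ is independent of $t$, and $\Gamma$ is generated as a $\C$-algebra by the odd power sums $p_{2r-1}(x) = x_1^{2r-1} + \cdots + x_n^{2r-1}$ for $r \geq 1$. Setting $a_m(\lambda) := \#\{i \mid \lambda_i = m\}$, a direct computation gives
\begin{equation*}
p_{2r-1}(\lambda) = \sum_{m \geq 1} m^{2r-1}\bigl(a_m(\lambda) - a_{-m}(\lambda)\bigr), \qquad r \geq 1.
\end{equation*}
The integer sequence $\bigl(a_m(\lambda) - a_{-m}(\lambda)\bigr)_{m \geq 1}$ has finite support, and the odd exponents $1,3,5,\ldots$ give rise to a nondegenerate Vandermonde system, so the values $\bigl(p_{2r-1}(\lambda)\bigr)_{r \geq 1}$ determine this sequence. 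By the conventions $\delta_0 = 0$ and $\delta_{-m} = -\delta_m$ this is precisely the coefficient sequence of $\delta(\lambda) = \sum_{m \geq 1}\bigl(a_m(\lambda) - a_{-m}(\lambda)\bigr)\delta_m$, whence $\chi_\lambda = \chi_\mu$ if and only if $\delta(\lambda) = \delta(\mu)$.

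The main obstacle is the first step. Because $\gh$ is nonabelian, $\widetilde\phi$ lands in the Clifford-like algebra $U(\gh)$ rather than in the commutative ring $S(\gh_{\bar 0})$, so extracting the scalar $\phi(z)(\lambda)$ requires both the irreducibility of $\Cliff(\lambda)$-modules and Sergeev's nontrivial identification of the image as $\Gamma$; once these are in hand, the remaining deduction is the elementary Vandermonde computation above.
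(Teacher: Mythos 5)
The paper does not actually prove this proposition: it is quoted directly from \cite{B} (Theorem 4.19) and \cite{PS2} (Proposition 1.1). Your argument is a correct reconstruction of the proof underlying those citations, and it is the standard one: the Harish--Chandra projection for $\gq(n)$ (which, consistently with the unshifted criterion $\delta(\lambda)=\delta(\mu)$, involves no $\rho$-shift), Sergeev's identification \cite{Se1} of its image with the ring generated by the odd power sums, and the Vandermonde argument recovering the signed multiplicities $a_m(\lambda)-a_{-m}(\lambda)$, which are precisely the coefficients of $\delta(\lambda)$. Two small points are worth making explicit. First, the two directions of the equivalence use the two inclusions of Sergeev's theorem separately: $\chi_\lambda=\chi_\mu\Rightarrow\delta(\lambda)=\delta(\mu)$ only needs that each $p_{2r-1}$ is realized by an explicit central element, i.e.\ $p_{2r-1}\in\phi(Z)$, while the converse needs $\phi(Z)\subseteq\C[p_1,p_3,\dots]$. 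Second, your handling of the noncommutative Cartan part is sound and can be tightened as follows: since $z$ has weight zero, any PBW monomial of $z$ lying outside $U(\gh)$ must carry a nontrivial $U^+$-factor on the right and therefore annihilates $1\otimes \mathbf{v}(\lambda)$; the resulting endomorphism of $\mathbf{v}(\lambda)$ commutes with the $\Cliff(\lambda)$-action (compare $z\,u\,(1\otimes v)=u\,z\,(1\otimes v)$ for $u\in\gh_{\bar 1}$) and is even, hence is a scalar by the super Schur lemma even in the ``queer'' case where the full endomorphism superalgebra of $\mathbf{v}(\lambda)$ is $\C\oplus\C\varepsilon$ with $\varepsilon$ odd.
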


The following proposition will be very useful in Section \ref{section_limit}.

\begin{proposition}\label{f.d. h.w. module in O is irred.}
Let $V$ be a finite dimensional highest weight module over $\gg$
with highest weight $\lambda \in \Lambda^+ \cap P_{\geq 0}$. Then
$V$ is isomorphic to an irreducible highest weight module
$V(\lambda)$.
\end{proposition}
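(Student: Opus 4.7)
The plan is to show that $V$ has exactly one composition factor, namely $V(\lambda)$, by combining central character information with the constraint that $\lambda$ lies in $P_{\geq 0}$. Since $V$ is a finite dimensional highest weight module with highest weight $\lambda \in \Lambda^+$, Proposition \ref{f.d. irred g-mod}(2) gives a surjection $W(\lambda) \twoheadrightarrow V$, and every composition factor of $V$ has the form $V(\mu)$ for some $\mu \in \Lambda^+$ with $\mu \leq \lambda$. I will argue that each such $\mu$ actually equals $\lambda$, and then deduce that $V$ is irreducible by counting the dimension of the highest weight space.

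First I would verify that every composition factor weight $\mu$ lies in $\Lambda^+\cap P_{\geq 0}$. Writing $\lambda-\mu=\sum_{i=1}^{n-1}c_i\alpha_i$ with $c_i\in\Z_{\geq 0}$ and $\alpha_i=\epsilon_i-\epsilon_{i+1}$, one gets $\mu_j=\lambda_j-c_j+c_{j-1}$ (with $c_0=c_n=0$); then $\mu_n=\lambda_n+c_{n-1}\geq 0$, and the $\ggl_n$-dominance $\mu_{j}\geq \mu_{j+1}$ propagates nonnegativity upward, so $\mu\in P_{\geq 0}$. Next, because $V$ is a quotient of $W(\lambda)$, the center $Z$ acts on $V$ (and hence on each composition factor) by the character $\chi_\lambda$; applied to $V(\mu)$ this gives $\chi_\mu=\chi_\lambda$, so Proposition \ref{central character} yields $\delta(\mu)=\delta(\lambda)$.

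Now I would exploit the $\Lambda^+\cap P_{\geq 0}$ condition to turn the identity $\delta(\mu)=\delta(\lambda)$ into the equality $\mu=\lambda$. Since $\lambda,\mu\in\Lambda^+\cap P_{\geq 0}$, the nonzero coordinates of each are \emph{strictly} decreasing positive integers, and the formal symbols $\delta(\lambda)$ and $\delta(\mu)$ reduce to sums of distinct $\delta_a$ with $a>0$ (the zero coordinates contribute $\delta_0=0$). Equality of these two formal sums forces the multisets of positive coordinates to coincide, hence, after arranging in decreasing order and padding with zeros on the right, $\mu=\lambda$.

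Finally I would count dimensions at weight $\lambda$. Since $V$ is a highest weight module generated by an irreducible finite dimensional $\gb_+$-submodule $\mathbf{v}\cong\mathbf{v}(\lambda)$, we have $V=U(\gn^-)\mathbf{v}$, and weight considerations give $V_\lambda=\mathbf{v}$, so $\dim V_\lambda=\dim\mathbf{v}(\lambda)$. On the other hand, every composition factor of $V$ is $V(\lambda)$ (up to $\Pi$), so a composition series of length $\ell$ gives $\dim V_\lambda=\ell\cdot\dim V(\lambda)_\lambda=\ell\cdot\dim\mathbf{v}(\lambda)$. Hence $\ell=1$, meaning $V$ is irreducible; being an irreducible quotient of $W(\lambda)$ forces $V\cong W(\lambda)/N(\lambda)=V(\lambda)$. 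The main subtlety is the central-character step: one must be sure that the constraints $\lambda\in\Lambda^+\cap P_{\geq 0}$ and $\mu\leq\lambda$ together prevent any cancellation in $\delta(\mu)=\delta(\lambda)$ of the form $\delta_i+\delta_{-i}=0$; the preliminary check that $\mu\in P_{\geq 0}$ is exactly what rules this out.
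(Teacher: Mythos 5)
Your proposal is correct and uses the same key mechanism as the paper: every composition factor $V(\mu)$ satisfies $\chi_\mu=\chi_\lambda$, hence $\delta(\mu)=\delta(\lambda)$, and the constraint $\lambda,\mu\in\Lambda^+\cap P_{\geq 0}$ (with your correct preliminary check that $\mu\leq\lambda$ forces $\mu\in P_{\geq 0}$) rules out cancellation and gives $\mu=\lambda$. The only cosmetic difference is in the wrap-up — the paper argues by contradiction with a proper irreducible submodule of strictly smaller highest weight, while you count composition factors via $\dim V_\lambda$ — but both are routine once the central-character step is in place.
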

\begin{proof}
If $V$ is reducible, since it is finite dimensional, it contains a
nonzero proper irreducible submodule $W$. Then $W$ is isomorphic to
an irreducible highest weight module $V(\mu)$ for some weight $\mu
\in \Lambda^+ \cap P_{\geq 0}$ by Proposition \ref{f.d. irred
g-mod}. We know that $\mu \lvertneqq \lambda$ and $\chi_{\lambda} =
\chi_{\mu}$. But, by Proposition \ref{central character}, $\delta(\lambda)=\delta (\mu)$.
Since $\lambda,\mu \in \Lambda^+ \cap P_{\geq 0}$, we have $\lambda=\mu$, which is a contradiction.
%It contradicts to $\mu \lvertneqq \lambda$.
Thus $V$ is irreducible and by Proposition \ref{f.d. irred g-mod},
it must be isomorphic to the irreducible highest weight module
$V(\lambda)$ up to $\Pi$.
\end{proof}

The next proposition gives a sufficient condition for the finite
dimensionality of a highest weight $\gg$-module.

\begin{proposition} \label{f.d. h.w. g-mod}
Let $V$ be a highest weight module over $\gg$ with highest weight
$\lambda \in \Lambda^{+}$. If $f_i^{\lambda(h_i)+1} v = 0$ for all
$v \in V_{\lambda}$ and $i \in I$, then $V$ is finite dimensional.
\end{proposition}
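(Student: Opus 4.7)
The plan is to reduce the statement to the classical finite-dimensionality criterion for $\ggl_n$-modules applied to the even subalgebra $\gg_{\bar 0}\cong\ggl_n$, and then extend to all of $\gg$ using the super Poincar\'e--Birkhoff--Witt theorem.

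Since $V$ is a highest weight module, $V = U(\gg)\cdot V_\lambda$, and $V_\lambda$ is finite-dimensional, being an irreducible $\Cliff(\lambda)$-module by the earlier discussion. The first step is to show that the $\gg_{\bar 0}$-submodule $W := U(\gg_{\bar 0})\cdot V_\lambda$ is finite-dimensional. For each $v\in V_\lambda$, the vector $v$ is a $\gg_{\bar 0}$-highest weight vector of weight $\lambda|_{\gh_{\bar 0}}$: the positive nilpotent part $\gn$ annihilates $V_\lambda$ (so in particular $e_i v = 0$), the subalgebra $\gh_{\bar 0}$ acts by the scalar $\lambda(h)$, and by hypothesis $f_i^{\lambda(h_i)+1}v=0$ for all $i\in I$. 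Since $\lambda\in\Lambda^+$ makes $\lambda|_{\gh_{\bar 0}}$ dominant integral for $\ggl_n$, the classical finite-dimensionality criterion gives that $U(\gg_{\bar 0})\cdot v$ is the finite-dimensional irreducible $\ggl_n$-module of highest weight $\lambda|_{\gh_{\bar 0}}$. Summing over a basis of the finite-dimensional space $V_\lambda$, we conclude that $W$ is finite-dimensional.

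For the second step, I would invoke the super PBW theorem. Fix an ordered basis $\xi_1,\dots,\xi_N$ of the finite-dimensional space $\gg_{\bar 1}$ and place these odd generators before the even ones in the PBW ordering. Then $U(\gg)$ has a basis of monomials $\xi_{j_1}\cdots\xi_{j_k}\,u$ with $j_1<\cdots<j_k$ and $u$ ranging over a PBW basis of $U(\gg_{\bar 0})$. Writing $\Lambda \subseteq U(\gg)$ for the $2^N$-dimensional linear span of the ordered odd monomials, one has $U(\gg) = \Lambda\cdot U(\gg_{\bar 0})$ as vector spaces, and therefore
\[
V \;=\; U(\gg)\cdot V_\lambda \;=\; \Lambda\cdot U(\gg_{\bar 0})\cdot V_\lambda \;=\; \Lambda\cdot W,
\]
which is finite-dimensional as a product of two finite-dimensional subspaces.

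The main obstacle is establishing the finite-dimensionality of $W$. The key subtlety is that the hypothesis $f_i^{\lambda(h_i)+1}v=0$ is required uniformly over all of $V_\lambda$, not just a single distinguished highest weight vector, which is precisely what is needed to apply the classical $\ggl_n$ criterion to every vector of $V_\lambda$ simultaneously. Once $W$ is bounded, the odd directions package neatly into the $2^N$-dimensional exterior factor $\Lambda$ and the conclusion follows immediately.
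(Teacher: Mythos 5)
Your proposal is correct and follows essentially the same route as the paper's proof: reduce to the finite-dimensionality of $U(\gg_{\bar 0})V_\lambda$ via the classical criterion for $\ggl_n$ applied to each vector of the (finite-dimensional) highest weight space, then absorb the odd directions through the PBW basis with the finitely many ordered odd monomials placed on the left.
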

\begin{proof}
Let $\{x_1, x_2, \ldots, x_r\}$ and $\{y_1, y_2, \ldots, y_r\}$ be
bases of $\gg_{\bar 0}$ and $\gg_{\bar 1}$, respectively. Then by
the Poincar\'e-Birkhoff-Witt theorem, $U(\gg)$ has a basis
consisting of elements of the form $y_{1}^{\epsilon_1}
y_{2}^{\epsilon_2} \cdots y_{r}^{\epsilon_r} x_{1}^{n_1} x_{2}^{n_2}
\cdots x_{r}^{n_r}$ where $\epsilon_j = 0$ or $1$ and $n_j \in \N
\cup \{0\}$. Because $\{y_{1}^{\epsilon_1} y_{2}^{\epsilon_2} \cdots
y_{r}^{\epsilon_r} ~|~ \epsilon_j = 0, 1 \}$ is a finite set, it is
enough to show that $U(\gg_{\bar 0})V_{\lambda}$ is finite
dimensional. For any $v \in V_{\lambda}$, we know that $U(\gg_{\bar
0})v$ is a highest weight module over $\gg_{\bar 0}$ with highest
weight $\lambda$ satisfying $f_i^{\lambda(h_i)+1} v = 0$ for all
$i\in I$. Thus it is finite dimensional. Since $U(\gg_{\bar
0})V_{\lambda} \subset \displaystyle \sum_{v \in V_{\lambda}}
U(\gg_{\bar 0})v$, we have the desired result.
\end{proof}

We say that a weight $\la = \la_1\epsilon_1+\cdots+\la_n
\epsilon_n\in \gh_{\bar 0}^*$ is {\it $\alpha$-typical} if
$\alpha=\epsilon_i-\epsilon_j$ and $\la_i + \la_j \neq 0$. In
\cite{Se2}, Sergeev proved the following character formula for $V(\la)~
(\la \in \Lambda^+ \cap P_{\geq 0})$:
\begin{equation} \label{character_formula}
\ch V(\la)= \dfrac{\dim {\bold v_{\la}}}{D}  \sum_{w \in W}
{\rm sgn}~ w \ w \Big( e^{\lambda+\rho_0}
\prod_{\substack{\alpha \in \Delta_{{\bar 0}} ^+,\\ \la  \ {\rm is}
\ \alpha-{\rm tyipical}}} (1+e^{-\alpha})\Big),
\end{equation}
where ${\bold v_{\la}}$ is an irreducible $\Cliff(\la)$-module, $W$
is the Weyl group of $\gg_{{\bar 0}}=\ggl_n$, $\rho_0 =\frac{1}{2}
 \sum_{\alpha \in \Delta^+_{{\bar 0}}} \al$ and $D=\sum_{w \in
W} {\rm sgn} ~ w \ e^{w(\rho_0)}$ is the Weyl denominator. In
\cite{PS2}, the formula (\ref{character_formula}) is called the {\it
generic character formula} and an explicit algorithm for computing the character of an arbitrary finite
dimensional irreducible $\gg$-module is presented.
\vskip 5mm

%%%%%%%%%%%%%%%%%%%%%%%%%%%%%%%%%%%%%%%%%%%%%%%%%%%%%%%%%%%%%%%%%%%%%%%%%%%%%%%%%%%%%%%%%%%%%%%%%%%%%%%%%%%%%%%%%%%%%%

\section{The quantum superalgebra $U_{q}(\gq(n))$} \label{section_quantum_q}

In \cite{O}, Olshanski constructed the quantum deformation
$U_q(\gq(n))$ of the universal enveloping algebra of $\gq(n)$. The
quantum superalgebra $U_q(\gq(n))$ is defined to be the
associative algebra over $\C(q)$ generated by $L_{ij}, \ i \leq j,$
with defining relations
  \begin{equation} \label{defining_rel_Uqqn_in_Olshanski}
  \begin{aligned}
  & L_{ii}L_{-i,-i}=L_{-i,-i}L_{ii}=1,\\
  & (-1)^{p(i,j)p(k,l)}q^{\varphi(j,l)}L_{ij}L_{kl} + \{ k \leq j < l \} \theta(i,j,k)(q-q^{-1})L_{il}L_{kj}    \\
   & \hskip 12mm  + \{ i \leq -l < j \leq -k \} \theta(-i,-j,k)(q-q^{-1})L_{i,-l}L_{k,-j} \\
   & = q^{\varphi(i,k)} L_{kl} L_{ij} + \{ k < i \leq l \} \theta(i,j,k)(q-q^{-1})L_{il}L_{kj}  \\
   & \hskip 12mm + \{-l \leq i < -k \leq j \} \theta(-i,-j,k)(q-q^{-1})L_{-i,l}L_{-k,j} ~ ,
  \end{aligned}
 \end{equation}
 where $\varphi(i, j) = \delta_{|i|,|j|} {\rm sgn}(j)$,
 $~\theta(i,j,k)={\rm sgn}({\rm sgn}(i)+{\rm sgn}(j)+{\rm
sgn}(k))$, $p(i,j)=\begin{cases} 0 \ \ \text{if} \ \ ij >0  \\
                             1 \ \  \text{if} \ \ ij<0 ,
 \end{cases}$  for any indices $i \leq
j,~k \leq l$ in $\{ \pm 1,~ \cdots ~ \pm n \}$ and the symbol $\{ \cdot \cdot \cdot \}$ (the dots
stand for some inequalities) is equal to 1 if all of these
inequalities are fulfilled and 0 otherwise.

Following \cite[Remark 7.3]{O}, we consider the set of generators of $U_{q}(\gg)=U_q(\gq(n))$
as follows:
\begin{equation} \label{generators_of_olshanski}
\begin{aligned}
& q^{k_i}:=L_{i,i}, \  \ q^{-k_i}:=L_{-i,-i} , \ \ e_{i} := -
\frac{1}{q-q^{-1}}L_{-i-1, -i}, \ \ f_{i} :=
\frac{1}{q-q^{-1}}L_{i, i+1}, \\
& e_{\bar{i}} := - \frac{1}{q-q^{-1}}L_{-i-1, i}, \ \ f_{\bar{i}} :=
- \frac{1}{q-q^{-1}}L_{-i, i+1}, \ \ k_{\bar{i}} := -
\frac{1}{q-q^{-1}}L_{-i, i}.
\end{aligned}
\end{equation}

Our first main result is the following presentation of $U_q(\gg)$.
\begin{theorem} \label{defining relations of Uqqn}
The {\it quantum superalgebra} $U_{q}(\gg)$ is isomorphic to the unital
associative algebra over $\C (q)$ generated by the
elements $e_i,f_i,e_{\bar{i}}, f_{\bar{i}}$ $(i = 1,...,n-1)$,
$k_{\bar{l}}$ $(l = 1,...,n)$, and $q^h$ $(h \in P^{\vee})$,
satisfying the following relations
\begin{align}  \label{defining relations of our Uq(q(n))}
\allowdisplaybreaks
&q^0=1, q^{h_1 + h_2}=q^{h_1}q^{h_2} \mbox{ for } h_1,h_2 \in
P^{\vee}, \nonumber \\
\nonumber &q^h e_i q^{-h}= q^{\alpha_i(h)}e_i, q^h f_i q^{-h}=
q^{-\alpha_i(h)}f_i \mbox{ for } h \in P^{\vee}\\
\nonumber &q^h k_{\bar{i}} q^{-h} = k_{\bar{i}}, q^h e_{\bar{i}}q^{-h} =
q^{\alpha_i(h)}e_{\bar{i}}, q^h f_{\bar{i}}q^{-h} =
q^{-\alpha_i(h)}f_{\bar{i}} \mbox{ for } h \in P^{\vee}\\
\nonumber &e_if_i - f_i e_i = \frac{1}{q-q^{-1}}\left( q^{k_{i} - k_{i+1}} -
q^{-k_i+k_{i+1}} \right),\\
\nonumber & qe_{i+1}f_i - f_i e_{i+1} = e_i f_{i+1} - q f_{i+1}e_i = e_if_j - f_je_i = 0 \; \mbox{ if } |i-j|>1,\\
\nonumber & e_if_{\bar{i}} - f_{\bar{i}} e_i = q^{-k_{i+1}}k_{\bar{i}} - k_{\overline{i+1}}q^{-k_{i}},\\
\nonumber & qe_{i+1}f_{\bar{i}} - f_{\bar{i}} e_{i+1} = e_i f_{\overline{i+1}} - q f_{\overline{i+1}}e_i = e_if_{\bar{j}} - f_{\bar{j}}e_i = 0 \; \mbox{ if } |i-j|>1,\\
\nonumber &e_{\bar{i}}f_{i} - f_{i} e_{\bar{i}} = q^{k_{i+1}}k_{\bar{i}} - k_{\overline{i+1}}q^{k_{i}},\\
\nonumber &qe_{\overline{i+1}}f_{i} - f_{i} e_{\overline{i+1}} = e_{\bar{i}}
f_{i+1} - q f_{i+1}e_{\bar{i}} = e_{\bar{i}}f_{j} - f_{j}e_{\bar{i}} = 0 \; \mbox{ if } |i-j|>1,\\
\nonumber &k_{\bar{i}}e_i - q e_i k_{\bar{i}}= e_{\bar{i}}q^{-k_i}, \;
qk_{\bar{i}}e_{i-1} - e_{i-1}k_{\bar{i}}= - q^{-k_i}
e_{\overline{i-1}},\\
\nonumber &k_{\bar{i}}e_j - e_j k_{\bar{i}} = 0 \; \mbox{ for } j\neq i \mbox{ and } j \neq i-1,\\
\nonumber &k_{\bar{i}}f_i - q f_i k_{\bar{i}}= - f_{\bar{i}}q^{k_i}, \;
qk_{\bar{i}}f_{i-1} - f_{i-1}k_{\bar{i}}=  q^{k_i}
f_{\overline{i-1}},\\
\nonumber &k_{\bar{i}}f_j - f_j k_{\bar{i}}=0 \; \mbox{ for } j\neq i \mbox{
and
} j \neq  i-1, \\
&k_{\bar{i}}^2 = \frac{q^{2k_i} - q^{-2k_i}}{q^2-q^{-2}}, \;
k_{\bar{i}}k_{\bar{j}} = -k_{\bar{j}}k_{\bar{i}} \mbox{ for }
i\neq j,\\
\nonumber &e_{\bar{i}}f_{\bar{i}}+ f_{\bar{i}}e_{\bar{i}} = \frac{q^{k_i+k_{i+1}} - q^{-k_i-k_{i+1}}}{q-q^{-1}} +(q-q^{-1})k_{\bar{i}}k_{\overline{i+1}},\\
\nonumber &q e_{\overline{i+1}}f_{\bar{i}}+ f_{\bar{i}}e_{\overline{i+1}} =
e_{\bar{i}}f_{\overline{i+1}}+ q
f_{\overline{i+1}}e_{\bar{i}} = e_{\bar{i}}f_{\bar{j}}+ f_{\bar{j}}e_{\bar{i}} = 0 \; \mbox{ if } |i-j|>1,\\
\nonumber &k_{\bar{i}}e_{\bar{i}} + q e_{\bar{i}} k_{\bar{i}}=
e_{i}q^{-k_i}, \; qk_{\bar{i}}e_{\overline{i-1}} +
e_{\overline{i-1}}k_{\bar{i}}=
q^{-k_i} e_{i-1},\\
\nonumber &k_{\bar{i}}e_{\bar{j}} + e_{\bar{j}} k_{\bar{i}}=0 \; \mbox{ for
}
j\neq i \mbox{ and } j \neq i-1,\\
\nonumber &k_{\bar{i}}f_{\bar{i}} + q f_{\bar{i}} k_{\bar{i}}= f_{i}q^{k_i},
\; qk_{\bar{i}}f_{\overline{i-1}} + f_{\overline{i-1}}k_{\bar{i}}=
q^{k_i} f_{i-1},\\
\nonumber &k_{\bar{i}}f_{\bar{j}} + f_{\bar{j}} k_{\bar{i}}=0 \; \mbox{ for
}
j\neq i \mbox{ and } j \neq i-1,\\
\nonumber &e_{\bar{i}}^2 = - \frac{q-q^{-1}}{q+q^{-1}}e_i^2, \;
f_{\bar{i}}^2
= \frac{q-q^{-1}}{q+q^{-1}}f_i^2,\\
\nonumber &e_ie_j - e_je_i = f_i f_j - f_j f_i = e_{\bar{i}}e_{\bar{j}} +
e_{\bar{j}} e_{\bar{i}}  =
f_{\bar{i}}f_{\bar{j}} + f_{\bar{j}} f_{\bar{i}} = 0 \mbox{ if } |i-j| > 1,\\
\nonumber &e_{i}e_{\bar{j}} - e_{\bar{j}}e_{i}=f_{i}f_{\bar{j}} - f_{\bar{j}}f_{i}=0 \mbox{ if } |i-j| \neq 1,\\
\nonumber &e_i e_{i+1} - e_{i+1}e_i = e_{\bar{i}}e_{\overline{i+1}}+
e_{\overline{i+1}}e_{\bar{i}}, \;f_{i+1}f_i - f_i f_{i+1} =
f_{\bar{i}}f_{\overline{i+1}}+ f_{\overline{i+1}}f_{\bar{i}},\\
\nonumber &e_i e_{\overline{i+1}}- e_{\overline{i+1}}e_i  =
e_{\bar{i}}e_{i+1}-  e_{i+1}e_{\bar{i}}, \; f_{\overline{i+1}}f_i
- f_if_{\overline{i+1}} = f_{i+1}f_{\bar{i}} - f_{\bar{i}}f_{i+1}, \\
\nonumber & qe_i^2 e_{i+1} - (q+q^{-1})e_ie_{i+1}e_i + q^{-1}e_{i+1}e_i^2=0, \\
\nonumber & qf_i^2 f_{i+1} - (q+q^{-1})f_if_{i+1}f_i + q^{-1}f_{i+1}f_i^2=0,\\
\nonumber & qe_{i}e_{i+1}^2 - (q+q^{-1})e_{i+1}e_{i}e_{i+1} + q^{-1}e_{i+1}^2 e_{i}=0,  \\
\nonumber & qf_{i}f_{i+1}^2 - (q+q^{-1})f_{i+1}f_{i}f_{i+1} + q^{-1}f_{i+1}^2 f_{i}=0, \\
\nonumber & qe_i^2 e_{\overline{i+1}} - (q+q^{-1})e_ie_{\overline{i+1}}e_i +
q^{-1}e_{\overline{i+1}}e_i^2=0,  \\
\nonumber & qf_i^2 f_{\overline{i+1}} - (q+q^{-1})f_if_{\overline{i+1}}f_i + q^{-1}f_{\overline{i+1}}f_i^2=0, \\
\nonumber & qe_{\overline{i}}e_{i+1}^2 -  (q+q^{-1})e_{i+1} e_{\overline{i}}e_{i+1}
+ q^{-1}e_{i+1}^2 e_{\overline{i}}= 0, \\
\nonumber & qf_{\overline{i}}f_{i+1}^2 - (q+q^{-1})f_{i+1} f_{\overline{i}}f_{i+1} +
q^{-1}f_{i+1}^2 f_{\overline{i}}=0.
\end{align}
\end{theorem}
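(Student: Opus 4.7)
The plan is to establish the isomorphism by the standard three-step Drinfeld--Jimbo/RTT template. \textbf{Step 1: construct a homomorphism.} Let $\tilde U$ denote the abstract unital associative $\C(q)$-algebra defined by the stated presentation. I would define a map $\phi:\tilde U\to U_q(\gq(n))$ by sending the Chevalley-type generators to the elements of Olshanski's algebra specified in \eqref{generators_of_olshanski}, then verify that every relation of $\tilde U$ holds among these elements. Each such identity is obtained by specializing the indices $(i,j,k,l)$ in \eqref{defining_rel_Uqqn_in_Olshanski} to particular values in $\{\pm 1,\dots,\pm n\}$ and simplifying the sign data $\varphi$, $\theta$, $p$ together with the indicator $\{\,\cdots\,\}$. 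For example, the toral relations arise from choosing all four indices in $\{\pm i, \pm(i+1)\}$ with both indicator brackets vanishing; the $e_if_j$-commutators and their bared analogues fall out of a single mixed-sign specialization of \eqref{defining_rel_Uqqn_in_Olshanski}; and the quadratic identities $k_{\bar i}^{\,2}=\tfrac{q^{2k_i}-q^{-2k_i}}{q^2-q^{-2}}$ and $e_{\bar i}^{\,2}=-\tfrac{q-q^{-1}}{q+q^{-1}}e_i^{\,2}$ come from setting $(i,j)=(k,l)$ equal to a mixed-sign pair.

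\textbf{Steps 2 and 3: surjectivity and injectivity.} The formulas \eqref{generators_of_olshanski} show directly that each of $L_{\pm i,\pm i}$, $L_{i,i+1}$, $L_{-i-1,-i}$, $L_{-i-1,i}$, $L_{-i,i+1}$ and $L_{-i,i}$ lies in the image of $\phi$; every remaining $L_{ij}$ can then be built inductively as an iterated $q$-commutator of these simple $L$-operators via \eqref{defining_rel_Uqqn_in_Olshanski} itself, giving surjectivity. For injectivity, the idea is to combine the triangular decomposition of $\tilde U$ (the content of the next subsection, paralleling \eqref{eq:tri decomp of U(q(n))}) with the flatness of Olshanski's deformation proved in \cite{O}. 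A PBW-type spanning set of $\tilde U$ produced from the triangular decomposition maps to a spanning set of $U_q(\gq(n))$ whose $q=1$ specialization is the classical PBW basis of $U(\gq(n))$ (via the \cite{LS} presentation as modified in the preceding Remark); a weight-space-by-weight-space rank count then forces $\phi$ to be injective.

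\textbf{Main obstacle.} The substantive work is concentrated in Step 1: the Olshanski relation \eqref{defining_rel_Uqqn_in_Olshanski} carries many case distinctions in the sign data and the indicator brackets, so each of the roughly thirty relations of $\tilde U$ has to be extracted by a careful specialization. The quantum Serre and super-Serre relations will be the hardest, since they require combining \eqref{defining_rel_Uqqn_in_Olshanski} for several choices of $(i,j,k,l)$ in order to cancel intermediate products of $L$-operators; and one must be especially attentive to the modified relations \eqref{new relation in LS} replacing \eqref{old relation in LS}, because the corresponding quantum identities take a genuinely new form and do not follow from a single instance of \eqref{defining_rel_Uqqn_in_Olshanski}.
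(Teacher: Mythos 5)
Your Steps 1 and 2 coincide with the paper's: the homomorphism $\phi$ is obtained by checking the new relations among the elements \eqref{generators_of_olshanski}, and surjectivity follows from the inductive formulas \eqref{induction_of_Lij} expressing every $L_{ij}$ as an iterated $q$-commutator of the simple ones. Where you genuinely diverge is injectivity. The paper proves it by running the derivation backwards: it defines the $L_{ij}$ inside the presented algebra via \eqref{induction_of_Lij} and verifies, in a long case-by-case induction organized by the index configurations $\Lambda_s\times\Lambda_t\cap C_i$ and $\Lambda_s\times\Lambda_t\cap D_i$, that these elements satisfy Olshanski's relations \eqref{defining_rel_Uqqn_in_Olshanski}; this yields a homomorphism in the opposite direction that inverts $\phi$ on generators. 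Your route --- a PBW-type spanning set for the presented algebra plus a rank count against Olshanski's flat deformation --- would replace that combinatorial case analysis by a dimension comparison, which is an attractive trade if it can be made to work.

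As written, however, it has two genuine gaps. First, you propose to use ``the triangular decomposition of $\tilde U$ (the content of the next subsection)''; in the paper that decomposition (Lemma \ref{half decom} and Theorem \ref{tri decom}) is proved \emph{after} and \emph{using} Theorem \ref{defining relations of Uqqn} --- its proof relies on the comultiplication \eqref{comult} and on the basis of $U_q^0$ from \cite[Theorem 6.2]{O}, both of which live on Olshanski's algebra and are transported to the presented algebra only through the isomorphism you are trying to establish. You may legitimately use only the spanning half of the triangular decomposition, which follows from the relations alone by a straightening argument you would still have to carry out; the linear-independence half is off-limits at this stage. Second, a ``weight-space-by-weight-space rank count'' does not literally make sense here: every weight space of $U_q(\gq(n))$ is infinite-dimensional because of the toral part $q^h$ ($h\in P^{\vee}$), so you need either a finer filtration compatible with the $\mathbf{A}_1$-form and the $q\to 1$ specialization, or a direct comparison of your spanning set with Olshanski's PBW basis. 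Neither obstacle is fatal, but both demand substantive work the proposal does not supply, whereas the paper's inverse-homomorphism argument sidesteps them entirely at the cost of the lengthy case check.
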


\begin{proof}
  Let $U$ be the unital associative algebra over $\C (q)$ generated by the
elements $e_i,f_i,e_{\bar{i}}, f_{\bar{i}}$ $(i = 1,...,n-1)$,
$k_{\bar{l}}$ $(l = 1,...,n)$, and $q^h$ $(h \in P^{\vee})$ with
defining relations given in \eqref{defining relations of our
Uq(q(n))}. Using \eqref{defining_rel_Uqqn_in_Olshanski} and
\eqref{generators_of_olshanski}, the relations in \eqref{defining
relations of our Uq(q(n))} can be derived easily. Thus there is a
well-defined algebra homomorphism $\phi : U \ra U_q(\gg)$.

From the relation \eqref{defining_rel_Uqqn_in_Olshanski}, we obtain
\begin{equation} \bea \label{induction_of_Lij}
     &L_{i,i+j}  = ~ (q-q^{-1})q^{- \sum_{h=1}^{j-1} k_{i+h}} \prod_{h=1}^{j-1} \ad f_{i+h} (f_i),  \\
     &L_{-i,i+j}  = ~-(q-q^{-1}) q^{- \sum_{h=1}^{j-1} k_{i+h}} \prod_{h=1}^{j-1} \ad f_{i+h} (f_{\bar{i}}) , \\
     &L_{-i-j,~i}  = ~(-1)^j (q-q^{-1})q^{\sum_{h=1}^{j-1} k_{i+h}} \prod_{h=1}^{j-1} \ad e_{i+h} (e_{\bar{i}}), \\
     &L_{-i-j,-i}  = ~(-1)^j (q-q^{-1})q^{\sum_{h=1}^{j-1} k_{i+h}} \prod_{h=1}^{j-1} \ad e_{i+h} (e_i),
\eea \end{equation} where $\ad b_i (b_j) :=b_ib_j-b_jb_i $,
$\prod^{j}_{h=1} \ad b_{i+h} (b_i) := \ad b_{i+j} \cdots \ad b_{i+1}
(b_i)$ and $\prod^{0}_{h=1} \ad b_{i+h} (b_i) =b_i$ for $b_i
=e_i,e_{\bar{i}},f_i, f_{\bar{i}} $ ($i=1, \cdots, n-1, \ j>0$). It follows that the homomorphism $\phi$ must be surjective.

It remains to prove $\phi$ is injective. For this purpose, we will
show that the relations in \eqref{defining_rel_Uqqn_in_Olshanski}
can be derived from the ones in \eqref{defining relations of our
Uq(q(n))}. The proof of our assertion is quite lengthy and tedious.
But the basic idea is just the case-by-case check-up.

\def\La{\Lambda}

We define the sets \begin{align*}
&\Lambda = \{(i,j) \in \Z / \{ 0 \} \times \Z / \{ 0 \}  \ | \ -n \leq i  \leq j \leq n \} , \ && \Lambda_1 = \{(i,j) \in \Lambda \ | \ i >0, j>0 \ \text{and} \ i < j \},\\
 & \Lambda_2 = \{(i,j) \in \Lambda \ | \ i <0, j>0 \ \text{and} \ |i| < |j| \},\ \ && \Lambda_3 = \{(i,j) \in \Lambda \ | \ i <0, j>0 \ \text{and} \ |i| > |j| \},\\
  & \Lambda_4  = \{(i,j) \in \Lambda \ | \ i <0, j<0 \ \text{and} \ |i| > |j| \},\ && \Lambda_5 = \{(i,j) \in \Lambda \ | \ i <0, j>0 \ \text{and} \ |i|=|j| \}.
\end{align*}

For $((i,j),(k,l)) \in \Lambda \times \Lambda $, let $a=\min \{
|i|,|j| \}, \ b=\max\{ |i|,|j| \}, \ c=\min \{ |k|,|l| \}, \
d=\max\{ |k|,|l| \}$. We list all possible subsets of $\Lambda \times \Lambda$:
\begin{align*} \allowdisplaybreaks
C_1&=\{((i,j),(k,l)) \in \Lambda \times \Lambda \ | \  c < d < a < b \}, && C_2=\{((i,j),(k,l)) \in \Lambda \times \Lambda \ | \  c < d = a < b \},\\
C_3&=\{((i,j),(k,l)) \in \Lambda \times \Lambda \ | \  c < a < d < b \}, && C_4=\{((i,j),(k,l)) \in \Lambda \times \Lambda \ | \  c < a < d = b \},\\
C_5&=\{((i,j),(k,l)) \in \Lambda \times \Lambda \ | \  c < a < b < d \},
&& C_6=\{((i,j),(k,l)) \in \Lambda \times \Lambda \ | \  c = a < d < b \},\\
 C_7&=\{((i,j),(k,l)) \in \Lambda \times \Lambda \ | \  c = a < d = b \},
&& C_8=\{((i,j),(k,l)) \in \Lambda \times \Lambda \ | \  c = a < b < d \},\\
 C_9&=\{((i,j),(k,l)) \in \Lambda \times \Lambda \ | \  a < c < d < b \},
&& C_{10}=\{((i,j),(k,l)) \in \Lambda \times \Lambda \ | \  a < c < d = b \},\\
C_{11}&=\{((i,j),(k,l)) \in \Lambda \times \Lambda \ | \  a < c < b < d \},
&& C_{12}=\{((i,j),(k,l)) \in \Lambda \times \Lambda \ | \  a < b = c < d \},\\
C_{13}&=\{((i,j),(k,l)) \in \Lambda \times \Lambda \ | \  a < b < c < d \}, &&  D_{1}=\{((i,j),(k,l)) \in \Lambda_5 \times \Lambda \ | \  |i|<c < d \},\\
D_{2} & =\{((i,j),(k,l)) \in \Lambda_5 \times \Lambda \ | \  |i|=c < d \}, && D_{3}=\{((i,j),(k,l)) \in \Lambda_5 \times \Lambda \ | \  c < |i| < d \},\\
D_{4}&=\{((i,j),(k,l)) \in \Lambda_5 \times \Lambda \ | \  c < |i| = d \}, && D_{5}=\{((i,j),(k,l)) \in \Lambda_5 \times \Lambda \ | \  c < d < |i| \},\\
D_{6}&=\{((i,j),(k,l)) \in \Lambda \times \Lambda_5 \ | \  |k| < a < b \}, && D_{7}=\{((i,j),(k,l)) \in \Lambda \times \Lambda_5 \ | \  |k|=a < b \},\\
D_{8}&=\{((i,j),(k,l)) \in \Lambda \times \Lambda_5 \ | \  a < |k| < b \}, && D_{9}=\{((i,j),(k,l)) \in \Lambda \times \Lambda_5 \ | \  a < b = |k| \},\\
D_{10}&=\{((i,j),(k,l)) \in \Lambda \times \Lambda_5 \ | \  a < b <|k| \}.
\end{align*}

We consider all cases for $\La_s \times \La_t \cap C_i$ ($1 \leq
s,t \leq 4, \ 1 \leq i \leq 13$) and $\La_s \times \La_t \cap
D_{i}$ ($ s=5, 1 \leq t \leq 4 $ or $1 \leq s \le 4, t=5$ and  $1 \leq i \leq 10$). Since the remaining
cases can be checked similarly, we just prove:
    \begin{align}
     &L_{i,i}L_{k,l}L_{i,i}^{-1}=q^{\varphi(l,i)-\varphi(k,i)}L_{k,l} && \text{if} \ \ (k,l) \in \La_1 \cup \La_2, \label{first relation}\\
     &L_{i,j}L_{k,l}- L_{k,l}L_{i,j}=0 \ \ \ &&\text{if} \ \  ((i,j),(k,l)) \in \La_1 \times \La_1 \cap C_{1}, \label{second relation} \\
    &L_{i,j}L_{k,l}- L_{k,l}L_{i,j}=(q-q^{-1})L_{i,l}L_{k,j} \ \ \ \ &&\text{if} \ \  ((i,j),(k,l)) \in \La_1 \times \La_1 \cap C_{2}, \label{third relation} \\
     &(L_{i,j})^2=\dfrac{q-q^{-1}}{q+q^{-1}} (L_{-i,j})^2 \ \  &&\text{if} \ \ (i,j) \in
      \La_2 \label{fourth relation}.
    \end{align}

From \eqref{induction_of_Lij}, we obtain
  \begin{equation*} \bea \label{relations of Lij}
     &L_{i,j}  = \frac{L_{j-1,j-1}^{-1}}{q-q^{-1}} (L_{j-1,j}L_{i,j-1}-L_{i,j-1}L_{j-1,j}) && \text{if} \ \ (i,j) \in \La_1 \cup \La_2, \\
    &L_{i,j}  = \frac{L_{-i-1,-i-1}}{q-q^{-1}} (L_{i,i+1}L_{i+1,j}-L_{i+1,j}L_{i,i+1}) && \text{if} \ \ (i,j)  \in \La_3 \cup \La_4.
\eea \end{equation*}
\noindent To prove \eqref{first relation}, we
use induction on $l-k$:
\begin{align*}
L_{i,i}L_{k,l}L_{i,i}^{-1} &=  \frac{L_{l-1,l-1}^{-1}}{q-q^{-1}} L_{i,i}(L_{l-1,l}L_{k,l-1}-L_{k,l-1}L_{l-1,l}) L_{i,i}^{-1}\\
 &=q^{\varphi(l,i)-\varphi (l-1,i) + \varphi (l-1,i)-\varphi(k,i)} \frac{L_{l-1,l-1}^{-1}}{q-q^{-1}} \big( L_{l-1,l}L_{k,l-1}-L_{k,l-1}L_{l-1,l} \big)\\
 &=q^{\varphi(l,i)-\varphi (k,i)} L_{k,l}.
\end{align*}

From \eqref{defining relations of our Uq(q(n))}, we know that
$f_if_j-f_jf_i=0$ if $|i-j| >1$. By using induction on $j-i$ and
\eqref{first relation}, one can show that
$L_{i,j}L_{k,k+1}-L_{k,k+1}L_{i,j}=0$  when $((i,j),(k,k+1))\in
\La_1 \times \La_1 \cap C_1$. Similarly, one can prove
$L_{i,j}L_{k,l}-L_{k,l}L_{i,j}=0$ by induction on $l-k$. The proof
of \eqref{third relation} is analogous (we use
induction on $l-k$ and \eqref{first relation}, \eqref{second
relation}):
\begin{align*}
  L_{i,j}L_{k,l} &=\frac{L_{l-1,l-1}^{-1}}{q-q^{-1}} L_{i,j}(L_{l-1,l}L_{k,l-1}-L_{k,l-1}L_{l-1,l})\\
                 &=\frac{L_{l-1,l-1}^{-1}}{q-q^{-1}} \big( L_{l-1,l}L_{i,j}L_{k,l-1}+(q-q^{-1})L_{i,l}L_{l-1,j}L_{k,l-1} -L_{k,l-1}L_{i,j}L_{l-1,l} \big)\\
                 &=\frac{L_{l-1,l-1}^{-1}}{q-q^{-1}} \big( L_{l-1,l}L_{k,l-1}L_{i,j}+(q-q^{-1})L_{i,l}L_{l-1,j}L_{k,l-1} -L_{k,l-1}L_{l-1,l}L_{i,j}\\
                 & \hskip5em -(q-q^{-1})L_{k,l-1}L_{i,l}L_{l-1,j} \big)\\
                  &=L_{k,l}L_{i,j}+L_{l-1,l-1}^{-1} L_{i,l}(L_{l-1,j}L_{k,l-1}-L_{k,l-1}L_{l-1,j})\\
                 &=L_{k,l}L_{i,j}+(q-q^{-1})L_{i,l}L_{k,j}.
\end{align*}

 To verify the relation \eqref{fourth relation}, it suffices to show that
\begin{align*}
  (L_{j-1,j}L_{i,j-1}-L_{i,j-1}L_{j-1,j})^2=\dfrac{q-q^{-1}}{q+q^{-1}} (L_{j-1,j}L_{-i,j-1}-L_{-i,j-1}L_{j-1,j})^2.
\end{align*}
For this purpose, we need the following formulas for $(i,j) \in \La_2$ which can be
derived using induction:
\begin{align*}
  &L_{j-1,j}L_{i,j-1}L_{j-1,j}=\dfrac{1}{q+q^{-1}}(q L_{i,j-1}L_{j-1,j}^2+q^{-1}L_{j-1,j}^2L_{i,j-1}),\\
  &qL_{-i,j-1}L_{j-1,j}^2 -(q+q^{-1})L_{j-1,j}L_{-i,j-1}L_{j-1,j}+q^{-1}L_{j-1,j}^2L_{-i,j-1}=0.
\end{align*}

Using these formulae, we can verify the desired relations
\begin{align*}
  &(L_{j-1,j}L_{i,j-1}-L_{i,j-1}L_{j-1,j})^2 \\
  %&=L_{j-1,j}L_{i,j-1}L_{j-1,j}L_{i,j-1}-L_{j-1,j}L_{i,j-1}^2 L_{j-1,j} -L_{i,j-1}L_{j-1,j}^2L_{i,j-1}+L_{i,j-1}L_{j-1,j}L_{i,j-1}L_{j-1,j}\\
  &=(L_{j-1,j}L_{i,j-1}L_{j-1,j})L_{i,j-1}-\frac{q-q^{-1}}{q+q^{-1}}L_{j-1,j}L_{-i,j-1}^2 L_{j-1,j} -L_{i,j-1}L_{j-1,j}^2L_{i,j-1} \\
  & \hskip1em +L_{i,j-1}(L_{j-1,j}L_{i,j-1}L_{j-1,j})\\
  %&=\frac{1}{q+q^{-1}} \Big( q L_{i,j-1} L_{j-1,j}^2 L_{i,j-1} +q^{-1} L_{j-1,j}^2 L_{i,j-1}^2 +q L_{i,j-1}^2 L_{j-1,j}^2 +q^{-1} L_{i,j-1} L_{j-1,j}^2 L_{i,j-1} \\
  %&  \hskip5em  -(q+q^{-1})L_{i,j-1} L_{j-1,j}^2 L_{i,j-1} \Big) - \frac{q-q^{-1}}{q+q^{-1}}L_{j-1,j}L_{-i,j-1}^2 L_{j-1,j}       \\
  &= \frac{q-q^{-1}}{q+q^{-1}} \Big( \frac{q^{-1}}{q+q^{-1}}  L_{j-1,j}^2 L_{-i,j-1}^2 + \frac{q}{q+q^{-1}}  L_{-i,j-1}^2 L_{j-1,j}^2 -         L_{j-1,j}L_{-i,j-1}^2 L_{j-1,j} \Big) \\
  &= \frac{q-q^{-1}}{q+q^{-1}} \Big( ( L_{j-1,j} L_{-i,j-1} L_{j-1,j} - \frac{q}{q+q^{-1}}L_{-i,j-1} L_{j-1,j}^2 )L_{-i,j-1} \\
  &  +L_{-i,j-1}(L_{j-1,j} L_{-i,j-1} L_{j-1,j} - \frac{q^{-1}}{q+q^{-1}} L_{j-1,j}^2 L_{-i,j-1} ) -L_{j-1,j}L_{-i,j-1}^2 L_{j-1,j}                  \Big) \\
  %&= \frac{q-q^{-1}}{q+q^{-1}} \big( L_{j-1,j}L_{-i,j-1}L_{j-1,j}L_{-i,j-1}-L_{-i,j-1}L_{j-1,j}^2L_{-i,j-1}+L_{-i,j-1}L_{j-1,j}L_{-i,j-1}L_{j-1,j}  \\
  %&  \hskip5em  -L_{j-1,j}L_{-i,j-1}^2 L_{j-1,j} \big)\\
  &=\frac{q-q^{-1}}{q+q^{-1}}\big( L_{j-1,j}L_{-i,j-1}-L_{-i,j-1}L_{j-1,j} \big)^2.
 \end{align*} \end{proof}

Set $\deg f_i =\deg f_{\bar{i}}=-\alpha_i,~ \deg q^h =\deg
k_{\bar{l}}=0,~\deg e_i =\deg e_{\bar{i}}=\alpha_i $. Since all the
defining relations of the quantum superalgebra $U_q(\gg)$ are
homogeneous, it has a root space decomposition
\begin{equation*}
\begin{aligned}
  U_q(\gg) = \bigoplus_{\alpha \in Q} (U_q)_{\alpha},
\end{aligned}
\end{equation*}
where $(U_q)_{\alpha}~=~\{u \in U_q(\gg) ~| ~ q^h u q^{-h}=
q^{\alpha(h)}u ~~\text{for all}~ h \in P^{\vee}\}$.

\begin{remark}
 If we define
 \beq  F_i=f_i q^{-k_{i+1}}, \ E_i=q^{k_{i+1}}e_i,
 \eeq
one can see that the relations involving $E_i$, $F_i$ and $q^h$ are
the same as the standard relations for $U_q(\goth{gl}_n)$ (see, for
example, \cite[Definition 7.1.1]{HK2002}). Hence $U_q(\goth{gl}_n)$
is a subalgebra of $\Uqqn$.
\end{remark}

The comultiplication $\Delta$ of $U_q(\gg)$ is given
by the formula
\begin{equation}\label{comult}
\Delta(L_{i,j}) = \sum_{k=i}^j L_{i,k} \otimes L_{k,j},
\end{equation}
(see \S 4 in \cite{O}). In terms of the new generators we have:

\begin{align*}\label{Delta}
\Delta(q^h) & =  q^h \otimes q^h \mbox{ for every }h \in P^{\vee},\\
\Delta(e_i) & =  q^{-k_{i+1}} \otimes e_i + e_i \otimes q^{-k_i},\\
\Delta(f_i) & =  q^{k_{i}} \otimes f_i + f_i \otimes q^{k_{i+1}},\\
\Delta(e_{\bar{i}}) & =  q^{-k_{i+1}} \ot e_{\bar{i}} -(q-q^{-1})e_i \ot k_{\bar{i}} \\
     & +(q-q^{-1}) \Bigg(\sum_{j=1}^{i-1} (-1)^{j+1}   q^{\sum_{h=1}^{j} k_{i-j+h}} \prod_{h=1}^{j} \ad e_{i-j+h} (e_{i-j}) \\
     & \ot q^{- \sum_{h=1}^{j-1} k_{i-j+h}} \prod_{h=1}^{j-1} \ad f_{i-j+h} (f_{\overline{i-j}}) \Bigg)   \\
&  +(q-q^{-1}) \Bigg(\sum_{j=1}^{i-1} (-1)^{j}   q^{\sum_{h=1}^{j} k_{i-j+h}} \prod_{h=1}^{j} \ad e_{i-j+h} (e_{\overline{i-j}}) \\
& \ot q^{- \sum_{h=1}^{j-1} k_{i-j+h}} \prod_{h=1}^{j-1} \ad f_{i-j+h} (f_{i-j})  \Bigg) + e_{\bar{i}} \ot q^{k_{i}}, \\
\Delta(f_{\bar{i}}) & =  q^{-k_i} \ot f_{\bar{i}}  \\
&  + (q-q^{-1}) \Bigg(\sum_{j=1}^{i-1}  (-1)^j  q^{\sum_{h=1}^{j-1} k_{i-j+h}} \prod_{h=1}^{j-1} \ad e_{i-j+h} (e_{i-j}) \\
& \ot q^{- \sum_{h=1}^{j} k_{i-j+h}} \prod_{h=1}^{j} \ad f_{i-j+h} (f_{\overline{i-j}}) \Bigg)   \\
&  + (q-q^{-1}) \Bigg(\sum_{j=1}^{i-1} (-1)^{j+1}   q^{\sum_{h=1}^{j-1} k_{i-j+h}} \prod_{h=1}^{j-1} \ad e_{i-j+h} (e_{\overline{i-j}}) \\
& \ot q^{- \sum_{h=1}^{j} k_{i-j+h}} \prod_{h=1}^{j} \ad f_{i-j+h} (f_{i-j})  \Bigg)  \\
&   + (q-q^{-1}) ~ k_{\bar{i}} \ot f_i +f_{\bar{i}} \ot q^{k_{i+1}}, \\
\Delta(k_{\bar{i}}) & =  q^{-k_i} \ot k_{\bar{i}}  \\
&   + (q-q^{-1}) \Bigg(\sum_{j=1}^{i-1}  (-1)^j   q^{\sum_{h=1}^{j-1} k_{i-j+h}} \prod_{h=1}^{j-1} \ad e_{i-j+h} (e_{i-j}) \\
&  \ot q^{- \sum_{h=1}^{j-1} k_{i-j+h}} \prod_{h=1}^{j-1} \ad f_{i-j+h} (f_{\overline{i-j}})\Bigg)  \\
&  + (q-q^{-1})\Bigg(\sum_{j=1}^{i-1} (-1)^{j+1} q^{\sum_{h=1}^{j-1}
k_{i-j+h}} \prod_{h=1}^{j-1} \ad e_{i-j+h} (e_{\overline{i-j}}) \\
& \ot q^{- \sum_{h=1}^{j-1} k_{i-j+h}} \prod_{h=1}^{j-1} \ad f_{i-j+h} (f_{i-j}) \Bigg) +  k_{\bar{i}} \ot q^{k_{i}}.
\end{align*}

Let $U_q^{+}$ (respectively, $U_q^{-}$) be the subalgebra of
$U_q(\gg)$ generated by the elements $e_i, e_{\bar{i}}$
(respectively, $f_i, f_{\bar{i}}$) for $i=1, ... , n-1$, and let
$U_q^{0}$ be the subalgebra of  $U_q(\gg)$ generated by $q^h$ ($h
\in P^{\vee}$) and $k_{\bar{l}}$ for $l=1,... ,  n$. In addition,
let $U_q^{\geq 0}$ (respectively, $U_q^{\leq 0}$) be the subalgebra
of $U_q(\gg)$ generated by $U_q^{+}$ and $U_q^0$ (respectively, by
$U_q^{-}$ and $U_q^0$). We will show that the quantum
superalgebra $U_q(\gg)$ has a triangular decomposition. For this
purpose, we need the following lemma.

\begin{lemma} \label{half decom} \hfill
$$U_q^{\geq 0} \cong U_q^{0} \otimes U_q^{+}, \qquad
U_q^{\leq 0} \cong U_q^{-} \otimes U_q^{0}.$$
\end{lemma}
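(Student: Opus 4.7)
The plan is to prove both isomorphisms in parallel; I describe the argument for $U_q^{\geq 0} \cong U_q^0 \otimes U_q^+$ in detail, since the second isomorphism follows by an entirely analogous argument (or, alternatively, by invoking a $\C(q)$-linear anti-automorphism exchanging $e_i \leftrightarrow f_i$, $e_{\bar i} \leftrightarrow f_{\bar i}$, $q^h \leftrightarrow q^{-h}$, $k_{\bar i} \leftrightarrow k_{\bar i}$, which the symmetries in \eqref{defining relations of our Uq(q(n))} appear to support, and which swaps $U_q^{\geq 0}$ with $U_q^{\leq 0}$ and $U_q^+$ with $U_q^-$ while preserving $U_q^0$).

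The natural candidate isomorphism is the $\C(q)$-linear multiplication map $\mu : U_q^0 \otimes U_q^+ \to U_q^{\geq 0}$. I would first establish surjectivity. Since $U_q^{\geq 0}$ is spanned as a vector space by monomials in the generators $q^h$, $k_{\bar l}$, $e_i$, $e_{\bar i}$, I proceed by induction on the length of such a monomial. The inductive step reduces to showing that a product $x h$ with $x \in \{e_i, e_{\bar i}\}$ and $h$ a generator of $U_q^0$ lies in $U_q^0 \cdot U_q^+$. The relations $e_i q^h = q^{-\alpha_i(h)} q^h e_i$ and $e_{\bar i} q^h = q^{-\alpha_i(h)} q^h e_{\bar i}$ handle the $q^h$ cases directly. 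The $k_{\bar l}$ cases are handled by rearranging the relations in \eqref{defining relations of our Uq(q(n))}, namely $k_{\bar i} e_i = q e_i k_{\bar i} + e_{\bar i} q^{-k_i}$, $q k_{\bar i} e_{i-1} = e_{i-1} k_{\bar i} - q^{-k_i} e_{\overline{i-1}}$, $k_{\bar i} e_{\bar i} = -q e_{\bar i} k_{\bar i} + e_i q^{-k_i}$, $q k_{\bar i} e_{\overline{i-1}} = -e_{\overline{i-1}} k_{\bar i} + q^{-k_i} e_{i-1}$, together with the (super)commutation $k_{\bar i} e_j = e_j k_{\bar i}$ and $k_{\bar i} e_{\bar j} = -e_{\bar j} k_{\bar i}$ for $j \neq i, i-1$. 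In every case $xh$ becomes a $\C(q)$-linear combination of products (element of $U_q^0$)(element of $U_q^+$), so the induction closes and $\mu$ is surjective.

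For injectivity, the approach is to endow the tensor product $M := U_q^0 \otimes U_q^+$ with a left $U_q^{\geq 0}$-module structure and exhibit $\mu$ as a morphism of $U_q^{\geq 0}$-modules sending the cyclic vector $1 \otimes 1$ to $1$. On $M$, the subalgebra $U_q^0$ acts by left multiplication on the first tensor factor, while each generator $x \in \{e_i, e_{\bar i}\}$ acts by the operator that mimics the commutation moves from the surjectivity step: writing $x h = \sum_\alpha h'_\alpha x'_\alpha$ in $U_q^0 \cdot U_q^+$ for any $h \in U_q^0$, one sets $x \cdot (h \otimes y) := \sum_\alpha h'_\alpha \otimes x'_\alpha y$ and extends multiplicatively to the generators of $U_q^+$. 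After verifying that these operators satisfy all defining relations of $U_q^{\geq 0}$, $\mu$ becomes a morphism of left $U_q^{\geq 0}$-modules; since $U_q^{\geq 0} = U_q^{\geq 0} \cdot 1$ is cyclic on the unit, any element of $\ker \mu$ must annihilate $1 \otimes 1 \in M$ as well, forcing it to be zero.

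The main obstacle is the consistency check for the $U_q^+$-action on $M$: one must confirm that the defining relations of $U_q^+$ (notably the quantum Serre relations, the relation $e_{\bar i}^2 = -\tfrac{q-q^{-1}}{q+q^{-1}} e_i^2$, the supercommutations $e_{\bar i} e_{\bar j} + e_{\bar j} e_{\bar i} = 0$ for $|i-j|>1$, and the mixed relations $e_i e_{i+1} - e_{i+1} e_i = e_{\bar i} e_{\overline{i+1}} + e_{\overline{i+1}} e_{\bar i}$) hold when the generators are replaced by the operators just defined. This is a lengthy but mechanical case-by-case verification, parallel in spirit to the case analysis in the proof of Theorem \ref{defining relations of Uqqn}. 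An alternative that avoids this verification would be to derive a PBW-type basis for $U_q(\gg)$ from Olshanski's $RLL$-presentation and read off the triangular decomposition directly; this is essentially the route taken in the classical case via \eqref{eq:tri decomp of U(q(n))}.
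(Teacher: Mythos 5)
Your surjectivity argument coincides with the paper's (the straightening relations in \eqref{defining relations of our Uq(q(n))} push every generator of $U_q^0$ to the left of every generator of $U_q^+$, and to the right of every generator of $U_q^-$), but the injectivity step has a genuine gap. You propose to make $M=U_q^0\otimes U_q^+$ into a left $U_q^{\geq 0}$-module by prescribing the action of the generators and then ``verifying that these operators satisfy all defining relations of $U_q^{\geq 0}$.'' However, $U_q^{\geq 0}$ is defined as a subalgebra of $U_q(\gg)$, not by generators and relations; the only relations available to check are those from Theorem \ref{defining relations of Uqqn} involving only $q^h$, $k_{\bar l}$, $e_i$, $e_{\bar i}$. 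Checking these yields a module over the \emph{abstract} algebra $\widetilde U^{\geq 0}$ presented by that subset of relations, which comes equipped with a surjection $\pi:\widetilde U^{\geq 0}\twoheadrightarrow U_q^{\geq 0}$ that is not yet known to be injective --- determining the presentation of $U_q^{\geq 0}$ is essentially equivalent to what you are trying to prove. Your cyclic-vector argument then shows that the monomials $q^h k_{\eta}e_{\tau}$ are linearly independent in $\widetilde U^{\geq 0}$, but linear independence does not descend along a surjection, so nothing follows about $U_q^{\geq 0}\subset U_q(\gg)$. A module-theoretic proof can be made to work, but only by representing the \emph{full} algebra $U_q(\gg)$ (whose presentation Theorem \ref{defining relations of Uqqn} supplies) on the whole triple product built from abstractly presented halves, as in the Hong--Kang treatment of Kac--Moody quantum groups; that is a far larger verification and would establish Lemma \ref{half decom} and Theorem \ref{tri decom} simultaneously.

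The paper sidesteps this circularity by using the comultiplication $\Delta$, an algebra homomorphism $U_q(\gg)\to U_q(\gg)\otimes U_q(\gg)$ inherited from Olshanski's construction, which requires no presentation of the halves: given a dependence $\sum C_{\zeta,h,\eta}\,f_{\zeta}q^h k_{\eta}=0$, one applies $\Delta$, isolates the component of extremal bidegree $(-\beta,0)$, and extracts the coefficients by a downward induction on $\eta$ using the explicit leading terms of $\Delta(f_{\zeta})$, $\Delta(q^h)$ and $\Delta(k_{\bar i})$. You should replace your injectivity mechanism with this (or another presentation-free) device. A minor further point: the anti-automorphism you suggest for deducing the second isomorphism is unverified, and the naive assignment $e_i\leftrightarrow f_i$, $q^h\mapsto q^{-h}$ fails already on $e_if_i-f_ie_i=\frac{q^{h_i}-q^{-h_i}}{q-q^{-1}}$ (the left side is fixed under product reversal while the right side changes sign); the safe statement is simply that the $U_q^{\leq 0}$ case is the mirror-image computation.
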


\begin{proof}
We will prove the second part. Let $\{f_{\zz} ~|~ \zz \in
\Omega \}$  be a basis of $U_q^{-}$ consisting of monomials in $f_i$
and $f_{\bar{i}}$'s  ($i \in I$). Consider a set $\Omega' = \{(a_1,\ldots,a_n) ~|~ a_i=0 \ \mbox{or} \ 1 \ \mbox{for all} \ i \in J \}$. Then   $\{q^h k_{\ee}
~|~ h \in P^{\vee},~ \ee \in \Omega ^{'} \}$ is a basis of
$U_q^{0}$, where $k_{\ee}=k_{\bar{1}}^{a_1} \cdots
k_{\bar{n}}^{a_n}$ for $\eta =(a_1, \cdots,a_n)$ by \cite[Theorem 6.2]{O}.
 By the defining relations of $U_q (\gg)$, it is easy to see that
the elements $f_{\zz}q^h k_{\ee} ~(\zz \in \Omega, ~h \in P^{\vee},
~\ee \in \Omega^{'})$ span $U_q^{\leq 0}$. Thus there is a
surjective $\mathbb{C}(q)$-linear map  $U_q^{-} \otimes U_q^{0}
\longrightarrow U_q^{\leq 0}$ given by $f_{\zz} \otimes q^hk_{\ee}
\longrightarrow f_{\zz}q^hk_{\ee}$. To show that this map is
injective, it suffices to show that the elements
$f_{\zz}q^hk_{\ee}~(\zz \in \Omega, ~h \in P^{\vee}, ~\ee \in
\Omega^{'})$ are linearly independent over $\mathbb{C}(q)$.

Suppose
\begin{equation*}
    \begin{aligned}
      \sum_{\substack{\zz \in \Omega ,~ h \in P^{\vee} , \\
      \ee \in \Omega^{'} }} C_{\zz, h, \ee} \fqk =0  \ \ \ \
      \text{for some }~~ C_{\zz, h, \ee} \in \mathbb{C}(q).
    \end{aligned}
  \end{equation*}
We may write
  \begin{equation*}
    \begin{aligned}
      \sum_{\beta \in Q_{+}}\Big(\sum_{\substack {\deg f_{\zz}=-\beta, \\ h \in P^{\vee},~ \ee \in \Omega^{'}}}
      C_{\zz, h, \ee} \fqk \Big) =0  \ \ \ \ \text{for some }~~ C_{\zz, h, \ee} \in \mathbb{C}(q).
    \end{aligned}
  \end{equation*}
Since $U_q(\gg) = \bigoplus_{\beta \in Q} (U_q)_{\beta}$, we have
\begin{equation*}
    \begin{aligned}
      \sum_{\substack {\deg f_{\zz}=-\beta,\\ h \in P^{\vee},~ \ee \in \Omega^{'}}}
      C_{\zz, h, \ee} \fqk =0  \ \ \ \ \text{for each }~~ \beta \in Q_{+}.
    \end{aligned}
    \end{equation*}

Write $\beta = -\sum_{i=1}^{n-1} m_i \alpha_i ~(m_i \in
\mathbb{Z}_{\geq 0})$, and let $h_{\beta}=\sum^{n-1}_{i=1} m_i
k_{i+1}$. Since $f_{\zz}$ is a monomial in $f_i$ and
$f_{\bar{i}}$'s, the term of degree $(-\beta,0)$ in $\Delta(\fz)$
is $\fz \otimes q^{h_{\beta}}$. We consider the terms of degree
$(0,0)$ in $\Delta(k_{\ee})$ where $\ee=(a_1,
\cdots, a_n)$. Then the terms of degree (0,0) in $\Delta(k_{\ee})$
can be written as
\begin{equation*}
  \begin{aligned}
    &(q^{-k_1}\otimes k_{\bar{1}}+k_{\bar{1}} \otimes q^{k_1})^{a_1} \cdots (q^{-k_n}\otimes k_{\bar{n}}+k_{\bar{n}} \otimes q^{k_n})^{a_n} \\
    &= \prod _{i=1}^{n}\Big( \sum^{a_i}_{j=0} q^{-(a_i -j)k_i}k_{\bar{i}}^j \ot q^{j k_i} k_{\bar{i}}^{a_i -j} \Big)\\
    &=\sum_{(j_1,\ldots,j_n) \in \Omega' \atop j_i \leq a_i, \ i \in J}  \prod_{i=1}^{n} \Big( q^{-(a_i -j_i)k_i}k_{\bar{i}}^{j_i} \ot q^{j_i k_i} k_{\bar{i}}^{a_i -j_i} \Big).
  \end{aligned}
\end{equation*}
Since the terms of degree $(-\beta,0)$ of $\sum C_{\zz, h, \ee}
\Delta(\fqk)$ must sum to zero, we have
\begin{equation} \label{terms of degree (-beta,0)}
0=\sum_{\ee=(a_1,\ldots,a_n) \atop \in \Omega'}
    \sum_{(j_1,\ldots,j_n) \in ~ \Omega' \atop j_i \leq a_i, \ i \in J} \Bigg(\sum_{\deg f_{\zz}
    =-\beta, \atop h \in P^{\vee}} C_{\zz, h, \ee} \fz \qh
    \Big( \prod^{n}_{i=1}
    q^{-(a_i -j_i)k_i}k_{\bar{i}}^{j_i} \Big) \ot q^{h_{\beta} +h}
    \Big( \prod_{i=1}^{n} q^{j_i k_i} k_{\bar{i}}^{a_i -j_i} \Big) \Bigg).
\end{equation}

For all $ (a_1-j_1, \cdots , a_n-j_n) \in \Omega^{'} $ and $h \in P^{\vee}$, the elements
$q^{h} \big(\prod_{i=1}^{n} k_{\bar{i}}^{a_i -j_i}\big)$ are
linearly independent. Set $\eta_1:=(1, \ldots,1)$. Since there is only one pair of $(a_1, \ldots, a_n)$ and $(j_1, \ldots, j_n)$ such that $\prod_{i=1}^n k_{\bar i}^{a_i-j_i} = k_{\eta_1}$ in the above sum, we obtain
\begin{align*}
0=&\sum_{h \in P^{\vee}} \sum_{\deg \fz=-\beta} C_{\zz, h, \ee_1} \fz q^{h-\sum_{i=1}^n k_i} \ot q^{h_{\beta} +h} k_{\ee_1}  \\
&+\sum_{\eta=(a_1, \ldots, a_n), ~ (j_1, \ldots,j_n),  \atop  (a_1-j_1,\ldots,a_n-j_n) \neq \ee_1} \sum_{h \in P^{\vee}, \atop \deg \fz=-\beta } C_{\zz, h, \ee} \fz \qh
    \Big( \prod^{n}_{i=1}
    q^{-(a_i -j_i)k_i}k_{\bar{i}}^{j_i} \Big) \ot q^{h_{\beta} +h}
    \Big( \prod_{i=1}^{n} q^{j_i k_i} k_{\bar{i}}^{a_i -j_i} \Big).
\end{align*}

Thus we have
\begin{align*}
    &\sum_{\deg f_{\zz}=-\beta} C_{\zz, h, \ee_1} \fz q^{h-\sum_{i=1}^n k_i}  =0 \ \ \text{for all} \ h \in P^{\vee}.
\end{align*}

Multiplying by $q^{-h+\sum_{i=1}^n k_i}$ from the right  we obtain
\begin{align*}
    &\sum_{\deg f_{\zz}=-\beta} C_{\zz, h, \ee_1} \fz  =0  \ \ \text{for all} \ h \in P^{\vee}.
\end{align*}
Using the linear independence of $\fz$, we conclude all $C_{\zeta,
h, \eta_1}=0$ for all $\zeta \in \Omega, h \in P^{\vee}$.
 Now consider general $\eta=(a_1,\ldots,a_n) \in \Omega'$. Assume that for all $\eta'=(a'_1, \ldots, a'_n)$ such that $a'_i \geq a_i$ for all $i \in J$ and $\eta' \neq \eta$, $C_{\zeta,h,\eta'}=0$ for all $\zeta \in \Omega, h \in P^{\vee}$. Then there is only one pair of $(a_1, \ldots, a_n)$ and $(j_1, \ldots, j_n)$ such that $(a_1-j_1, \ldots a_n-j_n)=\eta$ in \eqref{terms of degree (-beta,0)}. Repeating the above argument, we conclude $C_{\zz, h, \ee} =0
$ for all $\zeta \in \Omega, h \in P^{\vee}$.

For example, consider $\eta_2=(0,1,\ldots,1)$. Since $C_{\zeta,h, \eta_1}=0$, there is only one pair of $(a_1, \ldots, a_n)$ and $(j_1, \ldots, j_n)$ such that $(a_1-j_1, \ldots a_n-j_n)=(0,1,\ldots,1)$ in \eqref{terms of degree (-beta,0)}. Thus  we have
\begin{align*}
 &\sum_{\deg f_{\zz}=-\beta}   C_{\zz, h, \ee_2} \fz q^{h-\sum_{i=2}^n k_i} =0 \ \ \text{for all} \ h \in P^{\vee}.
 \end{align*}

Multiplying $q^{-h+\sum_{i=2}^n k_i}$ and using the linear
independence of $\fz$, we obtain $C_{\zz, h, \ee_2} =0 $ for all
$\zeta \in \Omega, \ h \in P^{\vee}$.

\end{proof}

We are now ready to prove the {\it triangular decomposition} for $U_q(\gg)$.
\begin{theorem} \label{tri decom}
  There is a $\mathbb{C}(q)$-linear isomorphism
  $$U_q(\gg) \cong U_q^{-} \ot U_q^{0} \otimes U_q^{+}.$$
\end{theorem}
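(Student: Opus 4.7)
The plan is to prove that the multiplication map $\mu \colon U_q^- \otimes U_q^0 \otimes U_q^+ \to U_q(\gg)$ is a $\mathbb{C}(q)$-linear isomorphism. By Lemma \ref{half decom} this is equivalent to proving that the multiplication $U_q^- \otimes U_q^{\geq 0} \to U_q(\gg)$ is bijective, so I mirror the strategy of that lemma. For surjectivity, I use the defining relations \eqref{defining relations of our Uq(q(n))} to put every monomial in the generators into the normal form $f_\zeta\, q^h\, k_\eta\, e_\tau$. The key observation is that each of the four products $e_i f_j$, $e_i f_{\bar j}$, $e_{\bar i} f_j$, $e_{\bar i} f_{\bar j}$ can be rewritten (up to explicit $q$-scalars) as $\pm f\,e$ plus an element of $U_q^0$. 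An induction on the number of pairs $(e,f)$ in a monomial with $e$ to the left of $f$ then pushes every $e$-generator past every $f$-generator, and Lemma \ref{half decom} reorganises the remaining $f$-, $h$-, and $k_{\bar l}$-factors.

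For injectivity, fix bases $\{e_\tau\}$ of $U_q^+$ and $\{f_\zeta\}$ of $U_q^-$ consisting of monomials, together with the basis $\{q^h k_\eta\}$ of $U_q^0$ from Lemma \ref{half decom}. Suppose $\sum C_{\zeta, h, \eta, \tau}\, f_\zeta q^h k_\eta e_\tau = 0$, split the sum by total $Q$-degree so each component vanishes independently, and for a fixed component choose a pair $(\beta, \gamma) \in Q_+ \times Q_+$ which is maximal componentwise among $(\beta_\zeta, \gamma_\tau)$ in the support, with $\beta_\zeta = -\deg f_\zeta$ and $\gamma_\tau = \deg e_\tau$. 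Apply $\Delta$ and extract the bidegree $(-\beta, \gamma)$ part. Direct inspection of the coproduct formulas listed in the excerpt shows that the $(-\beta_\zeta, 0)$-bidegree component of $\Delta(f_\zeta)$ equals $f_\zeta \otimes q^{h_\zeta}$ for some $h_\zeta \in P^\vee$, while the $(0, \gamma_\tau)$-component of $\Delta(e_\tau)$ equals $q^{-h'_\tau} \otimes e_\tau$. Moreover the right-bidegrees of $\Delta(f_\zeta)$, $\Delta(q^h)$, $\Delta(k_\eta)$ all lie in $-Q_+ \cup \{0\}$, and the right-bidegree of $\Delta(e_{\tau'})$ is at most $\gamma_{\tau'}$, so any tuple with $\gamma_{\tau'} < \gamma$ (which by the total-degree constraint forces $\beta_{\zeta'} < \beta$ as well) contributes zero to the bidegree $(-\beta, \gamma)$ component. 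Combined with the explicit $(0,0)$-bidegree piece of $\Delta(k_\eta)$ computed inside the proof of Lemma \ref{half decom}, this isolates a sum whose right tensor factor lies in $U_q^0 \cdot e_\tau$. Using the isomorphism $U_q^{\geq 0} \cong U_q^0 \otimes U_q^+$ from Lemma \ref{half decom} to separate the sum over $\tau$, the remaining relation is of exactly the shape treated inside Lemma \ref{half decom}, so each $C_{\zeta, h, \eta, \tau}$ with $(\beta_\zeta, \gamma_\tau) = (\beta, \gamma)$ vanishes. Descending induction on the maximal $(\beta, \gamma)$ in the support completes the proof.

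The main obstacle is the bookkeeping in the bidegree extraction, because $\Delta(f_{\bar i})$ and $\Delta(e_{\bar i})$ contain ``middle'' terms of mixed bidegree sign. However, once one observes that the right-bidegrees of $\Delta(f_\zeta)$, $\Delta(q^h)$, $\Delta(k_\eta)$ are uniformly non-positive, the maximality of $(\beta, \gamma)$ forces the only contributions to the $(-\beta, \gamma)$-component to come from tuples with $(\beta_{\zeta'}, \gamma_{\tau'}) = (\beta, \gamma)$, reducing the remaining computation to the framework already established in Lemma \ref{half decom}.
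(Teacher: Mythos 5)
Your proposal is correct and follows essentially the same route as the paper: reduce to the linear independence of the monomials $f_{\zeta}q^{h}k_{\eta}e_{\tau}$, apply the comultiplication, extract the extremal bidegree component using that the top right-degree term of $\Delta(e_{\tau})$ is $q^{-h}\otimes e_{\tau}$ and the bottom left-degree term of $\Delta(f_{\zeta})$ is $f_{\zeta}\otimes q^{h}$, and then fall back on Lemma \ref{half decom}. The only cosmetic difference is that you spell out the surjectivity (normal-form) step and the descending induction on the support, which the paper leaves implicit.
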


\begin{proof}
Let $\{\fz ~|~ \zz \in \Omega \}$ , $\{\qh k_{\ee} ~|~ h \in
P^{\vee}, ~ \ee \in \Omega^{'} \}$, and $\{e_{\tau} ~|~ \tau \in
\Omega \}$ be monomial bases of $U_q^{-}$, $U_q^{0}$ and $U_q^{+}$
respectively, where $\Omega$ and $\Omega'$ are the index sets as in the proof for Lemma \ref{half decom}. It suffices to show that the elements
$\fqke$ ($\zz, ~ \ta \in \Omega, ~ h \in P^{\vee}, \ee \in
\Omega^{'}$) are linearly independent over $\mathbb{C}(q)$.

  Suppose
  \begin{equation*}
    \begin{aligned}
      \sum_{\zz, h, \ee,  \ta} C_{\zz,  h, \ee, \ta} \fqke = 0 \ \ \ \ \text{for some } C_{\zz,  h, \ee,  \ta} \in \mathbb{C}(q).
    \end{aligned}
  \end{equation*}
The root space decomposition of $U_q(\gg)$ yields
   \begin{equation*}
    \begin{aligned}
      \sum_{ \substack { h,\  \ee, \\ \deg \fz +\deg \et =\gamma}} C_{\zz,  h, \ee,  \ta} \fqke = 0 \ \ \ \ \text{for all } \gamma \in Q .
    \end{aligned}
  \end{equation*}

  Using the partial ordering on $\mathfrak{h}_{\bar{0}}^*$, we can
choose $\alpha=\deg \fz$ and $\beta=\deg \et$, which are minimal and
maximal, respectively, among those for which $\alpha + \beta =\gamma$ and $C_{\zz,  h, \ee, \ta}$
is nonzero. If $\alpha= -\sum m_i \alpha_i$, set
$h_{\alpha}=\sum m_i k_{i+1}$, and if $\beta= \sum
n_i \alpha_i$, set $h_{\beta}=\sum n_i k_{i+1}$. The term
of degree $(0,\beta)$ in $\Delta(\et)$ is $q^{-h_{\beta}} \ot \et$
and the term of degree $(\alpha,0)$ of $\Delta(\fz)$ is $\fz \ot
q^{h_{\alpha}}$.

Since the terms of degree $(\alpha,\beta)$ of $\sum C_{\zz, h, \ee,\tau}
\Delta(f_{\zeta} q^h k_{\eta} e_{\tau})$ must sum to zero, we have
 \begin{equation*}
    \begin{aligned}
      \sum_{\substack { \deg \fz =\alpha, \\ \deg \et =\beta, \\h, \, \eta=(a_1, \cdots, a_n)}}
      \sum_{(j_1,\ldots,j_n) \in \Omega' \atop j_i \leq a_i, \ i \in J}  C_{\zz,  h, \ee,  \ta} \fz q^{h} \Big(\prod_{i=1}^{n}
      q^{-(a_i -j_i)k_i-h_{\beta}}k_{\bar{i}}^{j_i} \Big) \ot q^{h_{\alpha}+h} \Big( \prod_{i=1}^n q^{j_i k_i}k_{\bar{i}}^{a_i -j_i} \Big) \et
      =0.
     \end{aligned}
  \end{equation*}

The elements $\fz q^{h} \big(\prod_{i=1}^{n} k_{\bar{i}}^{j_i}\big)$
are linearly independent for $\zeta \in \Omega, \ h \in P^{\vee}, \ (j_1, \cdots, j_n)
\in  \Omega'$ by Lemma \ref{half decom}. By the similar argument in the proof for Lemma \ref{half decom},  we obtain
  \begin{align*}
      \sum_{\deg \et =\beta}  C_{\zz,  h, \ee,  \ta}
       \et =0 \ \ \text{for all} \ h \in P^{\vee}, \zeta \in \Omega, \ \text{and} \ \ee \in \Omega'.
  \end{align*}

Using the linear independence of $e_{\tau}$, we conclude that
$C_{\zz, h, \ee, \ta}=0$ for all $\zeta \in \Omega, h \in P^{\vee},
\eta \in \Omega', \ \text{and} \ \tau \in \Omega$, as desired.
\end{proof}

\vskip 5mm
%%%%%%%%%%%%%%%%%%%%%%%%%%%%%%%%%%%%%%%%%%%%%%%%%%%%%%%%%%%%%%%%%%%%%%%%%%%%%%%%%%%%%%%%%%%%%%%%%%%%%%%%%%%%%%%%%%

\section{The quantum Clifford Superalgebra $\Cliff_q(\lambda)$} \label{section_clifford}

We first introduce some notation that will be used in this section
only. Let $\K$ be a field of zero characteristic and $A$ be an
associative $\K$-algebra. Denote by ${\rm Mat}_ {n}(A)$ the
associative $\K$-algebra of $n \times n$ matrices with entries in
$A$. If $A$ is a superalgebra, then ${\rm Mat}_ {n}(A)$ is a
superalgebra as well by setting ${\rm Mat}_ {n}(A)_{\bar{i}} = {\rm
Mat}_ {n}(A_{\bar{i}})$. By ${\rm sMat}_{n | n}(\K)$ we denote the
associative superalgebra of $2n \times 2n$ matrices $\left(
\begin{array}{cc} A& B \\ C & D
\end{array} \right)$ , where $A$, $B$, $C$, and $D$ are in ${\rm Mat}_ {n}(\K)$ and
$${\rm sMat}_{n,n}(\K)_{\bar{0}} =  \left\{ \left( \begin{array}{cc} A& 0 \\ 0 & D
\end{array} \right) \right\}, \quad {\rm sMat}_{n,n}(\K)_{\bar{1}} =  \left\{ \left( \begin{array}{cc} 0& B \\ C & 0
\end{array} \right) \right\} .$$
Let $Q_n(\K)$ be the subsuperalgebra of ${\rm sMat}_{n | n}(\K)$ with elements $ \left( \begin{array}{cc} A& B \\ B & A
\end{array} \right)$. In particular, $Q_n(\K)_{\bar{0}} = Q_n(\K)_{\bar{1}} = {\rm Mat}_ {n}(\K)$. There are
$\K$-superalgebra isomorphisms
$$
{\rm Mat}_r \left( {\rm sMat}_{1 | 1} (\K )\right) \cong {\rm sMat}_{r | r} (\K ), \;
{\rm Mat}_r (Q_1 (\K)) \cong Q_r(\K).
$$
Note that if $\K = \C$, then the superalgebra $Q_n(\C)$ coincides
with $\gg$ as a complex vector space. Another example of a
$\K$-superalgebra is any extension $\K (\alpha)$ of $\K$ of degree
$2$ considering $\alpha$ as an odd element. If $\alpha^2 = \beta \in \K$ we will
denote $\K (\alpha)$ by $\K (\sqrt{\beta})$.

In this section, we set $\F = \C(q)$. For every $\lambda  \in P$ we
define $I^q(\lambda)$ to be the left ideal of $U_q^0$ generated by
$q^h - q^{\lambda(h)}1$, $h \in P^{\vee}$. Set
$\mbox{Cliff}_q(\lambda):= U_q^0/I^q(\lambda)$. We may consider
$\mbox{Cliff}_q(\lambda)$ as the associative $\F$-algebra generated
by the identity ${\bf 1} = 1 + I^q(\lambda)$ and
$t_{\bar{i}}:=k_{\bar{i}} + I^q(\lambda)$ satisfying the relations
$$t_{\bar{i}}t_{\bar{j}} + t_{\bar{j}}t_{\bar{i}} =
\delta_{ij}\dfrac{2(q^{2\lambda_i} - q^{-2
\lambda_i})}{q^2-q^{-2}}{\bf 1}, \; i,j = 1,...,n.$$ Furthermore,
$\mbox{Cliff}_q(\lambda)$ has an obvious $\Z_2$-grading (and thus a
superalgebra structure) by assuming that $t_{\bar{i}}$ are odd. More
precisely, ${\rm Cliff}_q (\lambda)_{\bar{0}}$ is spanned by ${\bf
1}$ and the monomials $t_{\bar{i}_1}...t_{\bar{i}_{2k}}$ of even
degree, while ${\rm Cliff}_q (\lambda)_{\bar{1}}$ is spanned by
those of odd degree. In this section we will describe the structure
of $\mbox{Cliff}_q(\lambda)$ and will classify its irreducible
modules. Because of its superalgebra structure,
$\mbox{Cliff}_q(\lambda)$ has both $\Z_2$-graded and nongraded
modules and both cases will be addressed.

The results in this section may be derived from more general
statements about quadratic forms and Clifford superalgebras over
arbitrary fields (see, for example, \cite{Lam} and \cite{Sh}). For the
sake of completeness we will give an outline of the proofs. The
results and the proofs in this section will also help us to describe
explicitly the action of  $U_q^0$ on the highest weight vectors of
an irreducible highest weight module over $U_q(\gg)$. This is
demonstrated in Example \ref{cliff_ex} for the case $n=3$ and
$\lambda = (4,2,1)$ .

In this section, we fix $V := \bigoplus_{i=1}^n \F t_{\bar{i}}$ and
$\Lambda := (\Lambda_1,..., \Lambda_n) \in \F^n$ and denote by
$B_{\Lambda} : V \times V \to \F$  the symmetric bilinear form
defined by $B_{\Lambda}  (t_{\bar{i}}, t_{\bar{j}}) = \delta_{ij}
\Lambda_i$. Let $\mbox{Cliff}_q(\Lambda)$ be the unique up to
isomorphism Clifford algebra associated to $V$ and $B_{\Lambda}$. If
$\Lambda_i = \dfrac{q^{2\lambda_i} - q^{-2 \lambda_i}}{q^2-q^{-2}}$,
then  we have $\mbox{Cliff}_q(\Lambda) \simeq \mbox{Cliff}_q
(\lambda)$.

Define $V(\Lambda):=V/\ker B_{\Lambda}$, where $\ker B_{\Lambda}:=
\{v \in V \; | \; B_{\Lambda}(v, u)= 0, \mbox{ for every }u \in V\}$
and denote by $\beta_{\Lambda}$ the restriction of $B_{\Lambda}$ on
$V(\Lambda)$.  Let  $N_{\Lambda}= \{ i \; | \; \Lambda_i \neq 0\}$,
$Z_{\Lambda}= \{ j \; | \; \Lambda_j = 0\}$,  and $ | \Lambda | = \#
N_{\Lambda}$.  Set $\Lambda_N := (\Lambda_{i_1},...,\Lambda_{i_{|
\Lambda |}}) $, $0_{Z} := (\Lambda_{j_1},...,\Lambda_{j_{n-|
\Lambda|}}) = (0,...,0) $,  where $N_{\Lambda} = \{i_1,...,i_{|
\Lambda |} \}$,   $Z_{\Lambda} = \{j_1,...,j_{n- | \Lambda |} \}$,
and $i_1<...<i_{| \Lambda |}$.  It is clear that $\ker B_{\Lambda} =
\bigoplus_{j \in Z_{\Lambda}} \F t_{\bar{j}}$ and that
$\mbox{Cliff}_q(\Lambda_N)  = \bigoplus_{i \in N_{\Lambda}} \F
t_{\bar{i}}$ is the Clifford algebra corresponding to $(V(\Lambda),
\beta_{\Lambda})$. Furthermore,
$$\mbox{Cliff}_q(\Lambda) \simeq \mbox{Cliff}_q(\Lambda_N)
\otimes_{\F} \mbox{Cliff}_q(0_{Z}) \simeq \mbox{Cliff}_q(\Lambda_N)
\otimes_{\F} \bigwedge \ker B_{\Lambda}. $$ Here $\bigwedge W$
denotes the exterior algebra of the vector space $W$. Thanks to the
above isomorphisms every $\mbox{Cliff}_q(\Lambda) $-module can be
considered as a $\mbox{Cliff}_q(\Lambda_N) $-module under the
embedding $\mbox{Cliff}_q(\Lambda_N) = \mbox{Cliff}_q(\Lambda_N)
\otimes_{\F} 1 \to \mbox{Cliff}_q(\Lambda_N) \otimes_{\F}
\mbox{Cliff}_q(0_{Z})$. The class  $\overline{\Delta(\Lambda)}$ of
$\Delta(\Lambda) = \Pi_{i \in N_{\Lambda}} \Lambda_i$ in $\dot{\F} /
\dot{ \F}^{2}$ is called the {\it discriminant} of $(V,
B_{\Lambda})$.

The following lemma is standard and the proof is left to the reader.

\begin{lemma} \label{lambda_zero}
Let $M$ be an irreducible ${\rm Cliff}_q(\Lambda)$-module. Then $M$
is an irreducible $\mbox{\rm Cliff}_q(\Lambda_N)$-module and
$t_{\bar{i}} v = 0$ for every $i \in Z_{\Lambda}$. Conversely, if
$M_0$ is an irreducible $\mbox{\rm Cliff}_q(\Lambda_N)$-module then
$M_0$ considered as a ${\rm Cliff}_q(\Lambda)$-module with trivial
action of ${\rm Cliff}_q(0_Z)$ is irreducible as well.

\end{lemma}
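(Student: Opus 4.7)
The plan is to exploit the tensor product decomposition $\mathrm{Cliff}_q(\Lambda) \simeq \mathrm{Cliff}_q(\Lambda_N) \otimes_{\F} \bigwedge \ker B_{\Lambda}$ already recorded in the text, together with the observation that the generators $t_{\bar i}$ with $i \in Z_{\Lambda}$ are very special: from the defining relations $t_{\bar i} t_{\bar j} + t_{\bar j} t_{\bar i} = \delta_{ij} 2 \Lambda_i \mathbf{1}$ they satisfy $t_{\bar i}^{2} = 0$ and they anticommute with every other $t_{\bar l}$. This is the one fact that drives both directions.

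For the forward direction, fix an irreducible $M$ over $\mathrm{Cliff}_q(\Lambda)$ and an index $i \in Z_{\Lambda}$. I would first show that the subspace $t_{\bar i} M \subset M$ is a $\mathrm{Cliff}_q(\Lambda)$-submodule: for $l \neq i$ we have $t_{\bar l}(t_{\bar i} v) = -t_{\bar i}(t_{\bar l} v) \in t_{\bar i} M$, while $t_{\bar i}(t_{\bar i} v) = t_{\bar i}^{2} v = 0 \in t_{\bar i} M$. By irreducibility, $t_{\bar i} M = 0$ or $t_{\bar i} M = M$. In the latter case, since $t_{\bar i}^{2}=0$, we have $t_{\bar i}(t_{\bar i} M)=0$, forcing $M = 0$, a contradiction. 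Hence $t_{\bar i}$ acts as zero for every $i \in Z_{\Lambda}$. Consequently the action of $\mathrm{Cliff}_q(\Lambda)$ factors through the quotient by the two-sided ideal generated by $\{t_{\bar i} : i \in Z_{\Lambda}\}$, which, using the tensor decomposition above, is naturally isomorphic to $\mathrm{Cliff}_q(\Lambda_N)$. Finally, any $\mathrm{Cliff}_q(\Lambda_N)$-submodule of $M$ is automatically a $\mathrm{Cliff}_q(\Lambda)$-submodule (because $t_{\bar i}$, $i \in Z_{\Lambda}$, acts by zero), so irreducibility over $\mathrm{Cliff}_q(\Lambda)$ transfers to irreducibility over $\mathrm{Cliff}_q(\Lambda_N)$.

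For the converse, I would take an irreducible $\mathrm{Cliff}_q(\Lambda_N)$-module $M_0$ and extend the action by declaring $t_{\bar i}$ to act as $0$ for $i \in Z_{\Lambda}$. The only thing to check is that all relations of $\mathrm{Cliff}_q(\Lambda)$ are respected: relations among the $t_{\bar j}$ with $j \in N_{\Lambda}$ hold by assumption, while any relation involving at least one $t_{\bar i}$, $i \in Z_{\Lambda}$, has its right-hand side $\delta_{ij} \cdot 0 \cdot \mathbf{1} = 0$, which matches the left-hand side (a product containing a factor of $0$). Irreducibility of the extended module is immediate, since any submodule is a fortiori a $\mathrm{Cliff}_q(\Lambda_N)$-submodule, hence $0$ or $M_0$.

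The only step requiring any thought is showing $t_{\bar i} M = 0$ for $i \in Z_{\Lambda}$; everything else is bookkeeping with the tensor decomposition. I do not anticipate a real obstacle, since the nilpotence $t_{\bar i}^{2}=0$ together with the anticommutation with the remaining generators is exactly what is needed. The only point that deserves mild care is making sure the quotient $\mathrm{Cliff}_q(\Lambda)/(t_{\bar i} : i \in Z_{\Lambda})$ is identified with $\mathrm{Cliff}_q(\Lambda_N)$, but this is transparent from the isomorphism $\mathrm{Cliff}_q(\Lambda) \simeq \mathrm{Cliff}_q(\Lambda_N) \otimes \bigwedge \ker B_{\Lambda}$ already established.
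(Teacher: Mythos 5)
Your proof is correct. The paper explicitly leaves this lemma to the reader as standard, and your argument is exactly the standard one: for $i \in Z_{\Lambda}$ the generator $t_{\bar i}$ is nilpotent ($t_{\bar i}^2 = \Lambda_i\mathbf{1} = 0$) and anticommutes with the remaining generators, so $t_{\bar i}M$ is a submodule annihilated by $t_{\bar i}$, forcing $t_{\bar i}M=0$ by irreducibility; the rest is the routine transfer of submodules across the identification $\mathrm{Cliff}_q(\Lambda)/(t_{\bar i}: i\in Z_{\Lambda}) \cong \mathrm{Cliff}_q(\Lambda_N)$.
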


Since our goal in this section is to classify the irreducible representations of
${\rm Cliff}_q(\Lambda)$, thanks to the above lemma, we may assume
that $\Lambda_i$ are nonzero. So, for simplicity {\it we fix
$Z_{\Lambda}= \emptyset$, and thus $B_{\Lambda} = \beta_{\Lambda}$ and
$V(\Lambda) = V$, in all statements preceding Corollary
\ref{cliff_mod}}.

Recall that a vector $v$ in $V$ is called $\beta_{\Lambda}-${\it
isotropic} (or simply {\it isotropic}) if $\beta_{\Lambda} (v,v) =
0$. A subspace $W$ of $V$ is $\beta_{\Lambda}-${\it isotropic
subspace} if $\beta_{\Lambda} (u,w) = 0$ for every $u$ and $w$ in $W$. A
subspace $W$ of $V$ is {\it anisotropic} if it contains no nonzero
$\beta_{\Lambda}-$isotropic vector. An isotropic subspace $W$ of $V$
is {\it maximal isotropic} if there is no larger
$\beta_{\Lambda}$-isotropic subspace containing $W$.

\begin{lemma} \label{Wittdec}
Let $W$ be an isotropic subspace of $V$. Then there exists an
isotropic subspace $W^*$ and a subspace $Z$ of $V$ such that
\begin{equation*}
\begin{aligned}
& V = Z \oplus W \oplus W^*, \quad \dim W = \dim W^*, \\
& \beta_{\Lambda}(z,w) = \beta_{\Lambda}(z,w^*) = 0 \ \ \text {for
every} \  z \in Z, w \in W, w^* \in W^*.
\end{aligned}
\end{equation*}
Moreover, there exist bases $\{ w_1,...,w_m \}$ and $\{
w_1^*,...,w_m^* \}$ of $W$ and $W^*$, respectively, such that
$\beta_{\Lambda}(w_i, w_j^*) = \delta_{ij}$.
\end{lemma}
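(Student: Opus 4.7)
The plan is to prove this standard Witt-type decomposition by induction on $m = \dim W$, using the fact that under the running hypothesis $Z_\Lambda = \emptyset$ the bilinear form $\beta_\Lambda$ is non-degenerate on $V$ (its Gram matrix in the basis $\{t_{\bar 1},\dots,t_{\bar n}\}$ is diagonal with nonzero entries $\Lambda_i$). The base case $m=0$ is trivial: take $W^\ast = 0$ and $Z = V$.

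For the inductive step, fix any basis $\{w_1,\dots,w_m\}$ of $W$. I first want to produce the single dual vector $w_1^\ast$. The idea is to find $v' \in V$ with
\[
\beta_\Lambda(w_1,v') = 1, \qquad \beta_\Lambda(w_j,v') = 0 \ \text{for } j = 2,\dots,m,
\]
and then correct it to be isotropic. The existence of such a $v'$ follows from non-degeneracy: the map $\phi \colon V \to \F^m$, $v \mapsto (\beta_\Lambda(w_i,v))_{i=1}^m$, has as its dual the map $\F^m \to V^\ast$, $(c_i) \mapsto \sum c_i\beta_\Lambda(w_i,\cdot)$, which is injective because $w_1,\dots,w_m$ are linearly independent and $\beta_\Lambda$ is non-degenerate; hence $\phi$ is surjective. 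Now set
\[
w_1^\ast := v' - \tfrac{1}{2}\beta_\Lambda(v',v')\,w_1.
\]
A direct check (using $\beta_\Lambda(w_1,w_1) = 0$ because $W$ is isotropic, and the fact that $\mathrm{char}\,\F = 0$) shows $\beta_\Lambda(w_1^\ast,w_1^\ast) = 0$, $\beta_\Lambda(w_1,w_1^\ast)=1$, and $\beta_\Lambda(w_j,w_1^\ast)=0$ for $j\ge 2$.

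Next I would reduce to a smaller problem. Set $H := \F w_1 \oplus \F w_1^\ast$; the Gram matrix of $\beta_\Lambda|_H$ is $\left(\begin{smallmatrix}0&1\\1&0\end{smallmatrix}\right)$, so $H$ is non-degenerate. Therefore $V = H \oplus V_0$ where $V_0 := H^\perp$ is non-degenerate with respect to $\beta_\Lambda|_{V_0}$. By construction $w_2,\dots,w_m \in V_0$, and $W_1 := \mathrm{span}\{w_2,\dots,w_m\} \subset V_0$ is still $\beta_\Lambda$-isotropic. Apply the induction hypothesis to $W_1 \subset V_0$ to obtain an isotropic subspace $W_1^\ast \subset V_0$ and a subspace $Z \subset V_0$ with dual bases $\{w_2,\dots,w_m\}$ and $\{w_2^\ast,\dots,w_m^\ast\}$ satisfying $\beta_\Lambda(w_i,w_j^\ast) = \delta_{ij}$ and $\beta_\Lambda(Z,W_1)=\beta_\Lambda(Z,W_1^\ast)=0$. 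Then $W^\ast := \F w_1^\ast \oplus W_1^\ast$ and $Z$ give the claimed decomposition; isotropy of $W^\ast$ follows from that of $w_1^\ast$, $W_1^\ast$, and the relations $\beta_\Lambda(w_1^\ast, V_0)=0$.

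The only genuinely delicate step is finding $w_1^\ast$ with the three simultaneous properties (dual to $w_1$, orthogonal to the other $w_j$, and isotropic); everything else is bookkeeping. This step is handled cleanly by the two-stage construction above: first produce $v'$ via the surjectivity of $\phi$, then use the quadratic correction along $w_1$ to kill the self-pairing, which is possible precisely because $w_1$ is itself isotropic and $2$ is invertible in $\F$.
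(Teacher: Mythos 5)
Your proof is correct and follows essentially the same route as the paper: induction on $\dim W$ with the quadratic correction $w_1^* = v' - \tfrac{1}{2}\beta_{\Lambda}(v',v')\,w_1$, which is exactly the formula the paper exhibits for the case $\dim W = 1$ before deferring the full induction to \cite[Lemma 1.3]{Sh}. You have merely filled in the details the paper omits (surjectivity of $v \mapsto (\beta_{\Lambda}(w_i,v))_i$ from non-degeneracy, and the splitting off of the hyperbolic plane $H$ before applying the induction hypothesis to $H^{\perp}$), and these are all sound.
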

\begin{proof} The lemma follows by induction on $\dim W$. If $\dim W = 1$,
then $W^*$ is spanned by $w_1^* = x - \frac{1}{2} \beta_{\Lambda}
(x,x) w_1$, where $x \in V$ is arbitrarily chosen so that
$\beta_{\Lambda} (w_1, x) = 1$. Then we define $Z$ to be
$$
Z = \{ z \in V \; | \;  \beta_{\Lambda} (z,w_1) =  \beta_{\Lambda} (z,w_1^*) = 0 \}.
$$
For the complete proof, see \cite[Lemma 1.3]{Sh}.
\end{proof}

The decomposition  $V =  Z \oplus W \oplus W^*$ in Lemma
\ref{Wittdec} is called a {\it weak Witt decomposition of $V$}. For
any weak Witt decomposition $V =  Z \oplus W \oplus W^*$, we denote by $\Cliff (\Lambda_Z)$ the Clifford algebra corresponding to $(Z, \beta_{\Lambda | Z})$. If $V = Z \oplus
W \oplus W^*$ is a weak Witt decomposition for which $Z$ is
anisotropic (or, equivalently, $W$ is maximal isotropic) we call it
a {\it Witt decomposition}.  We may  identify $W^*$ with the dual
space of $W$ via the nondegenerate form $\beta_{\Lambda}$. If $V = Z \oplus
W \oplus W^*$ is a  Witt decomposition, the
dimension of $W$ is an invariant of $(V, \beta_{\Lambda})$ (see
\cite[Lemma 1.4]{Sh}) and is known as the {\it Witt index} of the
form $\beta_{\Lambda}$.  We say that the Witt index is {\it maximal}
if $\dim Z \leq 1$. Recall that if the ground field is $\C$, the
Witt index is always maximal. In the case of arbitrary $\F$ though,
the Witt index is generally not maximal as we verify in Lemma
\ref{Wittindex}. In order to find a Witt decomposition and the Witt
index  of $(V, \beta_{\Lambda})$ we need some preparatory
statements.

\begin{lemma} \label{cliff_z}
Let $V =  Z \oplus W \oplus W^*$  be a weak Witt decomposition and
let $m = 2^{\dim W}$. Then ${\rm Cliff}_q(\Lambda) \cong {\rm Mat}_m
({\rm Cliff}_q(\Lambda_Z))$. Moreover, we have
\begin{equation*}
{\rm Cliff}_q(\Lambda)_{\bar{0}} \cong
\begin{cases}
{\rm Mat}_m ({\rm Cliff}_q(\Lambda_Z)_{\bar{0}}) \ \ & \text{if} \ Z
\neq 0, \\
{\rm Mat}_{m/2} ( \F ) \oplus {\rm Mat}_{m/2} ( \F ) \ \ & \text{if}
\ Z = 0.
\end{cases}
\end{equation*}
\end{lemma}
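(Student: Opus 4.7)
The plan is to argue by induction on $\dim W$. The base case $\dim W = 0$ forces $V = Z$ and $m = 1$, so the claim reduces to the tautology ${\rm Cliff}_q(\Lambda) = {\rm Cliff}_q(\Lambda_Z)$.

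For the inductive step I will pick dual vectors $w_1 \in W, w_1^* \in W^*$ with $\beta_\Lambda(w_1, w_1^*) = 1$ provided by Lemma \ref{Wittdec}, and let $V' := (\F w_1 \oplus \F w_1^*)^{\perp_{\beta_\Lambda}} \subset V$. Then $V'$ inherits a weak Witt decomposition $V' = Z \oplus W' \oplus W'^*$ with $\dim W' = \dim W - 1$. Since every element of $V'$ is odd and $\beta_\Lambda$-orthogonal to $w_1, w_1^*$, the two subalgebras ${\rm Cliff}_q(\Lambda|_{V'})$ and ${\rm Cliff}_q(\F w_1 \oplus \F w_1^*)$ super-anticommute inside ${\rm Cliff}_q(\Lambda)$, yielding a superalgebra isomorphism
\[
{\rm Cliff}_q(\Lambda) \;\cong\; {\rm Cliff}_q(\Lambda|_{V'}) \,\widehat{\otimes}\, {\rm Cliff}_q(\F w_1 \oplus \F w_1^*),
\]
where $\widehat{\otimes}$ denotes the $\Z_2$-graded tensor product. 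The rank-one hyperbolic factor on the right is identified with the superalgebra ${\rm sMat}_{1|1}(\F)$ by sending a rescaled pair of matrix units $E_{12}, E_{21}$ to $w_1, w_1^*$ and the diagonal idempotents to the two products $w_1 w_1^*$ and $w_1^* w_1$; this is directly verified from $w_1^2 = (w_1^*)^2 = 0$ and $w_1 w_1^* + w_1^* w_1 = 2$. Combining with the standard identity $A \,\widehat{\otimes}\, {\rm sMat}_{1|1}(\F) \cong {\rm sMat}_{1|1}(A)$ of superalgebras (valid for any superalgebra $A$ via a sign-twisted identification of the underlying vector spaces) and the inductive hypothesis ${\rm Cliff}_q(\Lambda|_{V'}) \cong {\rm Mat}_{m/2}({\rm Cliff}_q(\Lambda_Z))$, one deduces the first claim ${\rm Cliff}_q(\Lambda) \cong {\rm Mat}_m({\rm Cliff}_q(\Lambda_Z))$.

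To extract the even part, I unfold the $\dim W$ iterations of the above construction to obtain a \emph{graded} isomorphism ${\rm Cliff}_q(\Lambda) \cong {\rm sMat}_{m/2 \,|\, m/2}({\rm Cliff}_q(\Lambda_Z))$. When $Z = 0$ the coefficient algebra $\F$ is purely even, so this is the standard block supergrading on ${\rm Mat}_m(\F)$, whose even part is exactly the block diagonal subalgebra ${\rm Mat}_{m/2}(\F) \oplus {\rm Mat}_{m/2}(\F)$. When $Z \neq 0$, any $z \in Z$ with $\beta_\Lambda(z,z) \neq 0$ is an invertible odd element of ${\rm Cliff}_q(\Lambda_Z)$; conjugating the natural matrix units of ${\rm sMat}_{m/2 \,|\, m/2}({\rm Cliff}_q(\Lambda_Z))$ by the block-diagonal element $\mathrm{diag}(1, \ldots, 1, z, \ldots, z)$ with $m/2$ copies of each scalar carries the twisted supergrading to the naive entrywise grading, and the even part is thereby identified with ${\rm Mat}_m({\rm Cliff}_q(\Lambda_Z)_{\bar 0})$.

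The main obstacle I anticipate is the careful sign bookkeeping needed to establish the super tensor isomorphism $A \,\widehat{\otimes}\, {\rm sMat}_{1|1}(\F) \cong {\rm sMat}_{1|1}(A)$ and to carry the induced supergrading through the iteration, in particular verifying that the conjugation trick in the case $Z \neq 0$ produces an actual algebra isomorphism onto ${\rm Mat}_m({\rm Cliff}_q(\Lambda_Z)_{\bar 0})$ rather than merely an $\F$-linear bijection.
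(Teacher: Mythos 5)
Your proof is correct and follows essentially the same route as the paper's (which is itself a sketch deferring to Shimura): induction on $\dim W$ by splitting off a hyperbolic plane, realizing that plane as a $2\times 2$ matrix factor, and repairing the parity mismatch when $Z\neq 0$ by conjugating with a diagonal matrix built from an anisotropic odd element of $Z$. Your packaging of the inductive step as ${\rm Cliff}_q(\Lambda)\cong {\rm Cliff}_q(\Lambda|_{V'})\,\widehat{\otimes}\,{\rm sMat}_{1|1}(\F)$ is only a presentational variant of the paper's explicit map $z+rw_1+sw_1^*\mapsto\left(\begin{smallmatrix} z & r\\ s & -z\end{smallmatrix}\right)$ and its conjugation by $D=\mathrm{diag}(g,1)$.
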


\begin{proof}
For the complete proof, see \cite[Theorem 2.6]{Sh}. The proof
follows by induction on $\dim W$. We sketch the proof for $\dim W =
1$. In this case there is an isomorphism $\Psi : {\rm
Cliff}_q(\Lambda) \to {\rm Mat}_m ({\rm Cliff}_q(\Lambda_Z))$
defined by its restriction $\Psi_{| V}$ on $V$:
$$
z + rw_1 + sw_1^* \mapsto \left( \begin{array}{cc} z& r \\ s & -z
\end{array} \right).
$$
Notice that if $Z \neq 0$, $\Psi$ is not necessarily parity preserving. In such a case we choose the
isomorphism $\Theta :  {\rm Cliff}_q(\Lambda) \to {\rm Mat}_m ({\rm
Cliff}_q(\Lambda_Z))$ defined by $\Theta (\alpha) = D^{-1} \Psi
(\alpha) D$, where $D =  \left( \begin{array}{cc}  g&  0
\\ 0 & 1
\end{array} \right)$ and any $g \in Z$ with $\beta_{\Lambda}( g, g ) \neq 0$. \end{proof}

\begin{lemma} \label{q-legandre}
The nondegenerate Legendre's equation always has a nontrivial
solution in $\F${\rm :} for every nonzero $A,B$, $C$ in $\F$, there
exist $X,Y,Z \in \F$ with $(X,Y,Z) \neq (0,0,0)$ such that $AX^2 +
BY^2 + CZ^2=0$.
\end{lemma}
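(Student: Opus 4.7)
The plan is to recognize this as an instance of Tsen's theorem: the rational function field $\F = \C(q)$ over the algebraically closed field $\C$ is a $C_1$ field, so every homogeneous form of degree $d$ in strictly more than $d$ variables admits a nontrivial zero. Our form has degree $2$ in $3$ variables, and the conclusion follows. For self-containedness I would sketch the underlying dimension-counting argument, which is elementary.

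First clear denominators to assume $A, B, C \in \C[q]$, and set $d = \max(\deg A, \deg B, \deg C)$. I look for a nonzero solution $(X, Y, Z) \in \C[q]^3$ in which each of $X, Y, Z$ has degree at most $N$, for an integer $N$ to be chosen. Writing $X = \sum_{i=0}^N x_i q^i$ and similarly for $Y, Z$, and expanding $AX^2 + BY^2 + CZ^2$ as a polynomial in $q$, the requirement that the coefficient of $q^k$ vanish for each $k = 0, 1, \ldots, 2N + d$ produces a system of at most $m = 2N + d + 1$ homogeneous quadratic equations in the $n = 3(N+1)$ unknowns $(x_0, \ldots, x_N, y_0, \ldots, y_N, z_0, \ldots, z_N) \in \C^{n}$.

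Now I choose $N \geq d - 1$, so that $n = 3N + 3 > 2N + d + 1 = m$. Since $\C$ is algebraically closed, the projective dimension theorem guarantees that the intersection of $m$ hypersurfaces in $\P^{n-1}_{\C}$ has dimension at least $n - 1 - m \geq 0$, and in particular is nonempty. A point in this intersection is a nonzero tuple of coefficients, which assembles into a nonzero triple $(X, Y, Z) \in \C[q]^3$ satisfying $AX^2 + BY^2 + CZ^2 = 0$. There is no substantive obstacle here; the only thing that needs care is the comparison of the numbers of variables and equations, which works out because the form is quadratic and the number of variables grows linearly in $N$ with slope $3$ while the number of equations grows with slope $2$.
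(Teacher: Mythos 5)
Your proof is correct, but it takes a genuinely different route from the paper --- indeed it is exactly the algebro-geometric alternative that the paper mentions (and declines to carry out) in the Remark immediately following the lemma, where the Tsen--Lang theorem and Harris's Exercise 11.6 are cited. Your dimension count is right: with $X,Y,Z$ of degree at most $N$ you have $n=3(N+1)$ projective coordinates subject to $m=2N+d+1$ homogeneous quadratic conditions, and $N\geq d-1$ makes $n-1-m=N+1-d\geq 0$, so the intersection of the corresponding hypersurfaces in $\P^{3N+2}$ is nonempty over the algebraically closed field $\C$; the one pedantic point is that a condition whose quadratic form vanishes identically is not a hypersurface, but then it imposes no constraint, so the bound only improves. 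The paper's own proof instead adapts the classical proof of Legendre's theorem: after normalizing $C=-1$ and reducing to square-free $A,B\in\C[q]$, it shows via the Chinese Remainder Theorem that every polynomial is a quadratic residue modulo a square-free polynomial, writes $C_1^2-B=AA_1M^2$ with $\deg A_1<\deg A$, and descends on $\max\{\deg A,\deg B\}$. What each approach buys: yours is shorter and generalizes at once to any quadratic form in at least three variables over $\C(q)$ (the $C_1$ property), while the paper's is an effective algorithm --- and this matters downstream, since Example \ref{cliff_ex} explicitly follows the steps of that constructive proof to produce a concrete solution of $\Lambda_1X^2+\Lambda_2Y^2+\Lambda_3Z^2=0$ for $\lambda=(4,2,1)$ and hence an explicit description of the action on $E^q(\lambda)$, which a pure existence argument does not supply.
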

\begin{proof}
We modify the proof of the classical Legendre's Theorem (see, for
example, \cite[\S 17.3]{IR}). We first assume that $A,B,C,X,Y,Z$ are
polynomials in $\C[q]$, where $A,B$, $C$ are square free. We
may fix $C=-1$, since if $(X,Y,Z)$ is a solution of $ACX^2 + BCY^2 =
Z^2$ then $(X,Y,\sqrt{-1}\frac{Z}{C})$ is a solution of $AX^2 + BY^2
+ CZ^2=0$. We prove that  $AX^2 + BY^2 = Z^2$ has a nontrivial solution by
induction on $N:=\max \{ \deg A, \deg B \}$.

If $N=0$; i.e., $A$ and $B$ are constant polynomials, then $AX^2 +
BY^2 = Z^2$ has a solution (constant polynomials). Assume that $\deg
B \leq \deg A$ and $\deg A \geq 1$. Recall that every polynomial $R
\in \C[q]$ is a quadratic residue modulo any square free
polynomial $S$. Indeed, if $S$ is constant, our assertion is obvious.
Otherwise, let $S (q) = \Pi_{i=1}^r (q-z_i)$ with $z_i\neq z_j$, and
let $y_i \in \C$ be such that $y_i^2 = R(z_i)$. Then $y_i^2 \equiv R
\, ( \mbox{mod}\, (q-z_i))$. Using the Chinese Remainder Theorem, we
find $y \in \C[q]$ for which $y \equiv y_i  \,( \mbox{mod}\,
(q-z_i))$. But then $y^2 \equiv R  \,( \mbox{mod}\, (q-z_i))$ and
thus $y^2 \equiv R \, ( \mbox{mod}\, S)$.

We fix $C_1$ with $\deg C_1 < \deg A$ such that $C_1^2 \equiv B \,
(\, \mbox{mod} A)$. Then $C_1^2 - B = AT = AA_1M^2$ for some square
free polynomial $A_1$. Since $\deg A + \deg A_1 \leq \deg(A A_1 M^2)
= \deg (C_1^2 - B) < 2 \deg A$, we have $0 \leq \deg A_1 < \deg A$.
Now we observe that if $(X_1, Y_1, Z_1)$ is a solution of $A_1X^2 +
BY^2 = Z^2$, then $(A_1X_1M, C_1Y_1+Z_1, Z_1C_1 + BY_1)$ is a
solution of $A X^2 + BY^2 = Z^2$. Using the induction hypothesis, we
complete the proof.
\end{proof}

\begin{remark} Lemma \ref{q-legandre} may be proved with a standard
algebro-geometric argument using dimensions, see, for example,
\cite[Exercise 11.6]{Har}. The lemma is also a particular case of
the following Theorem of Tsen-Lang: if $K$ is a field of
transcendence degree $n$ over an algebraically closed field $k$,
then any quadratic form over $K$ of dimension bigger than $2^n$ is
isotropic. For details, see \cite[Chapter XI]{Lam}.
\end{remark}

{\it In what follows, we assume  $\Lambda_i = \dfrac{q^{2\lambda_i} -
q^{-2 \lambda_i}}{q^2-q^{-2}}$}. For simplicity, we will write $
\beta_{\lambda}$, $|\lambda|$, and $\Delta(\lambda)$ for
$\beta_{\Lambda}$, $|\Lambda|$,  and $\Delta (\Lambda)$,
respectively.
The following technical lemma can be easily verified.

\begin{lemma}
Define an equivalence relation $\sim$ in $\{ \lambda_i \; | \; i
=1,...,n\}$ by $\lambda_i \sim \lambda_j$ if $\lambda_i^2 =
\lambda_j^2$ and denote by $o(\lambda_i)$ the orbit of $\lambda_i$
relative to $\sim$. Then $\overline{\Delta(\lambda)} =\bar{1}$ {\rm
(}or, equivalently, $\Delta (\lambda)$ is a square in $\F${\rm )} if
and only if the orbit $o(\lambda_i)$ of every $\lambda_i \neq \pm 1$
contains even number of elements.

\end{lemma}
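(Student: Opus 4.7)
The plan is to compute the class of $\Delta(\lambda) = \prod_{i \in N_{\lambda}} \Lambda_i$ in $\dot\F/\dot\F^2$ in two stages: first I would fold each $\sim$-orbit using the identity $\Lambda_j = \pm \Lambda_i$ whenever $\lambda_j = \pm \lambda_i$, and then I would analyze the surviving factors via a cyclotomic factorization. Throughout I would exploit the fact that $-1 = (\sqrt{-1})^{2}$ is a square in $\dot\F$, since $\sqrt{-1} \in \C \subset \F$.

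First I would observe that if $\lambda_i$ and $\lambda_j$ lie in the same $\sim$-orbit, then $\Lambda_i = \pm \Lambda_j$, so $\Lambda_i \Lambda_j = \pm \Lambda_i^{\,2} \in \dot\F^2$. Pairing elements of each orbit two at a time therefore gives
\[
\Delta(\lambda) \;\equiv\; \prod_{o}\, \Lambda(a_o)^{|o|\bmod 2} \pmod{\dot\F^2},
\]
where $o$ ranges over the orbits of $\sim$ and $a_o := |\lambda_i|$ for any $i \in o$. Since $\Lambda(1) = 1$ is trivially a square, the orbit containing $\pm 1$ may be dropped. It thus suffices to prove: for distinct integers $a_1 < a_2 < \cdots < a_k$ with each $a_j \ge 2$, the product $\prod_{j=1}^{k} \Lambda(a_j)$ is a square in $\dot\F$ if and only if $k = 0$.

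For this remaining step I would use the identity
\[
\Lambda(a) \;=\; q^{-2(a-1)} \cdot \frac{q^{4a}-1}{q^4-1},
\]
in which $q^{-2(a-1)} = (q^{-(a-1)})^{2}$ is already a square, to reduce modulo squares to $\Lambda(a) \equiv (q^{4a}-1)/(q^4-1) \pmod{\dot\F^2}$. Since $\C$ is algebraically closed, the squares in $\C(q)^{\times}$ are exactly the rational functions whose roots and poles all have even multiplicity. Let $\zeta$ be a primitive $4a_k$-th root of unity. Then $(q-\zeta)$ divides $q^{4a_j}-1$ iff $4a_k \mid 4a_j$, i.e.\ iff $a_k \mid a_j$; for $j \le k$ this forces $j = k$. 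Since $a_k \ge 2$, $(q-\zeta)$ does not divide $q^4-1$. Hence $(q-\zeta)$ appears with total multiplicity exactly $1$ in $\prod_{j=1}^{k} (q^{4a_j}-1)/(q^4-1)$, so this product is not a square unless $k = 0$.

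The bulk of the argument is bookkeeping; the real technical point is the last step, where one must exhibit a single irreducible of $\C[q]$ witnessing odd multiplicity in $\prod_{j=1}^{k} (q^{4a_j}-1)/(q^4-1)$. The hard part is selecting the right witness—the primitive $4a_k$-th roots of unity work precisely because $a_k$ is maximal among the $a_j$'s, which prevents cancellations with the smaller indices or with the denominator $q^4-1$. Once this witness is in hand, the rest amounts to tracking the signs absorbed by $\sqrt{-1} \in \F^{\times}$ and discarding the trivial contribution of the $\pm 1$ orbit.
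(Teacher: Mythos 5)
Your proof is correct and complete. The paper does not actually supply an argument for this lemma (it is dismissed with ``can be easily verified''), so there is nothing to compare against; your write-up fills that gap. The two key points both check out: since $\Lambda(-a)=-\Lambda(a)$ and $-1=(\sqrt{-1})^2\in\dot{\F}^2$, each $\sim$-orbit contributes $\Lambda(a_o)^{|o|\bmod 2}$ modulo squares (with the $\pm1$ orbit contributing $\Lambda(1)=1$); and the identity $\Lambda(a)=q^{-2(a-1)}(q^{4a}-1)/(q^4-1)$ together with the fact that a rational function over $\C$ is a square iff all its zero/pole orders are even lets the primitive $4a_k$-th root of unity detect odd multiplicity whenever the surviving product is nonempty, precisely because $a_k$ is maximal and $a_k\ge 2$. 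One small caveat worth making explicit: for the pairing step, the orbits must be read as multisets of indices (or one must note that repeated values $\lambda_i=\lambda_j$ each contribute a factor), which is the interpretation under which the lemma is true.
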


\begin{lemma} \label{Wittindex}
The space $V$ is anisotropic if and only if $\dim V = 1$ or   $\dim
V = 2$ and $\overline{\Delta (\lambda)} \neq \overline{1}$.  If  $V$
is isotropic, there is a Witt decomposition $V = Z \oplus W \oplus
W^*$ of $V$ such that
\begin{itemize}
\item[{\rm (1)}] $\dim W = k$ if $\dim V = 2k+1$, $k\geq 1$ {\rm (}maximal Witt index{\rm )};

\item[{\rm (2)}]  $\dim W = k-1$ if $\dim V= 2k$ and  $\overline{\Delta (\lambda)}  \neq \overline{1}$;

\item[{\rm (3)}] $\dim W = k$ if $\dim V = 2k$ and $\overline{\Delta (\lambda)} = \overline{1}$  {\rm (}maximal Witt index{\rm )}.
\end{itemize}
In particular, if $\lambda_1 > \lambda_2 >...>\lambda_n >0$, then
$\dim W = \left[ \frac{n-1}{2}\right]$.
\end{lemma}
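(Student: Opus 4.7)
The plan is to classify the low-dimensional anisotropic spaces, iteratively peel off hyperbolic planes via Lemma~\ref{Wittdec}, and then read off $\dim W$ from the discriminant invariant. First I would establish the anisotropic classification: $(V, \beta_\lambda)$ is anisotropic iff $\dim V = 1$, or $\dim V = 2$ with $\overline{\Delta(\lambda)} \neq \overline{1}$. The case $\dim V = 1$ is immediate since $\Lambda_1 \neq 0$. For $\dim V = 2$, an isotropic vector $x t_{\bar 1} + y t_{\bar 2}$ exists iff $\Lambda_1 x^2 + \Lambda_2 y^2 = 0$ has a nontrivial solution, equivalently iff $-\Lambda_1 \Lambda_2$ is a square in $\F$; since $\sqrt{-1} \in \C \subset \F$, this is the same as $\overline{\Delta(\lambda)} = \overline{1}$. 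For $\dim V \geq 3$, Lemma~\ref{q-legandre} produces an isotropic vector directly.

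Assuming $V$ is isotropic, I would iterate Lemma~\ref{Wittdec}: choose an isotropic $v$, set $W_0 = \F v$, split off a hyperbolic plane $W_0 \oplus W_0^*$ with orthogonal complement $Z_1$, and repeat on $Z_1$ as long as it contains an isotropic vector. This yields a Witt decomposition
\[
V = Z \oplus \bigoplus_{i=1}^m (W_i \oplus W_i^*), \qquad W := \bigoplus_{i=1}^m W_i,
\]
with $Z$ anisotropic. To pin down $m$, I would compute the discriminant class of $\beta_\lambda$ with respect to a basis adapted to this decomposition: each hyperbolic plane contributes a Gram block $\left(\begin{smallmatrix} 0 & 1 \\ 1 & 0 \end{smallmatrix}\right)$ of determinant $-1$, and since $-1$ is a square in $\F = \C(q)$, taking determinants modulo squares gives $\overline{\Delta(\lambda)} = \overline{\det \beta_\lambda|_Z}$. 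Combined with the anisotropic classification from the first step, this forces $\dim Z = 1$ when $\dim V = 2k+1$, $\dim Z = 0$ when $\dim V = 2k$ and $\overline{\Delta(\lambda)} = \overline{1}$, and $\dim Z = 2$ when $\dim V = 2k$ and $\overline{\Delta(\lambda)} \neq \overline{1}$, which gives the three claimed values of $m = \dim W$.

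For the final assertion, when $\lambda_1 > \dots > \lambda_n > 0$ the orbits under $\lambda_i \sim \lambda_j \Leftrightarrow \lambda_i^2 = \lambda_j^2$ are all singletons, and $\lambda_1 \geq 2$ whenever $n \geq 2$, so the preceding technical lemma forces $\overline{\Delta(\lambda)} \neq \overline{1}$; substituting this into the three cases uniformly yields $\dim W = [(n-1)/2]$ (and the case $n=1$ is trivial). The main obstacle is the discriminant bookkeeping under the hyperbolic reduction, in particular verifying that $-1$ is a square in $\F$ so that $\overline{\Delta(\lambda)}$ descends to an invariant of the anisotropic kernel $Z$; once this is in place, matching parities and discriminant classes between $V$ and $Z$ is routine.
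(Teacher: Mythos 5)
Your proposal is correct and follows the same skeleton as the paper's proof: classify the anisotropic cases in dimensions $1$ and $2$, use Lemma \ref{q-legandre} to produce isotropic vectors once $\dim V \geq 3$, peel off hyperbolic planes via Lemma \ref{Wittdec}, and then pin down $\dim Z$ from the discriminant together with the anisotropic classification. The one genuine difference is in how the discriminant is carried through the recursion. The paper's Step 3 computes an explicit residual vector $z$ orthogonal to the split-off hyperbolic plane with $\beta_{\lambda}(z,z)=\Lambda_1\Lambda_2\Lambda_3$ (and $\beta_{\lambda}(v_i,v_i)=\Lambda_i$ for $i\geq 4$), so that after each peeling the remaining form is again diagonal with the product of diagonal entries unchanged modulo squares; you instead invoke the basis-invariance of the Gram determinant modulo squares and the fact that each hyperbolic block has determinant $-1$, which is a square in $\C(q)$. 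Both arguments are valid and your version is the cleaner, more standard one from quadratic form theory; what the paper's explicit computation buys is the concrete change of basis that is reused later, in Example \ref{cliff_ex} and in the proof of Proposition \ref{cliff_structure}, which cites Step 3 directly. Your handling of the final assertion (all $\sim$-orbits are singletons and some $\lambda_i\neq\pm1$ once $n\geq2$, forcing $\overline{\Delta(\lambda)}\neq\overline{1}$) is also correct.
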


\begin{proof}
The proof consists of several steps.

{\it Step 1:  The case $\dim V = 1$}. This case is straightforward.

{\it Step 2:  The case $\dim V = 2$}. In this case,
$v = a_1 t_{\bar{1}} + a_2
t_{\bar{2}}$  is $\beta_{\lambda}$-isotropic if and only if $a_1^2 \Lambda_1 + a_2^2 \Lambda_2 =
0.$ The latter equation has a solution for $a_1$ and $a_2$ if and
only if $\frac{\Lambda_1}{\Lambda_2}$ is a square (or equivalently,
$\Lambda_1 \Lambda_2$ is a square).

{\it Step 3: If $\dim V  \geq 3$, then
$$
V \cong \F w \oplus \F w^* \oplus \F v_3 \oplus ... \oplus \F v_n,
$$
where
\begin{equation*}
\begin{aligned}
& \beta_{\lambda} (w, w)  = \beta_{\lambda} (w^*, w^*)  =
\beta_{\lambda} (w,v_i)  = \beta_{\lambda} (w^*,v_i)=  0 \ \
\text{for} \ i \geq 3, \\
& \beta_{\lambda} (w, w^*)  = 1, \  \ \beta_{\lambda} (v_3, v_3) =
\Lambda_1 \Lambda_2 \Lambda_3, \ \ \beta_{\lambda} (v_i, v_i) =
\Lambda_i \ \ \text {if} \ i \geq 4.
\end{aligned}
\end{equation*}}

Let us first consider the case $\dim V = 3$. We use Lemma
\ref{q-legandre} to find $w = x_1 t_{\bar{1}} + x_2 t_{\bar{2}} +
x_3 t_{\bar{3}}$ such that $\beta_{\lambda} (w,w)  = 0$. Applying
Lemma \ref{Wittdec} to $W = \F w$, we find $w^* = y_1 t_{\bar{1}} +
y_2 t_{\bar{2}} +  y_3 t_{\bar{3}}$ and $z = z_1 t_{\bar{1}} + z_2
t_{\bar{2}} +  z_3 t_{\bar{3}}$ such that
$$\beta_{\lambda}
(w^*,w^*)  =  \beta_{\lambda} (w^*,z)  = \beta_{\lambda} (w,z)  = 0,
\ \ \beta_{\lambda} (w,w^*)  = 1.$$
The choice of $z$ is unique up
to a multiplication by a nonzero constant in $\F$. A simple
calculation shows that $z_i$ may be chosen as follows
\begin{eqnarray*}
z_1 & = &  \sqrt{-1}\Lambda_2 \Lambda_3 (x_2y_3 - x_3 y_2), \\
z_2 & = & \sqrt{-1}\Lambda_1 \Lambda_3 (x_3y_1 - x_1 y_3), \\
z_3 & = & \sqrt{-1}\Lambda_1 \Lambda_2 (x_1y_2 - x_2 y_1).
\end{eqnarray*}
Then one can easily verify that $\beta_{\lambda} (z,z) = \Lambda_1
\Lambda_2 \Lambda_3$.

In the case $\dim V  > 3$, write $V = \F t_{\bar{1}} \oplus \F
t_{\bar{2}} \oplus \F t_{\bar{3}} \oplus \left( \bigoplus_{i \geq 4}
\F t_{\bar{i}}\right)$. Fix $w,w^*,z \in \F t_{\bar{1}} \oplus \F
t_{\bar{2}} \oplus \F t_{\bar{3}}$ as above, and set $v_3 = z$ and
$v_i = t_{\bar{i}}$ for $i\geq 4$.

{\it Step 4: If $\dim V \geq 3$, then $V$ has a Witt decomposition
$$
V \cong Z \oplus W \oplus W^*,
$$
where
\begin{equation*}
\dim Z = \begin{cases} 0 \ \ & \text {if $\dim V $ is even and
$\Lambda_1\Lambda_2...\Lambda_n$ is a square}, \\
1 \ \ & \text {if $\dim V $ is odd}, \\
2 \ \ & \text {if $\dim V $ is even and
$\Lambda_1\Lambda_2...\Lambda_n$ is not a square.}
\end{cases}
\end{equation*} }

This follows from an inductive argument using Step 1, Step 2, and
Step 3.
\end{proof}

\begin{lemma} \hfill \label{low_cliff}
\begin{itemize}
\item[\rm{(1)}] Assume that $\dim V = 1$. Then
\begin{equation*}
{\rm Cliff}_q(\lambda) \cong
\begin{cases}
Q_{1}(\F) \ \ & \text {if $\overline{\Delta(\lambda)} = \bar{1}$
{\rm (}equivalently, $\Lambda_1$ is a square in $\F${\rm )}}, \\
\F (\sqrt{\Lambda_1}) \ \ & \text {if $\overline{\Delta(\lambda)}
\neq \bar{1}$ {\rm (}equivalently, $\Lambda_1$ is not a square in
$\F${\rm )}}.
\end{cases}
\end{equation*}

%$\bullet$ if $\overline{\Delta(\lambda)} = \bar{1}$ {\rm (}or,
%equivalently, $\Lambda_1$ is a square in $\F${\rm )}, ${\rm
%Cliff}_q(\lambda) \cong Q_1 (\F)$;
%$\bullet$ if $\overline{\Delta(\lambda)} \neq \bar{1}$ {\rm (}or, equivalently, $\Lambda_1$ is not a square in $\F${\rm )}, ${\rm Cliff}_q(\lambda) \cong \F (\sqrt{\Lambda_1})$.

\item[\rm{(2)}] Assume that $\dim V  = 2$. Then
${\rm Cliff}_q(\lambda) \cong {\rm Mat}_2 (\F)$
as {\rm (}nongraded{\rm )} algebras and
${\rm Cliff}_q(\lambda)_{\bar{0}} \cong {\rm Cliff}_q(\Lambda_1 \Lambda_2)$.
\end{itemize}
\end{lemma}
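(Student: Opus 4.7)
My plan is to handle the three assertions by direct constructions, using Lemma~\ref{q-legandre} as the key solvability input for (2)(a).

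For part (1), $\Cliff_q(\lambda)$ is the two-dimensional superalgebra with basis $\{{\bf 1}, t_{\bar{1}}\}$ subject to $t_{\bar{1}}^2 = \Lambda_1\cdot{\bf 1}$. If $\Lambda_1 = \alpha^2$ is a square in $\F$, the assignment ${\bf 1}\mapsto I$, $t_{\bar{1}}\mapsto \alpha\left(\begin{array}{cc}0 & 1 \\ 1 & 0\end{array}\right)$ preserves the defining relation and respects parity, giving the asserted superalgebra isomorphism onto $Q_1(\F)$. If $\Lambda_1$ is not a square, then by definition $\Cliff_q(\lambda) = \F[x]/(x^2 - \Lambda_1) = \F(\sqrt{\Lambda_1})$.

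For part (2)(a), I will construct an explicit algebra homomorphism $\phi\colon \Cliff_q(\lambda)\to{\rm Mat}_2(\F)$. Set $M_1 := \left(\begin{array}{cc} 0 & \Lambda_1 \\ 1 & 0 \end{array}\right)$ so that $M_1^2 = \Lambda_1 I$, and look for $M_2$ of the form $\left(\begin{array}{cc} a & -\Lambda_1 c \\ c & -a \end{array}\right)$; any matrix of this shape automatically anticommutes with $M_1$, and the remaining condition $M_2^2 = \Lambda_2 I$ reduces to $a^2 - \Lambda_1 c^2 = \Lambda_2$. To solve this I apply Lemma~\ref{q-legandre} to the ternary form $X^2 - \Lambda_1 Y^2 - \Lambda_2 Z^2$, obtaining a nontrivial solution $(X_0, Y_0, Z_0)$: if $Z_0\neq 0$, take $(a, c) = (X_0/Z_0, Y_0/Z_0)$; if $Z_0 = 0$, then $\Lambda_1 = (X_0/Y_0)^2 =: \gamma^2$ is forced to be a square and the factorization $a^2 - \Lambda_1 c^2 = (a-\gamma c)(a+\gamma c)$ yields the explicit solution $a = (1+\Lambda_2)/2$, $c = (\Lambda_2-1)/(2\gamma)$. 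Defining $\phi(t_{\bar{i}}) := M_i$ thus gives a well-defined algebra map, and a direct determinant computation in the basis $\{I, e_{11}-e_{22}, e_{12}, e_{21}\}$ of ${\rm Mat}_2(\F)$ shows that the coordinate matrix of $\{I, M_1, M_2, M_1 M_2\}$ has determinant a nonzero multiple of $\Lambda_1\Lambda_2$, so $\phi$ is surjective and, by comparing dimensions, an isomorphism.

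For part (2)(b), the even part $\Cliff_q(\lambda)_{\bar{0}}$ has basis $\{{\bf 1}, t_{\bar{1}}t_{\bar{2}}\}$ with $(t_{\bar{1}}t_{\bar{2}})^2 = -\Lambda_1\Lambda_2$. Since $\sqrt{-1}\in\C\subset\F$, the map sending the odd generator $s$ of $\Cliff_q(\Lambda_1\Lambda_2)$ to $\sqrt{-1}\cdot t_{\bar{1}}t_{\bar{2}}$ satisfies $(\sqrt{-1}\cdot t_{\bar{1}}t_{\bar{2}})^2 = \Lambda_1\Lambda_2 = s^2$, yielding the required algebra isomorphism onto $\Cliff_q(\lambda)_{\bar{0}}$. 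The principal obstacle is part (2)(a) in the anisotropic case ($\Lambda_1\Lambda_2$ not a square), where $V$ has no nonzero isotropic vector and Lemma~\ref{cliff_z} does not directly apply; the nongraded splitting $\Cliff_q(\lambda)\cong{\rm Mat}_2(\F)$ is then forced by Lemma~\ref{q-legandre}, which plays the role of Tsen's theorem for $\F = \C(q)$ and splits the quaternion algebra $(\Lambda_1,\Lambda_2)_\F$.
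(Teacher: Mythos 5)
Your proof is correct, and for part (2) it takes a more constructive route than the paper. The paper's argument for (2) is abstract: it observes that $A={\rm Cliff}_q(\lambda)$ is a quaternion algebra over $\F$, asserts (citing Lam) that it is not a division algebra, and concludes $A\cong{\rm Mat}_2(\F)$ by Wedderburn's theorem; parts (1) and (2)(b) are dismissed as ``easily verified'' and ``straightforward,'' exactly as you treat them. You instead split the quaternion algebra by hand, exhibiting matrices $M_1,M_2$ with $M_1^2=\Lambda_1 I$, $M_2^2=\Lambda_2 I$, $M_1M_2=-M_2M_1$, and checking surjectivity via the determinant computation (which indeed evaluates to $-2\Lambda_1\Lambda_2\neq 0$, valid since the standing assumption $Z_\Lambda=\emptyset$ forces $\Lambda_1,\Lambda_2\neq 0$). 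The mathematical content is the same in both versions --- the solvability of Legendre's equation over $\C(q)$ (Lemma~\ref{q-legandre}, i.e.\ Tsen's theorem) is what forces the quaternion algebra to split --- but the paper leaves this input implicit inside the citation of Lam, whereas you make it the explicit engine of the construction, including the correct degenerate-case analysis when the Legendre solution has $Z_0=0$ (which forces $\Lambda_1$ to be a square and lets you solve $a^2-\Lambda_1c^2=\Lambda_2$ by factoring). Your version is longer but self-contained and avoids Wedderburn; the paper's is shorter but outsources the splitting criterion. Both are sound.
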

\begin{proof}
The case (1) corresponds to the ``classical case'' (Clifford
superalgebra over $\C$) and can be easily verified.

(2) Let $A = {\rm Cliff}_q(\lambda)$. Then $A$ is a quaternion
algebra over $\F$. Since it is not a division algebra, by
Wedderburn's Theorem, we have $A \cong {\rm Mat}_2 (\F)$ (see
\cite[Theorem 2.7]{Lam} for details). The isomorphism $A_{\bar{0}}
\cong {\rm Cliff}_q(\Lambda_1 \Lambda_2)$ is straightforward.
\end{proof}

\begin{remark}
The superalgebraic  structure of ${\rm Cliff}_q(\lambda)$ for $\dim
V = 2$ is ``explicit'' only when
$\overline{\Delta(\lambda)} = \bar{1}$. In this case, one can show
that  ${\rm Cliff}_q(\lambda) \cong {\rm sMat}_{1|1} (\F)$.
\end{remark}

We are now ready to describe the superalgebra structure of ${\rm Cliff}_q (\lambda)$.

\begin{proposition} \hfill \label{cliff_structure}
\begin{itemize}
\item[\rm{(1)}] If $n$ is even,
then ${\rm Cliff}_q(\lambda) \cong {\rm Mat}_{r } (A)$, where $A =
\Cliff_q ((\Delta(\lambda), 1))$ and  $r = 2^{\frac{n}{2} -1}$.
Furthermore,  ${\rm Cliff}_q(\lambda) \cong {\rm Mat}_{2r } (\F)$ as (nongraded)
algebras and
\begin{equation*}
{\rm Cliff}_q(\lambda)_{\bar{0}} \cong
\begin{cases}{\rm Mat}_{r} (\F) \oplus {\rm Mat}_{r} (\F) \ \ &
\text{if} \ \overline{\Delta(\lambda)} = \bar{1}, \\
{\rm Mat}_{r} (\F (\sqrt{\Delta(\lambda)})) \ \ & \text{if} \
\overline{\Delta(\lambda)} \neq \bar{1}.
\end{cases}
\end{equation*}

\item[\rm{(2)}] If $n$ is odd, then ${\rm Cliff}_q(\lambda) \cong {\rm Mat}_{r } (B)$,
where $B = {\rm Cliff}_q (\Delta(\lambda)) $ and  $r = 2^{\frac{n-1}{2}}$.
Furthermore,
\begin{equation*}
\begin{cases}
{\rm Cliff}_q(\lambda) \cong Q_r (\F), \quad {\rm
Cliff}_q(\lambda)_{\bar{0}} \cong {\rm Mat}_{r} (\F) \ \ & \text{if}
\ \overline{\Delta(\lambda)} = \bar{1}, \\
{\rm Cliff}_q(\lambda) \cong {\rm Mat}_r (\F
(\sqrt{\Delta(\lambda)})), \quad {\rm Cliff}_q(\lambda)_{\bar{0}}
\cong {\rm Mat}_{r} (\F) \ \ & \text{if} \
\overline{\Delta(\lambda)} \neq \bar{1}.
\end{cases}
\end{equation*}
In particular, ${\rm Cliff}_q(\lambda) $ is a simple superalgebra
which is isomorphic to

$\bullet$  a direct sum of two isomorphic simple algebras if $n$ is odd and $\overline{\Delta(\lambda)} = \bar{1}$;

$\bullet$ a simple algebra otherwise.

\end{itemize}
\end{proposition}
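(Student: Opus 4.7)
My plan is to combine Lemma~\ref{Wittindex} (a Witt decomposition of $V$), Lemma~\ref{cliff_z} (the reduction $\Cliff_q(\lambda)\cong {\rm Mat}_m(\Cliff_q(\Lambda_Z))$ with $m = 2^{\dim W}$ for any weak Witt decomposition $V = Z \oplus W \oplus W^{*}$), and Lemma~\ref{low_cliff} (the classification of $1$- and $2$-dimensional Cliffords). First I would fix a weak Witt decomposition in which $\dim Z = 2$ when $n$ is even and $\dim Z = 1$ when $n$ is odd, so that $\dim W = \lfloor (n-1)/2 \rfloor$ and $m = r$ in both cases. A short induction on the number of hyperbolic pairs $\F w_i \oplus \F w_i^{*}$ shows that each such pair contributes a square to the discriminant; hence the form on $Z$ has discriminant equal to $\Delta(\lambda)$ modulo $\dot{\F}^{\,2}$. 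Since over $\F=\C(q)$ a nondegenerate quadratic form of dimension $\le 2$ is determined up to equivalence by its discriminant class (Lemma~\ref{q-legandre} and $\sqrt{-1}\in\F$), this identifies $\Cliff_q(\Lambda_Z)$ with $A = \Cliff_q((\Delta(\lambda),1))$ for $n$ even and with $B = \Cliff_q(\Delta(\lambda))$ for $n$ odd. Lemma~\ref{cliff_z} then delivers the isomorphisms $\Cliff_q(\lambda) \cong {\rm Mat}_r(A)$ and $\Cliff_q(\lambda) \cong {\rm Mat}_r(B)$, together with the description of the even parts coming from its two branches.

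To unpack the remaining assertions I would feed the structure of $A$ or $B$ from Lemma~\ref{low_cliff}. For $n$ even, part~(2) of that lemma gives $A \cong {\rm Mat}_2(\F)$ as nongraded algebras, hence $\Cliff_q(\lambda) \cong {\rm Mat}_{2r}(\F)$; the even part $A_{\bar{0}} \cong \Cliff_q(\Delta(\lambda))$ is identified by part~(1), yielding ${\rm Mat}_r(\F) \oplus {\rm Mat}_r(\F)$ when $\overline{\Delta(\lambda)} = \bar{1}$ (via $Q_1(\F)\cong\F\oplus\F$ as algebras) and ${\rm Mat}_r(\F(\sqrt{\Delta(\lambda)}))$ otherwise. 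For $n$ odd, part~(1) directly gives $B \cong Q_1(\F)$ or $B \cong \F(\sqrt{\Delta(\lambda)})$ according to the discriminant class; combined with the natural identification ${\rm Mat}_r(Q_1(\F)) \cong Q_r(\F)$ and $B_{\bar{0}} \cong \F$ in each subcase, the claimed isomorphisms and the uniform even part ${\rm Mat}_r(\F)$ drop out. The simplicity statement is then immediate: ${\rm Mat}_r(A)$ is simple as a superalgebra iff $A$ is, and among the four models only $Q_r(\F)$ decomposes as a nongraded algebra into two isomorphic simple summands.

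The main obstacle I anticipate is purely bookkeeping: one must track the discriminant class through the repeated splittings and ensure that the reduction of Lemma~\ref{cliff_z} is taken in its parity-preserving form (the conjugated isomorphism $\Theta$ constructed in that proof, which is necessary when $Z \neq 0$), so that the $\Z_2$-grading on the right-hand side genuinely coincides with the one on $\Cliff_q(\lambda)$. Once this calibration is in place the proposition reduces to assembling the three preceding lemmas in the correct order.
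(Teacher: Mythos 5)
Your proposal is correct and follows essentially the same route as the paper: Step 3 of the proof of Lemma \ref{Wittindex} to produce the Witt decomposition with residual form of discriminant $\Delta(\lambda)$, Lemma \ref{cliff_z} to reduce to ${\rm Mat}_r$ of a small Clifford algebra, and Lemma \ref{low_cliff} to identify that algebra and its even part. The only (harmless) refinements are that you use a uniform weak Witt decomposition with $\dim Z=2$ for all even $n$ rather than treating the square-discriminant case via the $Z=0$ branch, and that you make explicit the identification of the residual binary form with $(\Delta(\lambda),1)$ via the classification of binary forms over $\C(q)$, which the paper leaves implicit.
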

\begin{proof}

%As mentioned earlier, we assume that $| \lambda | = n$.

We first consider the case when $n$ is even and let  $r =
2^{\frac{n}{2} -1}$. If $\Lambda_1...\Lambda_n$ is a square in $\F$,
then (1) is proved by Lemma \ref{Wittindex} (3) and Lemma
\ref{cliff_z}. Now if $\overline{\Delta(\lambda)} \neq \bar{1}$, by
Lemma  \ref{cliff_z} and Step 3 in the proof of Lemma
\ref{Wittindex}, we have ${\rm Cliff}_q(\Lambda) \cong {\rm
Mat}_r(A)$, where $A = {\rm Cliff}_q(\Lambda_1...\Lambda_{n-1},
\Lambda_n)$.  We now  apply Lemma \ref{low_cliff} (1),(2) and prove
(1).

Next, assume that $n$ is odd and let  $r = 2^{\frac{n - 1}{2} }$. By
Lemma \ref{cliff_z} and Step 3 in the proof of Lemma
\ref{Wittindex}, we have ${\rm Cliff}_q(\lambda) \cong {\rm
Mat}_r(B)$, where $B$  is the $2$-dimensional Clifford superalgebra
${\rm Cliff}_q(\Lambda_1...\Lambda_n)$. We use Lemma \ref{low_cliff}
(1) to complete the proof. \end{proof}

In the statement of the following corollary we allow $\lambda_i$ to be zero for
some $i$. Recall that $| \lambda |$ is the number of nonzero
$\lambda_i$.  We also set $\lambda_N :=
(\lambda_{i_1},...,\lambda_{i_{| \lambda |}})$ where $N_{\lambda} =
\{i_1,...,i_{| \lambda |} \}$ and $i_1 <...< i_{| \lambda |}$.

\begin{corollary} \label{cliff_mod}
Every $\Z_2$-graded ${\rm Cliff}_q(\lambda_N) $-module  is
completely reducible. Furthermore, the superalgebra ${\rm
Cliff}_q(\lambda) $ has up to isomorphism
\begin{itemize}
\item[\rm{(1)}] two simple modules $E^q(\lambda)$ and $\Pi (E^q(\lambda))$
of dimension $2^{k-1} | 2^{k-1}$ if $| \lambda| = 2k$  and
$\overline{\Delta(\lambda)} = \bar{1}${\rm ;}

\item[\rm{(2)}] one simple module $E^q (\lambda) \cong \Pi (E^q (\lambda))$
of dimension $2^{k} | 2^{k}$ if $| \lambda| = 2k$ and
$\overline{\Delta(\lambda)} \neq \bar{1}$ {\rm (}in particular, if
$\lambda_1 >....> \lambda_{2k} >0${\rm)}{\rm ;}

\item[\rm{(3)}] one simple module $E^q (\lambda) \cong \Pi (E^q (\lambda))$
of dimension $2^{k} | 2^{k}$ if $|\lambda| = 2k+1$.
\end{itemize}
\end{corollary}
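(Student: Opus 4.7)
The plan is to combine the reduction of Lemma \ref{lambda_zero} with the structural classification of Proposition \ref{cliff_structure}. By Lemma \ref{lambda_zero}, any irreducible graded $\Cliff_q(\lambda)$-module is an irreducible graded $\Cliff_q(\lambda_N)$-module (extended by $t_{\bar j} v = 0$ for $j \in Z_\lambda$), so it suffices to analyze graded simples over $\Cliff_q(\lambda_N)$. Complete reducibility of arbitrary graded $\Cliff_q(\lambda_N)$-modules will follow because, in every case of Proposition \ref{cliff_structure}, $\Cliff_q(\lambda_N)$ is semisimple as an ungraded algebra (a full matrix algebra over $\F$ or over $\F(\sqrt{\Delta(\lambda)})$, possibly direct-summed with itself), and a standard averaging with respect to the grading involution converts any ungraded decomposition into a graded one.

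For the counting and dimensions, I will match each of the three cases of the corollary to the appropriate case of Proposition \ref{cliff_structure} and invoke the standard graded module theory of simple superalgebras of type $M$ (with $A_{\bar 0}$ a sum of two simple factors, admitting two non-isomorphic graded simples $E$ and $\Pi E$) and of type $Q$ (with $A_{\bar 0}$ simple, admitting a single graded simple $E \cong \Pi E$ of doubled dimension). Case (1) of the corollary and the $\overline{\Delta(\lambda)} = \bar 1$ subcase of case (3) are direct: Proposition \ref{cliff_structure} identifies $\Cliff_q(\lambda_N)$ with ${\rm Mat}_{2r}(\F)$ (type $M$, $r = 2^{k-1}$) and with $Q_r(\F)$ (type $Q$, $r = 2^k$), respectively, from which the stated graded dimensions $2^{k-1}|2^{k-1}$ and $2^k|2^k$ read off immediately, along with the counts of graded simples and the parity relation.

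The main technical point will be the two subcases with $\overline{\Delta(\lambda)} \neq \bar 1$, because Proposition \ref{cliff_structure} presents the algebras as matrix algebras over the quadratic extension $\F(\sqrt{\Delta(\lambda)})$ rather than directly as type $M$ or $Q$ over $\F$. For case (2), I would identify the $4$-dimensional algebra $A = \Cliff_q((\Delta(\lambda),1))$ with $Q_1(\F(\sqrt{\Delta(\lambda)}))$ as superalgebras by computing $A_{\bar 0} = \Cliff_q(-\Delta(\lambda)) \cong \F(\sqrt{\Delta(\lambda)})$ (using $\sqrt{-1} \in \F$) and observing that the odd generator $t_2$ with $t_2^2 = 1$ realizes $A = A_{\bar 0} \oplus t_2 A_{\bar 0}$; hence $\Cliff_q(\lambda_N) \cong Q_r(\F(\sqrt{\Delta(\lambda)}))$ is of type $Q$ over the extension, and its unique graded simple has $\F(\sqrt{\Delta(\lambda)})$-dimension $r|r$, i.e., $\F$-dimension $2^k|2^k$, with $E\cong \Pi E$. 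The $\overline{\Delta(\lambda)} \neq \bar 1$ subcase of (3) is handled similarly: the simple ${\rm Mat}_r(\F(\sqrt{\Delta(\lambda)}))$-module $\F(\sqrt{\Delta(\lambda)})^r$ inherits a grading from the natural $\Z_2$-grading on $\F(\sqrt{\Delta(\lambda)})$ (with $\sqrt{\Delta(\lambda)}$ odd), becoming a graded simple of $\F$-dim $r|r = 2^k|2^k$, with $E \cong \Pi E$ implemented by multiplication by $\sqrt{\Delta(\lambda)}$. Once these identifications are made, matching each graded algebra to a type ($M$ or $Q$) template yields the entire classification mechanically.
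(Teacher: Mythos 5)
Your conclusions in all four subcases are correct, and your reduction via Lemma \ref{lambda_zero} together with the semisimplicity-plus-averaging argument for complete reducibility are both valid, but your route differs from the paper's and contains one false identification. The paper never determines the super-Morita type of the whole superalgebra: it passes immediately to the equivalence between the category of $\Z_2$-graded ${\rm Cliff}_q(\lambda)$-modules and the category of ungraded ${\rm Cliff}_q(\lambda)_{\bar 0}$-modules (with quasi-inverse $V_0 \mapsto {\rm Cliff}_q(\lambda)\otimes_{{\rm Cliff}_q(\lambda)_{\bar 0}} V_0$), and then reads everything off from the even parts ${\rm Mat}_r(\F)\oplus{\rm Mat}_r(\F)$, ${\rm Mat}_r(\F)$, and ${\rm Mat}_r(\F(\sqrt{\Delta(\lambda)}))$ already computed in Proposition \ref{cliff_structure}: the number of graded simples equals the number of simple ${\rm Cliff}_q(\lambda)_{\bar 0}$-modules, $E\cong\Pi E$ holds precisely when $E_{\bar 0}\cong E_{\bar 1}$ over the even part (automatic when the even part has a unique simple module), and the graded dimension of $E$ is $\dim V_0\,|\,\dim V_0$ for $V_0$ the corresponding simple module over the even part. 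This is shorter and avoids the delicate point you run into.

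That point is your claim in case (2) that $A=\Cliff_q((\Delta(\lambda),1))\cong Q_1(\F(\sqrt{\Delta(\lambda)}))$. This is false: $Q_1(K)$ is commutative for every field $K$, while $A$ is a split quaternion algebra, isomorphic to ${\rm Mat}_2(\F)$ as an ungraded algebra by Lemma \ref{low_cliff}(2). The decomposition $A=A_{\bar 0}\oplus t_2A_{\bar 0}$ is not of type $Q_1$ because $t_2$ does not centralize $A_{\bar 0}$: writing $u=t_1t_2$, one has $u^2=-\Delta(\lambda)$ and $t_2u=-ut_2$, so conjugation by the odd unit $t_2$ acts on $A_{\bar 0}\cong\F(\sqrt{\Delta(\lambda)})$ by the nontrivial Galois automorphism rather than trivially. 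Your final answer in this case nevertheless stands, because the representation-theoretic ``type $Q$'' behaviour you invoke (a unique graded simple with $E\cong\Pi E$ of dimension twice that of the unique simple $A_{\bar 0}$-module) requires only that $A_{\bar 0}$ be simple and that $A_{\bar 1}$ contain a unit, both of which hold here; but as written the identification should be deleted or replaced by the even-part argument.
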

\begin{proof}  Thanks to Lemma \ref{lambda_zero}, we may assume that
$\lambda_i \neq 0$; i.e., $|\lambda|= n$. The category of all
$\Z_2$-graded ${\rm Cliff}_q(\lambda) $-modules is
equivalent to the category of all nongraded  ${\rm
Cliff}_q(\lambda)_{\bar{0}}$-modules. Indeed, the reverse
correspondence is obtained by $$V_0 \mapsto {\rm Cliff}_q(\lambda)
\otimes_{{\rm Cliff}_q(\lambda)_{\bar{0}}} V_0.$$ The corollary
follows from Proposition \ref{cliff_structure} and the
characterization of the simple and indecomposable (nongraded)
modules of  ${\rm Mat}_{r } (\F) \oplus {\rm Mat}_{r } (\F) $, ${\rm
Mat}_{r } (\F) $, and  ${\rm Mat}_{r }(\F
(\sqrt{\Delta(\lambda)}))$. (This characterization may be found, for
example, in \cite[Chapter XVII]{Lang}.)  \end{proof}

\begin{example} \label{cliff_ex}
Let $n=3$ and $\lambda = (4,2,1)$. We describe the action of
$t_{\bar{i}}$ ($i=1,2,3$) on $E^q (\lambda)$.  We have
$$\Lambda_1 =(q^2 + q^{-2})(q^4 + q^{-4}), \ \ \Lambda_2 = q^2 + q^{-2}, \ \
\Lambda_3 = 1.$$ For simplicity, let $t = q^2 + q^{-2}$. We first
find a solution of Legendre's equation
\begin{equation} \label{ex_eq}
\Lambda_1 X^2 + \Lambda_2 Y^2 + \Lambda_3 Z^2 = 0
\end{equation}
We follow the proof of  Lemma \ref{q-legandre}. Let $Z = t Z'$ and
$Y = \sqrt{-1} Y'$. In order to solve the equation $(t^2 -2)X^2 + t
Z'^2 = Y'^2$ we find $C_1 \in \C[t]$ for which $C_1^2 - t$ is a
multiple of $t^2 -2$. Using the Chinese Remainder Theorem, we choose
$$C_1 = \frac{\sqrt[4]{8}}{4}(1-\sqrt{-1})t +
\frac{\sqrt[4]{2}}{2}(1+\sqrt{-1}).$$ Then we solve the equation
$A_1 X_1^2 + B Z_1^2 = Y_1^2$ for $A_1 = -
\frac{\sqrt{2}}{4}\sqrt{-1}$ and $B = t$. A solution for this is
$$(X_1, Y_1 , Z_1) = (1, \frac{\sqrt[4]{8}}{4}(1-\sqrt{-1}), 0 ).$$
Then (\ref{ex_eq}) has a solution
$$
(A_1X_1, \sqrt{-1}( Y_1 C_1 +  B Z_1), t (C_1 Z_1 + Y_1) ) = \left(
-\frac{\sqrt{2}}{4} \sqrt{-1}, \frac{\sqrt{2}}{4} t + \frac{1}{2}
\sqrt{-1}, \frac{\sqrt[4]{8}}{4}(1- \sqrt{-1})t \right).
$$
Multiplying by an appropriate constant and changing signs, we fix
the following solution of (\ref{ex_eq})
$$
w = (X, Y, Z) = (1, \sqrt{-1}t - \sqrt{2}, \sqrt[4]{2}(1 + \sqrt{-1})t).
$$
We consider $w$ as an element in $V$ relative to the basis
$\{t_{\bar{1}}, t_{\bar{2}}, t_{\bar{3}}\}$. We use Lemma
\ref{Wittdec} to find a Witt decomposition $V = \F w \oplus \F w^*
\oplus \F z$. As mentioned in the proof of Lemma \ref{Wittdec}, we
find
$$
w^* = c(X, Y, -Z),
$$
where $c = \frac{\sqrt{-1}}{4 \sqrt{2}} t^{-2}$ such that
$\beta_{\lambda} (w, w^*)=1$ and $\beta_{\lambda} (w^*, w^*)=0$.
Then, as pointed out in Step 3 of the proof of Lemma
\ref{Wittindex}, we can find
$$
z = c(t YZ, -t(t^2-2)XZ, 0)
$$
such that $\beta_{\lambda} (z, w)= \beta_{\lambda} (z, w^*)= 0$ and
$ \beta_{\lambda} (z, z)=  -\frac{1}{4} \Lambda_1 \Lambda_2 \Lambda_3 = -\frac{1}{4} t^2
(t^2-2)$. Set $\alpha=\sqrt{\Delta (\lambda)} = \sqrt{t^2 (t^2-2)}$. Using Lemma
\ref{cliff_z}, we define an isomorphism $\Theta: {\rm Cliff}_q
(\lambda) \to {\rm Mat}_2 (\F (\alpha))$ by
$$
w \mapsto \left( \begin{array}{cc} 0& \alpha^{-1} \\ 0 & 0
\end{array} \right), \ \  w^* \mapsto \left( \begin{array}{cc} 0& 0 \\
\alpha & 0 \end{array} \right),  \ \ z \mapsto \left(
\begin{array}{cc} \alpha& 0 \\ 0 & -\alpha \end{array} \right).
$$
From Proposition \ref{cliff_structure} and Corollary \ref{cliff_mod}
we find that $E^q(\la) = \F(\alpha)^{\oplus 2}$. Let $v_1$ and $v_2$ be the
standard basis vectors of the $\F(\alpha)$-vector space $E^q(\la)$,
and let $\bar{v}_i = \alpha v_i$ $(i=1,2)$. The action of
$ {\rm Cliff}_q (\lambda)$ on $E^q (\la)$ is given by
\begin{eqnarray*}
z(v_1)   =    \bar{v}_1, z(v_2)   =   - \bar{v}_2, z(\bar{v}_1) =   t^2(t^2-2) v_1, z(\bar{v}_2)  =    - t^2(t^2-2) v_2\\
w(v_1)  =   0, w(v_2)   =   (t^2(t^2-2))^{-1} \bar{v}_1, w(\bar{v}_1) =   0, w(\bar{v}_2)  =   v_1\\
w^*(v_1)  =   \bar{v}_2, w^*(v_2)  =  0, w^*(\bar{v}_1) =  t^2(t^2-2)v_2, w^*(\bar{v}_2)  =   0
\end{eqnarray*}

In order to determine the action of $t_{\bar{i}}$ ($i=1,2,3$) on
$E^q(\la)$, we need to express $t_{\bar{1}}, t_{\bar{2}},
t_{\bar{3}}$ in terms of  $z, w, w^*$. With simple computations
we find:

\begin{eqnarray*}
t_{\bar{1}} & =  &\frac{\sqrt{-1}}{4 \sqrt{2}} \frac{t^2-2}{t}w + t(t^2 - 2) w^*
+ \frac{\sqrt[4]{8} (1-\sqrt{-1})}{2} \frac{\sqrt{-1}t - \sqrt{2}}{t} z, \\
t_{\bar{2}} & = & \frac{\sqrt{-1}}{4 \sqrt{2}} \frac{\sqrt{-1}t - \sqrt{2}}{t}w
+ (\sqrt{-1}t^2 - \sqrt{2} t) w^* + \frac{\sqrt[4]{8} (\sqrt{-1} -1)}{2} \frac{1}{t} z,  \\
t_{\bar{3}} & =  & \frac{1 - \sqrt{-1}}{4 \sqrt[4]{2} t} w +
\frac{\sqrt{2}(1+\sqrt{-1})t}{\sqrt[4]{2}} w^*.
\end{eqnarray*}
\end{example}

\vskip 5mm

%%%%%%%%%%%%%%%%%%%%%%%%%%%%%%%%%%%%%%%%%%%%%%%%%%%%%%%%%%%%%%%%%%%%%%%%%%%%%%%%%%%%%%%%%%%%%%%%%%%%%%%%%%%%%%%%%

\section{Highest Weight Representation theory of $U_q(\gg)$} \label{section_highest}

   A $U_q (\gg)$-module $V^q$ is called a {\it weight module} if it admits a {\it weight space decomposition}
   \begin{equation*}
     \begin{aligned}
       V^q = \bigoplus_{\mu \in P} V_{\mu}^q, ~~\text{where}~~ V_{\mu}^q =\{ v \in V^q ~|~ q^h v = q^{\mu(h)} v ~~\text{for all}~~ h \in P^{\vee}\}.
     \end{aligned}
   \end{equation*}

For a weight $U_q(\gg)$-module $V^q$, we set $\wt V^q = \{ \lambda \in P ~|~ V^q_{\lambda} \ne 0 \}$. By the same argument as in \cite[Ch.3]{HK2002}, it can be verified
that every submodule of a weight $U_q(\gg)$-module is also a
weight module. If $\dim_{\mathbb{C}(q)} V_{\mu}^q < \infty$ for all
$\mu \in P$, then the {\it character} of $V^q$ is defined to be
    \begin{equation*}
      \begin{aligned}
        \ch V^q = \sum_{\mu \in P} (\dim_{\mathbb{C}(q)} V_{\mu}^q)~e^{\mu},
      \end{aligned}
    \end{equation*}
where $e^{\mu}$ are formal basis elements of the group algebra
$\mathbb{C}(q)[P]$ with the multiplication given by
$e^{\lambda}e^{\mu}=e^{\lambda + \mu}$ for all $\lambda, \mu \in
P$.

A weight module $V^q$ is called a {\it highest weight module} if it is generated over $U_q (\gg)$
by a finite dimensional irreducible  $U_q^{\geq 0}$-module ${\bold
v}^q$. Note that ${\bold v^q}$ also admits a weight space
decomposition.  We call a vector in ${\bold v^q}$ a {\it highest weight vector} of $V^q$. Combining Lemma
\ref{half decom} and the triangular decomposition of $U_q(\gg)$
(Theorem \ref{tri decom}), we obtain $V^q =U_q^{-} {\bold v}^q$.

%The following proposition can be verified in a similar manner in\cite[Proposition 2]{PS1}.

\begin{proposition} \label{\Uq^0-module}
If ${\bold v}^q$ is a finite dimensional irreducible $\Uq^{\geq
0}$-module with a weight space decomposition ${\bold v}^q=
  \bigoplus _{\mu \in P} {\bold v}^q_{\mu}$,
  then ${\bold v}^q$ is irreducible as a $\Uq^0$-module
  and ${\bold v}^q = {\bold v}^q_{\la}$ for some $\la \in P$. Conversely, if ${\bold v}^q$ is an irreducible $U_q ^0$-module
  on which the even part of $U_q ^0$ acts by a weight $\lambda$,
  then ${\bold v}^q$ can be endowed with the structure of an
  irreducible $U_q^{\geq 0}$-module by letting $\Uq^+$ act
  trivially on ${\bold v}^q$.
\end{proposition}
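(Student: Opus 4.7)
The plan is to handle the two assertions separately, in both cases exploiting that $U_q^+$ strictly raises weights and that, by Lemma~\ref{half decom}, $U_q^{\geq 0} \cong U_q^0 \otimes U_q^+$.

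\textbf{Forward direction.} First I would use the finite dimensionality of ${\bold v}^q$ to pick a weight $\lambda \in \wt({\bold v}^q)$ which is maximal with respect to the partial order on $\gh_{\bar 0}^*$. The generators $e_i, e_{\bar i}$ of $U_q^+$ shift weights by $+\alpha_i$, so by maximality $e_i \cdot {\bold v}^q_\lambda = e_{\bar i} \cdot {\bold v}^q_\lambda = 0$; that is, $U_q^+$ annihilates ${\bold v}^q_\lambda$. Since $U_q^0$ preserves each weight space, ${\bold v}^q_\lambda$ is stable under both $U_q^0$ and $U_q^+$, hence under $U_q^{\geq 0}$. The irreducibility of ${\bold v}^q$ forces ${\bold v}^q = {\bold v}^q_\lambda$, proving the second claim. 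In particular $U_q^+$ acts as zero on all of ${\bold v}^q$, so any $U_q^0$-submodule is automatically a $U_q^{\geq 0}$-submodule, and irreducibility over $U_q^{\geq 0}$ transfers to irreducibility over $U_q^0$.

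\textbf{Converse direction.} Start with an irreducible $U_q^0$-module ${\bold v}^q$ on which the even subalgebra acts by the character $q^h \mapsto q^{\lambda(h)}$. Note first that such a module is automatically finite dimensional, since the $U_q^0$-action factors through the quantum Clifford superalgebra $\Cliff_q(\lambda)$, whose irreducible modules were classified (and shown finite dimensional) in Corollary~\ref{cliff_mod}. I then extend the action to $U_q^{\geq 0}$ by declaring $e_i$ and $e_{\bar i}$ to act as zero. To check well-definedness, I would go through the defining relations of Theorem~\ref{defining relations of Uqqn} that involve $U_q^+$-generators: the purely $U_q^+$-relations trivially hold, while each mixed $U_q^0$-$U_q^+$-relation has both sides built from products that contain at least one $U_q^+$-generator (either an $e_i, e_{\bar i}$, or one of the twisted commutators yielding another such generator on the right-hand side), so both sides collapse to $0$ on ${\bold v}^q$. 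This is the one place where a short case check is needed, but each case reduces to $0=0$.

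\textbf{Irreducibility and final step.} Once the $U_q^{\geq 0}$-action is defined, any $U_q^{\geq 0}$-submodule is in particular a $U_q^0$-submodule, and conversely every $U_q^0$-submodule is $U_q^{\geq 0}$-stable because $U_q^+$ acts by zero. Hence the lattice of $U_q^{\geq 0}$-submodules coincides with the lattice of $U_q^0$-submodules, so irreducibility over $U_q^0$ yields irreducibility over $U_q^{\geq 0}$. The main (and only) obstacle is the verification of the mixed relations in the converse, which is routine but must be done carefully since several of them, e.g.\ $k_{\bar i} e_i - q e_i k_{\bar i} = e_{\bar i}q^{-k_i}$ and $k_{\bar i} e_{\bar i} + q e_{\bar i} k_{\bar i} = e_i q^{-k_i}$, have right-hand sides that are not a priori zero but become zero once one notes that they again involve a $U_q^+$-generator.
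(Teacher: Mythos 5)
Your proposal is correct and follows essentially the same route as the paper: the authors also pick a weight $\la$ with ${\bold v}^q_{\la+\al_i}=0$ for all $i$, observe that ${\bold v}^q_{\la}$ is then a $U_q^{\geq 0}$-submodule so that irreducibility forces ${\bold v}^q={\bold v}^q_{\la}$, and dismiss the converse as ``obvious from the defining relations.'' Your write-up merely spells out the relation check for the converse (and the transfer of irreducibility between $U_q^0$ and $U_q^{\geq 0}$) in more detail than the paper does.
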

\begin{proof}
Because ${\bold v}^q$ is finite dimensional, there exists a weight
$\la \in P$ such that ${\bold v}^q_{\la} \neq 0$ and ${\bold v}^q
_{\la+\al_i} =0$ for all $i \in I$. Then we have $U_q^+ ~ {\bold
v}^q _{\la}={\bold v}^q_{\la}$ and $U_q^0 ~ {\bold v}^q_{\la}
={\bold v}^q_{\la}$. Thus ${\bold v}^q_{\la}$ is a
$U_q^{\geq 0}$-submodule of ${\bold v}^q$ and hence ${\bold
v}^q_{\la}={\bold v}^q$. The other direction is obvious from the
defining relations of $U_q(\gg)$ in Theorem \ref{defining
relations of Uqqn}.
\end{proof}

\begin{comment}
\begin{proposition} \label{action of kibar}
Let ${\bold v}^q$ be an irreducible $U^0_q$-module of weight
$\la=\la_1\ep_1+\cdots+\la_n\ep_n \in P$. If $\la_i=0$ for some $i
\in I$, then $k_{\bar i} v=0$ for all $v \in {\bold v}^q$.
\end{proposition}
\begin{proof}
Consider ${\bold w}=\{ w \in {\bold v}^q ~|~ k_{\bar i}w=0\}$. One
can easily show that ${\bold w}$ is a $U^0_q$-submodule of ${\bold
v}^q$. If there exists nonzero $v \in {\bold v}^q$ such that
$k_{\bar i}v \neq 0$; i.e., if ${\bold w} \lneq {\bold v}^q$, then
$k_{\bar i}^2 v=0$ yields $k_{\bar i} v \in {\bold w}$. Thus
${\bold w}$ is nontrivial, which is a contradiction.
\end{proof}
\end{comment}

\begin{remark} \label{Uqgeq0-module and Cliffq-module}
If ${\bold v^q}$ is a finite dimensional
irreducible $U_q^{\geq 0}$-module which generates a highest weight
module $V^q$ of highest weight $\lambda$, then, by Proposition
\ref{\Uq^0-module}, we know that ${\bold
v^q}$ is an irreducible $U_q^0$-module of weight $\lambda$.
Thus ${\bold v^q}$ is a finite dimensional irreducible module over
$\Cliff_q(\lambda)=U_q^0 /I^q(\lambda) $. Conversely, if $E^q$ is
a finite dimensional irreducible $\Cliff_q(\lambda)$-module, then
it is clear that $E^q$ is an irreducible $U^0_q$-module of weight
$\lambda$.
\end{remark}

  By Corollary \ref{cliff_mod}, we know that, up to isomorphism,
${\rm Cliff}_q(\la)$ has at most two simple modules:
$E^q(\lambda)$ and $\Pi (E^q(\la))$. The $U_q(\gg)$-module
$W^q(\la) = U_q (\gg) \otimes_{U_q^{\geq 0}} E^q(\la)$ is called
the {\it Weyl module} of $U_q(\gg)$ corresponding to $\la$
(defined up $\Pi$).

 \begin{proposition} \label{verma} \hfill
  \begin{enumerate} [{\quad \rm(1)}]

    \item $W^q(\la)$ is a free $U_q^-$-module of rank $\dim E^q(\la)$.

    \item Every highest weight $U_q(\gg)$-module with highest
    weight $\la$ is a homomorphic image of $W^q(\la)$.

    \item Every Weyl module $W^q(\la)$ has a unique maximal submodule $N^q(\la)$.
  \end{enumerate}
 \end{proposition}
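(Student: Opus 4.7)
The plan for (1) is to apply the triangular decomposition of Theorem \ref{tri decom}, which gives $U_q(\gg) \cong U_q^- \otimes U_q^{\geq 0}$ as $(U_q^-,U_q^{\geq 0})$-bimodules. Tensoring over $U_q^{\geq 0}$ with $E^q(\la)$ then yields
\beq
W^q(\la) \cong U_q^- \otimes_{\C(q)} E^q(\la)
\eeq
as left $U_q^-$-modules, so $W^q(\la)$ is free over $U_q^-$ of rank $\dim E^q(\la)$, with basis any $\C(q)$-basis of $E^q(\la)$.

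For (2), I would start with a highest weight $U_q(\gg)$-module $V^q$ generated by a finite-dimensional irreducible $U_q^{\geq 0}$-submodule ${\bf v}^q$. By Remark \ref{Uqgeq0-module and Cliffq-module}, ${\bf v}^q$ is an irreducible $\Cliff_q(\la)$-module, hence by Corollary \ref{cliff_mod} isomorphic to $E^q(\la)$ or $\Pi E^q(\la)$. Since $W^q(\la)$ is defined only up to $\Pi$, I fix a $U_q^{\geq 0}$-linear isomorphism $\varphi:E^q(\la)\to {\bf v}^q$; by the universal property of induction, $\varphi$ extends to a $U_q(\gg)$-module homomorphism $\tilde\varphi: W^q(\la) \to V^q$, and surjectivity follows from $V^q = U_q(\gg)\cdot {\bf v}^q$.

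For (3), the key observation is that every proper submodule $N$ of $W^q(\la)$ intersects the highest weight space $1\otimes E^q(\la)$ trivially. From (1) together with the $Q_+$-grading on $U_q^-$, the $\la$-weight space of $W^q(\la)$ coincides with $1\otimes E^q(\la)$ and no weight strictly above $\la$ appears, so $U_q^+$ annihilates $1\otimes E^q(\la)$ inside $W^q(\la)$. Thus $N\cap E^q(\la)$ is stable under $U_q^0$ (it preserves weight spaces) and trivially under $U_q^+$, making it a $U_q^{\geq 0}$-submodule of $E^q(\la)$. By Proposition \ref{\Uq^0-module} together with the irreducibility of $E^q(\la)$ as a $\Cliff_q(\la)$-module (Corollary \ref{cliff_mod}), this intersection is either $0$ or all of $E^q(\la)$, and the latter would force $N\supseteq U_q(\gg)\cdot E^q(\la) = W^q(\la)$. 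Defining $N^q(\la):=\sum_{N\subsetneq W^q(\la)} N$, the sum is again a weight submodule with zero $\la$-weight space (intersection with a weight space commutes with summing weight submodules), hence proper; by construction it is the unique maximal submodule.

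The arguments are standard highest-weight theory adapted to the quantum queer setting; the only mildly delicate points are tracking the $\Pi$-ambiguity when identifying ${\bf v}^q$ with $E^q(\la)$ in (2), and checking in (3) that $1\otimes E^q(\la)$ really is the whole top weight space and is annihilated by $U_q^+$, both of which are immediate from the freeness in (1) and Proposition \ref{\Uq^0-module}.
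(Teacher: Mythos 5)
Your proposal is correct and follows essentially the same route as the paper: (1) via the triangular decomposition, (2) via the identification of ${\bold v}^q$ with $E^q(\la)$ up to $\Pi$ and the universal property of induction, and (3) via the observation that any proper submodule meets the top weight space $E^q(\la)$ trivially, so the sum of all proper submodules is still proper. Your added detail in (3) --- that $N\cap E^q(\la)$ is a $U_q^{\geq 0}$-submodule and hence $0$ or all of $E^q(\la)$ by irreducibility --- just spells out what the paper leaves implicit.
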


 \begin{proof}
   (1) This is clear from the definition.

   (2) Let $V^{q}$ be a highest weight module with highest weight
   $\la$ generated by the irreducible $U_q^{\geq 0}$-module ${\bold
v}^q$. Because ${\bold v}^q$ is irreducible over $\Cliff_q(\la)$,
it is isomorphic to $E^q(\la)$ up to $\Pi$. Thus the map
$\phi : W^q(\la) \longrightarrow V^q$ induced by $E^q(\la) \to
{\bold v}^q$ is a surjective $U_q(\gg)$-module
homomorphism.

   (3) Since $E ^q(\la)$ is an irreducible ${\rm Cliff}_q(\la)$-module,
   any proper submodule $N^q$ of $W^q(\la)$ does not contain highest
weight vectors (the vectors in $E^q(\lambda)$). That is, $N^q$ must
lie in $\bigoplus_{\mu < \la} W^q(\la)_{\mu}$. Thus the sum of two
proper submodules is again a proper submodule of $W^q(\la)$. Then
the sum $N^q(\la)$ of all proper submodules of $W^q(\la)$ is the
unique maximal submodule of $W^q(\la)$.
   \end{proof}

\noindent For $\la \in P$, ~ the unique irreducible quotient
$V^q(\la) := W^q(\la)/N^q(\la)$ is called the {\it irreducible
highest weight module} over $U_q (\gg)$ with highest weight $\la$
(defined up to $\Pi$).

  We introduce the notation
  \begin{equation*}
[n]_q := \dfrac{q^n - q^{-n}}{q - q^{-1}},
\end{equation*} which is called a {\it{$q$-integer}}.
We also define $[0]_{q}! := 1$ and $[n]_q ! := [n]_q \cdot [n-1]_q
\cdots [1]_q.$ We define the {\it divided powers} of $e_i$ and
$f_i$ as follows: \beq \bea e_i^{(k)} := \dfrac{e_i^k}{[k]_{q}!}, \
\ \ f_i^{(k)} := \dfrac{f_i^k}{[k]_{q}!}. \eea \eeq

By a straightforward induction argument, we can prove the following lemma.

\begin{lemma} \label{lemma_defining_rel_Vq(la)}For all $i \in I$ and $k \in \Z_{\geq 0}$, we have
    \begin{equation*}
e_i f_i^{(k)} = f_i^{(k)} e_i +
    f_i^{(k-1)} \dfrac{q^{h_i} q^{-k+1} - q^{-h_i} q^{k-1}}{q - q^{-1}}.
\end{equation*}
\end{lemma}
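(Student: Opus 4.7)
The plan is to prove the identity by induction on $k$, with the base case $k=1$ being exactly the defining commutation relation
\[
e_i f_i - f_i e_i = \frac{q^{h_i} - q^{-h_i}}{q - q^{-1}}
\]
from Theorem \ref{defining relations of Uqqn} (recall $h_i = k_i - k_{i+1}$), since in that case $f_i^{(1)} = f_i$ and $f_i^{(0)} = 1$, and the powers of $q$ appearing in the RHS are $q^0 = 1$.

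For the inductive step, I would first rewrite $f_i^{(k+1)} = \frac{1}{[k+1]_q} f_i f_i^{(k)}$ and compute
\[
e_i f_i^{(k+1)} = \frac{1}{[k+1]_q} e_i f_i f_i^{(k)} = \frac{1}{[k+1]_q}\Bigl( f_i e_i f_i^{(k)} + \tfrac{q^{h_i} - q^{-h_i}}{q - q^{-1}} f_i^{(k)}\Bigr).
\]
Then apply the induction hypothesis to $e_i f_i^{(k)}$ and use the two elementary identities $f_i f_i^{(k)} = [k+1]_q f_i^{(k+1)}$ and $f_i f_i^{(k-1)} = [k]_q f_i^{(k)}$ to assemble the $f_i^{(k+1)} e_i$ term and to pull a factor of $f_i^{(k)}$ out front of the remaining contribution. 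The second summand above still has $q^{\pm h_i}$ on the left of $f_i^{(k)}$, so I would move these past using the relation $q^{h_i} f_i^{(k)} = q^{-2k} f_i^{(k)} q^{h_i}$ (which follows by iterating $q^h f_i q^{-h} = q^{-\alpha_i(h)} f_i$ with $\alpha_i(h_i) = 2$), and the analogous one for $q^{-h_i}$.

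After these manipulations, what remains to check is the scalar identity, separately on the coefficients of $f_i^{(k)} q^{h_i}$ and $f_i^{(k)} q^{-h_i}$, namely
\[
\frac{[k]_q \, q^{-k+1} + q^{-2k}}{[k+1]_q} = q^{-k}
\qquad \text{and the sign-flipped version for } q^{-h_i}.
\]
This follows from the direct computation
\[
[k]_q q^{-k+1} + q^{-2k} = \tfrac{q - q^{-2k+1}}{q - q^{-1}} + q^{-2k} = \tfrac{q - q^{-2k-1}}{q - q^{-1}} = q^{-k}[k+1]_q,
\]
with the symmetric calculation handling the $q^{-h_i}$ coefficient. No step is a genuine obstacle; the whole argument is a routine induction, and the only place where care is required is keeping track of the non-commutation of $q^{h_i}$ with $f_i^{(k)}$ when combining the two contributions into a single expression with all $q^{\pm h_i}$ placed to the right of $f_i^{(k)}$.
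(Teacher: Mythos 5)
Your proposal is correct and is exactly the ``straightforward induction argument'' the paper invokes without writing out: the base case is the relation $e_if_i - f_ie_i = \frac{q^{h_i}-q^{-h_i}}{q-q^{-1}}$, and the inductive step correctly combines the induction hypothesis with $q^{\pm h_i}f_i^{(k)} = q^{\mp 2k}f_i^{(k)}q^{\pm h_i}$ and the $q$-integer identity $[k]_q q^{-k+1}+q^{-2k}=q^{-k}[k+1]_q$. No issues.
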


\begin{proposition} \label{defining_rel_Vq(la)}
  Let $\la \in \Lambda^{+}$ and $V^q(\la)$ be the irreducible highest weight
  $U_q(\gg)$-module generated by an irreducible finite dimensional
  $U_q^{\geq 0}$-module ${\bold v}^q$. Then $f_i^{\la(h_i)+1}v=0$ for all $v \in {\bold v}^q$ and $i \in I$.
\end{proposition}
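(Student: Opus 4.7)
The plan is to work inside the Weyl module $W^q(\la)$ and show, for any $v\in E^q(\la)$ and $i\in I$, that the element $w := f_i^{m+1}v$ (with $m:=\la(h_i)$) generates a proper $U_q(\gg)$-submodule of $W^q(\la)$. Since $N^q(\la)$ is the unique maximal submodule by Proposition~\ref{verma}(3), this will force $w\in N^q(\la)$, hence $f_i^{m+1}v=0$ in $V^q(\la) = W^q(\la)/N^q(\la)$.

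Since $W^q(\la)$ is free over $U_q^-$ on the irreducible $U_q^{\geq 0}$-module $E^q(\la)$, the weight--$\la$ subspace of $W^q(\la)$ is precisely $E^q(\la)$, and a submodule is proper iff it meets $E^q(\la)$ trivially. Using the triangular decomposition $U_q(\gg) = U_q^- U_q^0 U_q^+$ from Theorem~\ref{tri decom} together with the weight bound $\wt(u^+w)\leq \la$ for $u^+\in U_q^+$, a short weight count shows that any vector of weight $\la$ in $U_q(\gg)\cdot w$ must arise from $U_q^0\cdot(U_q^+)_{(m+1)\al_i}$ acting on $w$ (the $U_q^-$--factor is forced to be a scalar). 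Hence it suffices to verify that $u^+w=0$ for every $u^+\in (U_q^+)_{(m+1)\al_i}$.

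I would next invoke the relations $e_ie_{\bar i}=e_{\bar i}e_i$ (the $j=i$ case of $e_ie_{\bar j}-e_{\bar j}e_i=0$ for $|i-j|\neq 1$) and $e_{\bar i}^2=-\tfrac{q-q^{-1}}{q+q^{-1}}e_i^2$ to conclude that $(U_q^+)_{(m+1)\al_i}$ is two-dimensional, spanned by $e_i^{m+1}$ and $e_{\bar i}e_i^m$. Lemma~\ref{lemma_defining_rel_Vq(la)} combined with $e_iv=0$ (which follows from Proposition~\ref{\Uq^0-module}) and $q^{h_i}v=q^m v$ gives
\begin{equation*}
e_i f_i^{(m+1)}v \;=\; f_i^{(m)}\,\frac{q^{m}q^{-m}-q^{-m}q^{m}}{q-q^{-1}}\,v \;=\;0,
\end{equation*}
so $e_iw=0$; iterating, $e_i^{m+1}w=0$, and for $m\geq 1$ also $e_{\bar i}e_i^m w=0$.

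The main (though mild) obstacle is the remaining case $m=0$, which requires $e_{\bar i}f_iv=0$. The defining relation $e_{\bar i}f_i-f_ie_{\bar i}=q^{k_{i+1}}k_{\bar i}-k_{\overline{i+1}}q^{k_i}$, combined with $e_{\bar i}v=0$, reduces this to $\bigl(q^{\la_{i+1}}k_{\bar i}-q^{\la_i}k_{\overline{i+1}}\bigr)v=0$. Here the hypothesis $\la\in\qdiw$ is essential: the equation $\la(h_i)=\la_i-\la_{i+1}=0$ forces $\la_i=\la_{i+1}=0$ by the very definition of $\qdiw$, so the Clifford constants $\Lambda_i=(q^{2\la_i}-q^{-2\la_i})/(q^2-q^{-2})$ and $\Lambda_{i+1}$ both vanish. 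Lemma~\ref{lambda_zero} applied to the irreducible $\Cliff_q(\la)$-module $E^q(\la)$ then yields $k_{\bar i}v=k_{\overline{i+1}}v=0$, completing the argument.
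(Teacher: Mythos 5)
Your proof is correct and follows essentially the same route as the paper's: both hinge on showing that $f_i^{\la(h_i)+1}v$ would generate a proper submodule, using the commutation identity of Lemma \ref{lemma_defining_rel_Vq(la)}, the relation $e_{\bar i}^2 = -\tfrac{q-q^{-1}}{q+q^{-1}}e_i^2$, weight reasons for $j\neq i$, and the identical treatment of the case $\la(h_i)=0$ via $\la\in\Lambda^+$ and Lemma \ref{lambda_zero}. The only difference is organizational: you work in $W^q(\la)$ with its unique maximal submodule and reduce to killing $(U_q^+)_{(m+1)\al_i}$, while the paper argues by contradiction inside the irreducible quotient.
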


\begin{proof}
Lemma \ref{lemma_defining_rel_Vq(la)} implies
\begin{equation*} \bea
e_i f_i^{(k)} v & = [{\la(h_i)-k+1}]_{q} f_i^{(k-1)} v \ \ \
\text{for all } v \in {\bold v}^q. \eea \end{equation*} If
$k=\la(h_i)+1$, we see that $e_i f_i^{\la(h_i)+1} v=0$. Moreover,
for $j \neq i$, we already know $e_j f_i^{\la(h_i)+1} v=0$ and
$e_{\bar j} f_i^{\la(h_i)+1} v=0$, since $V^q(\la) =
\bigoplus_{\mu \leq \la} V_{\mu}^q$.

Suppose that $e_{\bar i} f_i^{\la(h_i)+1} v \neq 0.$ We have \beq \bea
e_i(e_{\bar i} f_i^{\la(h_i)+1} v)&=e_{\bar i} (e_if_i^{\la(h_i)+1} v)=0,\\
e_{\bar i} (e_{\bar i} f_i^{\la(h_i)+1} v)&=-\dfrac{q-q^{-1}}{q+q^{-1}} e_{i}^2 f_i^{\la(h_i)+1} v =0.\\
\eea   \eeq
Also, $e_j(e_{\bar i} f_i^{\la(h_i)+1} v)=e_{\bar
j}(e_{\bar i} f_i^{\la(h_i)+1} v)=0$ for $j \neq i$, since $V^q(\la) =
\bigoplus_{\mu \leq \la} V_{\mu}^q$ .

If $\la(h_i) \geq 1$, then $\wt (e_{\bar i} f_i^{\la(h_i)+1} v)
=\la-\la(h_i)\al_i < \la$. Thus $e_{\bar i} f_i^{\la(h_i)+1} v$
would generate a nontrivial proper submodule of $V^q(\la)$, which
contradicts the irreducibility of $V^q(\la)$.

If $\lambda(h_i)=0$, then we have $\lambda_i=\lambda_{i+1}=0$ so
that $k_{\bar i} v= k_{\overline {i+1}} v=0$ by Lemma \ref{lambda_zero}. From the defining relation of $U_q(\gg)$,
we know
$$e_{\bar i}f_{i} v =f_ie_{\bar i} v + (q^{k_{i+1}} k_{\bar i}-q^{k_i} k_{\overline{i+1}})v=0.$$
Therefore, in any
case, $e_{\bar i} f_i^{\la(h_i)+1} v=0$ for all $v \in
\bold{v}^q$.

Similarly, if $f_i^{\la(h_i)+1} v \neq 0$, it would generate a
nontrivial proper submodule  of $V^q(\la)$. Hence we conclude
$f_i^{\la(h_i)+1} v=0$ for all $v \in \bold{v}^q$.
\end{proof}

\vskip 5mm

%%%%%%%%%%%%%%%%%%%%%%%%%%%%%%%%%%%%%%%%%%%%%%%%%%%%%%%%%%%%%%%%%%%%%%%%%%%%%%%%%%%%%%%%%%%%%%%%%%%%%%%%%%%%%%%%

\section{Classical limits } \label{section_limit}

Let $\Ao := \{f/g \in \C(q) \ | \  f, g \in \C[q], g(1) \neq 0 \}.$
For an integer $n \in \Z$, we formally define \beq
[y;n]_x:=\dfrac{yx^n-y^{-1}x^{-n}}{x-x^{-1}}, \ \ \ \
(y;n)_x:=\dfrac{yx^n-1}{x-1}. \eeq For example, \beq
[q^h;0]_q=\dfrac{q^h-q^{-h}}{q-q^{-1}}, \ \ \ \
(q^h;0)_q=\dfrac{q^h-1}{q-1}. \eeq

\begin{definition}
We define the {\em $\Ao$-form} $U_{\Ao}$ of the quantum
superalgebrta $U_q(\gg)$ to be the $\Ao$-subalgebra of $U_q(\gg)$
with 1 generated by the elements $e_i,e_{\bar i},f_i, f_{\bar i},
q^h, k_{\bar l}$ and $(q^h;0)_q$ ($i \in I, l \in J, h \in
P^{\vee}$).
\end{definition}

 We denote by $U_{\Ao}^+$ (respectively, $U_{\Ao}^-$) the $\Ao$-subalgebra of $U_q(\gg)$ with 1 generated by $e_i,e_{\bar i}$ (respectively, $f_i,f_{\bar i}$) for $i \in I$, and by $U_{\Ao}^0$ the $\Ao$-subalgebra of $U_q(\gg)$ with 1 generated by $q^h,  k_{\bar l}$ and $(q^h;0)_q$ for $ l \in J, \ h \in P^{\vee}$.

 \begin{lemma} \hfill \label{lemma for the tri dec of U(A_1)}
   \bee[{\quad \rm(1)}]
   \item $(q^h;n)_q \in \UAo^0$ for all $n \in \Z$ and $h \in P^{\vee}$.
   \item $[q^h;0]_q \in \UAo^0$  for all $n \in \Z$ and $h \in P^{\vee}$.
   \eee
 \end{lemma}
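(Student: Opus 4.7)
My plan is to reduce both expressions to $\mathbf{A}_1$-linear combinations of the listed generators $q^h$ and $(q^h;0)_q$ by elementary algebraic identities, using the fact that $\mathbf{A}_1$ is the local ring of $\mathbb{C}(q)$ at $q=1$, hence contains every $q^n$ $(n\in\Z)$ (since $1^n=1\neq 0$) and every polynomial in $q,q^{-1}$.

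For part (1), the key identity is
\begin{equation*}
q^{h+n}-1 = q^n(q^h-1)+(q^n-1),
\end{equation*}
which, after dividing by $q-1$, gives
\begin{equation*}
(q^h;n)_q = q^n(q^h;0)_q + \frac{q^n-1}{q-1}\cdot 1.
\end{equation*}
Since $q^n\in\mathbf{A}_1$ and $\frac{q^n-1}{q-1}$ is a polynomial in $q^{\pm 1}$ (hence in $\mathbf{A}_1$), both summands lie in $U_{\mathbf{A}_1}^0$.

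For part (2), I first compute
\begin{equation*}
(q^h;0)_q-(q^{-h};0)_q = \frac{q^h-q^{-h}}{q-1},
\end{equation*}
and observe that the scalar $\frac{q-1}{q-q^{-1}}=\frac{q}{q+1}$ lies in $\mathbf{A}_1$ (its denominator takes the value $2$ at $q=1$). Multiplying yields
\begin{equation*}
[q^h;0]_q = \frac{q}{q+1}\Bigl((q^h;0)_q-(q^{-h};0)_q\Bigr)\in U_{\mathbf{A}_1}^0.
\end{equation*}
(If the statement of (2) is actually meant to be $[q^h;n]_q$, matching the quantifier on $n$, the analogous identity $[q^h;n]_q = q^n[q^h;0]_q + [n]_q\, q^{-h}$ with $[n]_q\in\mathbf{A}_1$ handles it the same way.)

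There is no real obstacle here beyond writing down the correct telescoping identities; the only non-formal input is the verification that the rational scalars $q^n$ and $q/(q+1)$ have denominators that do not vanish at $q=1$, which is immediate. The lemma is essentially a bookkeeping step needed to confirm that the $\mathbf{A}_1$-form $U_{\mathbf{A}_1}^0$ contains all the Cartan-type elements one expects.
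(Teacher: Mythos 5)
Your proof is correct and follows essentially the same route as the paper: the identity $(q^h;n)_q = q^n(q^h;0)_q + \frac{q^n-1}{q-1}$ for part (1) is exactly the paper's, and your identity $[q^h;0]_q = \frac{q}{q+1}\bigl((q^h;0)_q-(q^{-h};0)_q\bigr)$ is a trivial rearrangement of the paper's $[q^h;0]_q = \frac{q}{q+1}(1+q^{-h})(q^h;0)_q$, since $q^{-h}(q^h;0)_q = -(q^{-h};0)_q$. Your parenthetical remark that the quantifier on $n$ in part (2) suggests the intended statement is $[q^h;n]_q$ is a fair observation, and your proposed identity handles that case correctly as well.
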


 \begin{proof}
  Our assertions follow immediately from the following identities:
   \beq \bea
   (q^h;n)_q & = q^n(q^h;0)_q + \dfrac{q^n-1}{q-1}, \\
   [q^h;0]_q & =  q\dfrac{q-1}{q^2-1}(1+q^{-h})(q^h;0)_q.
   \eea \eeq   \end{proof}

 Note that
 \beq
 k_{\bar i}^2=[q^{2k_i};0]_{q^2}=q^2\dfrac{q^2-1}{q^4-1}(1+q^{-2k_i})\dfrac{1}{q+1}(q^{2k_i};0)_q.
 \eeq

 \begin{proposition} \label{tri decom of UAo}
   We have the triangular decomposition of the algebra $\UAo$.  Namely,
   \beq
   \UAo \cong \UAo^- \ot \UAo^0 \ot \UAo^+
   \eeq
as $\Ao$-modules.
 \end{proposition}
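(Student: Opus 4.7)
The plan is to follow the template of Theorem \ref{tri decom} and upgrade that argument from $\Uqqn$ to the $\Ao$-form. I consider the $\Ao$-linear multiplication map
\beq
\mu : \UAo^{-} \otimes_{\Ao} \UAo^{0} \otimes_{\Ao} \UAo^{+} \longrightarrow \UAo, \qquad f \otimes h \otimes e \longmapsto fhe,
\eeq
and prove it is both surjective (by a PBW-style reordering based on the relations of Theorem \ref{defining relations of Uqqn}) and injective (via a flatness argument that invokes Theorem \ref{tri decom}).

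For surjectivity I argue by induction on the total ``$e$-degree-plus-$f$-degree'' of a monomial that every element of $\UAo$ can be rewritten as an $\Ao$-linear combination of products $fhe$ with $f \in \UAo^{-}$, $h \in \UAo^{0}$, $e \in \UAo^{+}$. The crucial point is that every correction term produced when an $e$-type generator is commuted past an $f$-type generator lies inside $\UAo^{0}$. For example, the relations of Theorem \ref{defining relations of Uqqn} give $e_i f_i - f_i e_i = [q^{h_i};0]_q$, $e_i f_{\bar i} - f_{\bar i} e_i = q^{-k_{i+1}} k_{\bar i} - k_{\overline{i+1}} q^{-k_i}$, and $e_{\bar i} f_{\bar i} + f_{\bar i} e_{\bar i} = [q^{k_i+k_{i+1}};0]_q + (q-q^{-1}) k_{\bar i} k_{\overline{i+1}}$, while the off-diagonal relations only produce scalar reorderings with coefficients in $\Ao$. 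Every ostensible denominator $1/(q-q^{-1})$ is absorbed into an expression $[q^h;0]_q$, which belongs to $\UAo^{0}$ by Lemma \ref{lemma for the tri dec of U(A_1)}(2); reorderings of $(q^h;0)_q$ past $e_i$ yield $(q^h;0)_q\, e_i = e_i\,(q^h;\alpha_i(h))_q$, which stays in $\UAo^{0}$ by part (1) of the same lemma. The $k_{\bar l}$--$e_i$ and $k_{\bar l}$--$f_i$ relations already have their right-hand sides inside $\UAo$ by inspection.

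For injectivity, observe that $\Ao = \C[q]_{(q-1)}$ is a discrete valuation ring with field of fractions $\C(q)$, so $\C(q)$ is a flat $\Ao$-module. Each of $\UAo^{\pm}$ and $\UAo^{0}$ sits inside the $\C(q)$-vector space $\Uqqn$ and is therefore torsion-free as an $\Ao$-module; consequently the natural map $\C(q) \otimes_{\Ao} \UAo^{\pm} \to \Uq^{\pm}$ is an isomorphism onto the $\C(q)$-span of $\UAo^{\pm}$, and similarly $\C(q) \otimes_{\Ao} \UAo^{0} \cong \Uq^{0}$. Applying $\C(q) \otimes_{\Ao} -$ to $\mu$ and using base change then converts $\mu$ into the multiplication map $\Uq^{-} \otimes_{\C(q)} \Uq^{0} \otimes_{\C(q)} \Uq^{+} \to \Uqqn$, which is an isomorphism by Theorem \ref{tri decom}. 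Since $\Ao$ is a DVR, tensor products of torsion-free $\Ao$-modules are torsion-free, so $\UAo^{-} \otimes_{\Ao} \UAo^{0} \otimes_{\Ao} \UAo^{+}$ injects into its $\C(q)$-localization, and the injectivity of $\mu$ follows.

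The main obstacle is the bookkeeping in the surjectivity step: one must verify for every commutator appearing in Theorem \ref{defining relations of Uqqn} that the resulting correction carries no denominator in $q-q^{-1}$ or $q+q^{-1}$ and indeed lies inside the appropriate $\Ao$-form. Lemma \ref{lemma for the tri dec of U(A_1)} is precisely what makes this work, by exhibiting the otherwise problematic quantities $[q^h;0]_q$ and $(q^h;n)_q$ as bona fide elements of $\UAo^{0}$.
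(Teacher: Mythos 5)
Your proof is correct and follows essentially the same route as the paper's: both rest on the commutation relations of Theorem \ref{defining relations of Uqqn} together with Lemma \ref{lemma for the tri dec of U(A_1)} to show that every reordering correction stays inside $\UAo^0$, and both ultimately reduce to the $\C(q)$ triangular decomposition of Theorem \ref{tri decom}. The only difference is in the treatment of injectivity: where the paper simply notes that the canonical isomorphism restricted to $\UAo$ has multiplication as its inverse, you justify injectivity of the multiplication map by base change along the flat extension $\Ao \subset \C(q)$ together with torsion-freeness over the discrete valuation ring $\Ao$ — a more explicit argument for the same point.
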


\begin{proof}
  Recall the canonical isomorphism
  $\Uq(\gg) \stackrel{\sim}{\longrightarrow} \Uq^- \ot \Uq^0 \ot \Uq^+$
  given by Theorem \ref{tri decom}. The following commutation
  relations hold:
  \beq \bea
  &e_i(q^h;0)_q =(q^h;-\al_i(h))_q e_i, \ \ \ \ e_{\bar i}(q^h;0)_q =(q^h;-\al_i(h))_q e_{\bar i},\\
  &(q^h;0)_q f_i=f_i(q^h;-\al_i(h))_q , \ \ \ \ (q^h;0)_q f_{\bar i}=f_{\bar i}(q^h;-\al_i(h))_q,\\
&e_if_i = f_i e_i + [q^{k_i -k_{i+1}};0]_q,\\
&e_{i+1}f_i=q^{-1}f_ie_{i+1}, \ \ e_i f_{i+1}=q f_{i+1}e_i, \ \ \
e_i f_j -f_j e_i=0 \; \mbox{ for } |i-j|>1,\\
&e_{\bar i}f_{\bar i}= - f_{\bar i} e_{\bar i} + [q^{k_i +k_{i+1}};0]_q +(q-q^{-1})k_{\overline i}k_{\overline{i+1}},\\
&e_{\overline{i+1}}f_{\bar i}=-q^{-1}f_{\bar i}e_{\overline{i+1}},
\ \ e_{\bar i} f_{\overline{i+1}}=-q f_{\overline{i+1}}e_{\bar i},
\ \ \ e_{\bar i} f_{\bar j} = -f_{\bar j} e_{\bar i}=0 \; \mbox{
for } |i-j|>1. \eea \eeq Together with Lemma \ref{lemma for the
tri dec of U(A_1)}, one can show that the image of the canonical
isomorphism lies inside $\UAo^- \ot \UAo^0 \ot \UAo^+$ when
restricted to $\UAo$. Its inverse map is given by multiplication.
Hence the two spaces are isomorphic as $\Ao$-modules.
\end{proof}

In what follows, $V^q$ is a highest weight module over
$\Uq(\gg)$ with highest weight $\la \in P$ generated by a finite
dimensional irreducible $\Uq^{\geq 0}$-submodule ${\bold v^q}$.
Then ${\bold v^q}$ is a finite dimensional irreducible
$\Cliffqla$-module. Since it is irreducible, it is generated by a
nonzero vector $v \in ({\bold v^q})_{\bar 0}$; i.e., ${\bold v^q}=\Cliffqla v$.
Note that \beq \dfrac{q^{2n}-q^{-2n}}{q^2
-q^{-2}}=q^{2n-2}+q^{2n-6}+ \cdots + q^{-2n+6}+q^{-2n+2} \in \Ao \
\ \text{ for } n \in \Z_{>0}. \eeq
We denote by
$\mbox{Cliff}_{\Ao}(\lambda)$ the $\Ao$-subalgebra of
$\Cliff_q(\la)$ generated by $\{ t_{\bar i} ~|~ i \in J \}$.

\begin{definition}
 Let $V^q$ be a highest weight $U_q(\gg)$-module generated by a
finite dimensional irreducible $\Uqgeq$-module ${\bold v^q}$ and
let $E^{\Ao}(\lambda)$ be the $\CliffAola$-submodule of ${\bold
v^q} \cong E^q(\lambda)$ generated by a nonzero element $v \in ({\bold v^q})_{\bar 0}$. The {\em $\Ao$-form} of  $V^q$ is
defined to be the $\UAo$-submodule $V_{\Ao}$ of $V^q$ generated by
$E^{\Ao}(\lambda)$.
\end{definition}
In what follows, $V^q$ will denote a a highest weight $U_q(\gg)$-module.
%Fix a nonzero homogeneous vector $v \in {\bold v^q}$ and define $W:=\CliffAola v $. By a similar manner in Remark \ref{Uqgeq0-module and Cliffq-module}, we can identify a finite dimensional irreducible $\CliffAola$-module with a finite dimensional irreducible $\UAo^{\geq 0}$-module.

\begin{proposition} \label{VAo_is_free_UAo-module}
  $\VAo =\UAo^- E^{\Ao}(\lambda)$.
\end{proposition}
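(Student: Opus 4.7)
The plan is to leverage the triangular decomposition $U_{\Ao} \cong U_{\Ao}^- \otimes U_{\Ao}^0 \otimes U_{\Ao}^+$ from Proposition \ref{tri decom of UAo}, reducing the problem to showing that $U_{\Ao}^+$ and $U_{\Ao}^0$ each preserve $E^{\Ao}(\lambda)$. Since $V_{\Ao} = U_{\Ao} \cdot E^{\Ao}(\lambda)$ by definition, and every element of $U_{\Ao}$ can be written as an $\Ao$-linear combination of products $y h x$ with $y \in U_{\Ao}^-$, $h \in U_{\Ao}^0$, $x \in U_{\Ao}^+$, the two containments $U_{\Ao}^+ E^{\Ao}(\lambda) \subseteq E^{\Ao}(\lambda)$ and $U_{\Ao}^0 E^{\Ao}(\lambda) \subseteq E^{\Ao}(\lambda)$ will immediately give $V_{\Ao} \subseteq U_{\Ao}^- E^{\Ao}(\lambda)$, while the reverse inclusion is trivial.

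First I would handle $U_{\Ao}^+$. By Proposition \ref{\Uq^0-module}, the finite dimensional irreducible $U_q^{\geq 0}$-module $\bold v^q$ equals its $\lambda$-weight space and the positive part $U_q^+$ acts trivially; hence $e_i$ and $e_{\bar i}$ annihilate every vector of $\bold v^q$, and in particular every vector of $E^{\Ao}(\lambda)$. Consequently any monomial in the $e_i, e_{\bar i}$ of positive degree acts as $0$ on $E^{\Ao}(\lambda)$, so $U_{\Ao}^+ E^{\Ao}(\lambda) = \Ao \cdot E^{\Ao}(\lambda) = E^{\Ao}(\lambda)$.

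Next I would show $U_{\Ao}^0 E^{\Ao}(\lambda) \subseteq E^{\Ao}(\lambda)$ by checking the three types of generators of $U_{\Ao}^0$ on $E^{\Ao}(\lambda) \subseteq \bold v^q$. The element $q^h$ acts as the scalar $q^{\lambda(h)} \in \Ao$. The element $(q^h;0)_q$ acts by the scalar
\begin{equation*}
(q^{\lambda(h)};0)_q \;=\; \frac{q^{\lambda(h)}-1}{q-1},
\end{equation*}
which lies in $\Ao$: for $\lambda(h) = n \geq 0$ it equals the polynomial $1+q+\cdots+q^{n-1}$, while for $\lambda(h) = -n < 0$ it equals $-q^{-n}(1+q+\cdots+q^{n-1})$, whose denominator $q^n$ does not vanish at $q=1$. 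Finally, $k_{\bar l}$ acts on $\bold v^q$ via its image $t_{\bar l} \in \Cliff_q(\lambda)$; since $E^{\Ao}(\lambda)$ is by construction a $\Cliff_{\Ao}(\lambda)$-submodule of $\bold v^q$, and $\Cliff_{\Ao}(\lambda)$ is generated by the $t_{\bar l}$, we have $k_{\bar l} E^{\Ao}(\lambda) \subseteq E^{\Ao}(\lambda)$. Combining these three observations and closing under multiplication and $\Ao$-linear combinations yields $U_{\Ao}^0 E^{\Ao}(\lambda) \subseteq E^{\Ao}(\lambda)$.

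Putting the pieces together via Proposition \ref{tri decom of UAo},
\begin{equation*}
V_{\Ao} \;=\; U_{\Ao}\, E^{\Ao}(\lambda) \;=\; U_{\Ao}^- U_{\Ao}^0 U_{\Ao}^+ \, E^{\Ao}(\lambda) \;\subseteq\; U_{\Ao}^- \, E^{\Ao}(\lambda) \;\subseteq\; V_{\Ao},
\end{equation*}
which establishes the desired equality. The only non-trivial point is the verification that $(q^h;0)_q$ acts on $\bold v^q$ by a scalar in $\Ao$, but this is settled by the explicit formulas above; everything else is a direct consequence of the structure of $\bold v^q$ as a $U_q^{\geq 0}$-module and of $E^{\Ao}(\lambda)$ as a $\Cliff_{\Ao}(\lambda)$-module.
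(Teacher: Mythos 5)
Your proof is correct and follows essentially the same route as the paper: both reduce the claim via the triangular decomposition of Proposition \ref{tri decom of UAo} to showing that $\UAo^+$ and $\UAo^0$ preserve $E^{\Ao}(\lambda)$, with $\UAo^+$ acting trivially by the highest weight property and $\UAo^0$ acting through scalars $q^{\lambda(h)}$, $(q^{\lambda(h)};0)_q \in \Ao$ and the $\Cliff_{\Ao}(\lambda)$-action. You merely spell out in more detail the steps the paper labels as clear, such as the membership of $(q^{\lambda(h)};0)_q$ in $\Ao$ and the role of the $t_{\bar l}$.
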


\begin{proof}
In view of Proposition \ref{tri decom of UAo}, it suffices to show
that $\UAo^+ E^{\Ao}(\lambda)=E^{\Ao}(\lambda)$ and $\UAo^0
E^{\Ao}(\lambda)=E^{\Ao}(\lambda)$. The first assertion is clear
by the definition of highest weight modules. For the second
assertion, we observe that
  \beq \bea
  q^h w&=q^{\la(h)}w,\\
  (q^h;0)_q w &=\dfrac{q^{\la(h)}-1}{q-1}w \ \ \ \text{for all } w \in E^{\Ao}(\lambda) .
 \eea \eeq
  Hence we obtain $\VAo=\UAo E^{\Ao}(\lambda)=\UAo^- E^{\Ao}(\lambda)$.
\end{proof}

For each $\mu \in P$, let us denote by $(V_{\Ao})_{\mu}$
the space $V_{\Ao} \cap V_{\mu}^q$. The following assertion can be
proved using the same arguments as in \cite[Proposition 3.3.6]{HK2002}.

\begin{proposition} \label{weight space decom of VAo}
 $V_{\Ao}$ has the weight space decomposition $V_{\Ao}=\bigoplus_{\mu \leq \la} (V_{\Ao})_{\mu}$.
\end{proposition}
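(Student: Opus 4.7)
The plan is to combine the inclusion $V_{\Ao} \subset \bigoplus_{\mu \leq \la} V_\mu^q$ with a Vandermonde argument based on the elements $(q^h;0)_q \in U_{\Ao}^0$ to extract individual weight components over $\Ao$.

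First I would establish the inclusion. By Proposition \ref{VAo_is_free_UAo-module}, $V_{\Ao} = U_{\Ao}^- E^{\Ao}(\la)$. Since $E^{\Ao}(\la) \subset V_\la^q$ and each generator $f_i,\, f_{\bar i}$ of $U_{\Ao}^-$ lowers weights by $\al_i \in Q_+$, every element of $V_{\Ao}$ is a finite sum of vectors lying in weight spaces $V_\mu^q$ with $\mu \leq \la$.

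Next I would show that if $v \in V_{\Ao}$ decomposes in $V^q$ as $v = v_{\mu_1}+\cdots+v_{\mu_k}$ with pairwise distinct weights $\mu_1,\ldots,\mu_k$, then every $v_{\mu_i} \in V_{\Ao}$. Since the $\mu_i$ are distinct elements of $P$, one can choose $h \in P^{\vee}$ so that the integers $\mu_1(h),\ldots,\mu_k(h)$ are pairwise distinct. Setting $a_i := (q^{\mu_i(h)}-1)/(q-1)$, the operator $(q^h;0)_q \in U_{\Ao}^0$ acts on $V_{\mu_i}^q$ as the scalar $a_i$, so applying its powers to $v$ yields the system
\begin{equation*}
(q^h;0)_q^{\,j}\, v \;=\; \sum_{i=1}^{k} a_i^{\,j}\, v_{\mu_i} \qquad (j=0,1,\ldots,k-1).
\end{equation*}
The coefficient matrix is Vandermonde with determinant $\prod_{i<l}(a_l - a_i)$, and
\begin{equation*}
a_l - a_i \;=\; \frac{q^{\mu_l(h)} - q^{\mu_i(h)}}{q-1}
\end{equation*}
evaluates at $q=1$ to $\mu_l(h) - \mu_i(h) \neq 0$, hence is a unit in $\Ao$. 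Inverting the Vandermonde matrix over $\Ao$ expresses each $v_{\mu_i}$ as an $\Ao$-linear combination of the vectors $(q^h;0)_q^{\,j}\, v \in V_{\Ao}$, proving $v_{\mu_i} \in V_{\Ao}$ and hence $V_{\Ao} = \bigoplus_{\mu \leq \la}(V_{\Ao})_\mu$.

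The main technical point is to use $(q^h;0)_q$ rather than $q^h$ itself: on $V_\mu^q$ the element $q^h$ acts by the scalar $q^{\mu(h)}$, and the differences $q^{\mu_l(h)}-q^{\mu_i(h)}$ all vanish at $q=1$, so a naive Vandermonde argument would fail inside the localization $\Ao = \C[q]_{(q-1)}$. Dividing by $q-1$ cancels exactly this zero, yielding the unit property above. This is precisely the reason $(q^h;0)_q$ was adjoined as a generator of $U_{\Ao}^0$, and the same trick should proceed in parallel with the proof of \cite[Proposition 3.3.6]{HK2002}.
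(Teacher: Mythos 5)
Your argument is correct and is exactly the argument the paper invokes: the paper's "proof" is a one-line reference to \cite[Proposition 3.3.6]{HK2002}, whose content is precisely the inclusion $V_{\Ao}\subset\bigoplus_{\mu\le\la}V^q_\mu$ followed by the separation of weight components via powers of $(q^h;0)_q$ and the invertibility over $\Ao$ of the resulting Vandermonde matrix. Your observation that the differences $\bigl(q^{\mu_l(h)}-q^{\mu_i(h)}\bigr)/(q-1)$ are units in $\Ao$ because they evaluate to $\mu_l(h)-\mu_i(h)\neq 0$ at $q=1$ is the key point, and you have it right.
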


\begin{proposition} \label{dim of weight space}
For each $\mu \in P$, the weight space $(V_{\Ao})_{\mu}$ is a free
$\Ao$-module with $\rank_{\Ao}(V_{\Ao})_{\mu}=\dim_{\C(q)}
V^q_{\mu}$. In particular,
$\rank_{\Ao}E^{\Ao}(\lambda)=\dim_{\C(q)} E^q(\lambda)$.
\end{proposition}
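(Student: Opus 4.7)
The plan is to exploit that $\Ao = \C[q]_{(q-1)}$ is a discrete valuation ring, hence a principal ideal domain: every finitely generated torsion-free $\Ao$-module is free. Accordingly, it suffices to verify that $(V_\Ao)_\mu$ is finitely generated and torsion-free over $\Ao$, and then compute its rank by extending scalars to $\C(q)$. Torsion-freeness is automatic, since $(V_\Ao)_\mu \subset V^q_\mu$ and the latter is a $\C(q)$-vector space on which multiplication by any nonzero element of $\Ao$ is injective.

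For finite generation I would argue in two steps. First, $E^{\Ao}(\lambda)$ is finitely generated over $\Ao$: it is cyclic as a $\Cliff_\Ao(\lambda)$-module, and $\Cliff_\Ao(\lambda)$ is spanned over $\Ao$ by the $2^n$ ordered monomials $t_{\bar{i_1}}\cdots t_{\bar{i_k}}$ with $1 \leq i_1 < \cdots < i_k \leq n$ (using the anticommutation $t_{\bar i}t_{\bar j} = -t_{\bar j}t_{\bar i}$ for $i\neq j$ together with $t_{\bar i}^2 \in \Ao \cdot \mathbf{1}$). Second, by Proposition \ref{VAo_is_free_UAo-module} we have $V_\Ao = U_\Ao^- E^{\Ao}(\lambda)$, and since every element of $E^{\Ao}(\lambda)$ has weight $\lambda$, the weight space $(V_\Ao)_\mu$ coincides with $(U_\Ao^-)_{\mu-\lambda}\cdot E^{\Ao}(\lambda)$. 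A PBW-type spanning argument, based on the commutation relations among the $f_i$ and $f_{\bar i}$ displayed in the proof of Proposition \ref{tri decom of UAo}, shows that $(U_\Ao^-)_{\mu-\lambda}$ is spanned over $\Ao$ by the finite set of ordered monomials in the $f_i, f_{\bar i}$ of weight $\mu-\lambda$. Finite generation of $(V_\Ao)_\mu$ follows.

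The rank computation uses that scalar extension $\C(q)\otimes_\Ao -$ is flat and preserves the weight grading, and that $V_\Ao$ is an $\Ao$-form of $V^q$ in the sense that $\C(q)\cdot V_\Ao = V^q$; thus $\C(q)\otimes_\Ao (V_\Ao)_\mu \simeq V^q_\mu$, and the rank of the free $\Ao$-module $(V_\Ao)_\mu$ equals $\dim_{\C(q)} V^q_\mu$. The ``in particular'' clause is then obtained by specializing to $\mu = \lambda$: the only weight-zero elements of $U_\Ao^-$ are scalars, hence $(V_\Ao)_\lambda = E^{\Ao}(\lambda)$, while $V^q_\lambda = E^q(\lambda)$ since $V^q = U_q^-{\bold v}^q$ and $U_q^-$ strictly lowers the weight outside its scalar part.

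The principal technical hurdle is making precise the PBW-type spanning statement for $U_\Ao^-$, namely that ordered monomials in the $f_i, f_{\bar i}$ span $U_\Ao^-$ as an $\Ao$-module (not merely $U_q^-$ as a $\C(q)$-vector space). This amounts to checking that every straightening identity used to reorder an arbitrary monomial has coefficients in $\Ao$, which is ensured by the $\Ao$-integrality of the defining relations in Theorem \ref{defining relations of Uqqn} together with the identities recorded in Lemma \ref{lemma for the tri dec of U(A_1)}.
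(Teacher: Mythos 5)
Your proof is correct and follows essentially the same route as the paper: both rest on $\Ao$ being a PID (so finitely generated torsion-free modules are free) together with the observation that the monomials $f_{\zeta}\, t_{\bar 1}^{\ep_1}\cdots t_{\bar n}^{\ep_n} v$ of weight $\mu$ lie in $(V_{\Ao})_{\mu}$ and span $V^q_{\mu}$ over $\C(q)$, so that the rank equals the dimension. The ``technical hurdle'' you flag is not really one: for finite generation of $(V_{\Ao})_{\mu}$ it suffices that the finitely many monomials in the $f_i, f_{\bar i}$ of weight $\mu-\la$ span $(U_{\Ao}^-)_{\mu-\la}$ over $\Ao$, and this holds for arbitrary (not necessarily ordered) monomials, so no straightening identities are needed.
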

\begin{proof}
Because $\Ao$ is a principal ideal domain, every finitely
generated torsion free module over $\Ao$ is free. Furthermore,
since $\C(q)$ is the field of quotients of the integral domain
$\Ao$, a finite subset of a $\C(q)$-vector space is linearly
independent over $\C(q)$ if and only if it is linearly independent
over $\Ao$. Thus it is enough to show that each $V^q_{\mu}$ has a
$\C(q)$-basis which is also contained in $(V_{\Ao})_{\mu}$. The highest weight
space ${\bold v^q}=E^q(\la)$ has a
linearly independent subset of $\{t_{\bar 1}^{\ep_1} t_{\bar
2}^{\ep_2}\cdots t_{\bar n}^{\ep_n} v ~|~ \ep_j=0 \ {\rm or} \ 1 \}$ which
generates $E^q(\la)$ over $\C(q)$, since $E^q(\la)=\Cliff_q(\la)
v$. By definition, this subset is contained in $E^{\Ao}(\la)$. For
$V^q_{\mu}$, it is easy to show that there is a basis of
$V^q_{\mu}$ whose elements are of the form $f_\zeta t_{\bar 1}^{\ep_1} t_{\bar
2}^{\ep_2}\cdots t_{\bar n}^{\ep_n} v$, where $f_{\zeta}$
are monomials in $f_i$ and $f_{\bar j}$. This basis is also contained in
$(V_{\Ao})_{\mu}$, which proves the proposition.
\end{proof}

\begin{corollary}
The map $\phi:\C(q) \ot_{\Ao}V_{\Ao} \ra V^q$ given by $f \ot v
\map fv$ {\rm(}$f \in \C(q), v \in V_{\Ao}${\rm)} is a
$\C(q)$-linear isomorphism.
\end{corollary}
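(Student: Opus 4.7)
The plan is to deduce the corollary as an almost formal consequence of Propositions \ref{weight space decom of VAo} and \ref{dim of weight space}. First I would note that $\phi$ preserves the weight grading: if $v \in (V_{\Ao})_\mu$ and $f \in \C(q)$, then $\phi(f \otimes v) = fv \in V^q_\mu$. Since tensor product commutes with direct sums, Proposition \ref{weight space decom of VAo} gives
\begin{equation*}
\C(q) \otimes_{\Ao} V_{\Ao} \;=\; \bigoplus_{\mu \leq \la} \C(q) \otimes_{\Ao} (V_{\Ao})_{\mu},
\end{equation*}
so it suffices to show that each restriction $\phi_{\mu} : \C(q) \otimes_{\Ao} (V_{\Ao})_{\mu} \to V_{\mu}^q$ is a $\C(q)$-linear isomorphism.

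Next, I would invoke Proposition \ref{dim of weight space}: the weight space $(V_{\Ao})_{\mu}$ is a free $\Ao$-module of rank $r_{\mu} := \dim_{\C(q)} V_{\mu}^q$. Choose an $\Ao$-basis $\{v_1,\ldots,v_{r_{\mu}}\}$ of $(V_{\Ao})_{\mu}$ of the form constructed in the proof of Proposition \ref{dim of weight space}; in particular, this set lies in $V_{\mu}^q$ and is $\C(q)$-linearly independent there, so by the dimension count it is actually a $\C(q)$-basis of $V_{\mu}^q$. Then $\{1 \otimes v_k\}_{k=1}^{r_{\mu}}$ is a $\C(q)$-basis of $\C(q) \otimes_{\Ao} (V_{\Ao})_{\mu}$ (by freeness), and $\phi_{\mu}(1 \otimes v_k) = v_k$. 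Hence $\phi_{\mu}$ maps a $\C(q)$-basis to a $\C(q)$-basis, and is therefore an isomorphism.

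There is no real obstacle here: the content is concentrated in the earlier propositions, especially in the careful choice of a basis for $(V_{\Ao})_{\mu}$ that simultaneously serves as a $\C(q)$-basis of $V_{\mu}^q$. Once that basis is in hand, the assertion reduces to matching bases on the two sides of $\phi_{\mu}$, which is immediate.
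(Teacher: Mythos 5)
Your proof is correct and is precisely the argument the paper intends: the corollary is stated without proof as an immediate consequence of Propositions \ref{weight space decom of VAo} and \ref{dim of weight space}, and your weight-by-weight reduction followed by matching an $\Ao$-basis of $(V_{\Ao})_{\mu}$ with a $\C(q)$-basis of $V^q_{\mu}$ is exactly how that implication is meant to be filled in.
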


Let $\Jo$ be the ideal of $\Ao$ generated by $q-1$. Then there is
a canonical isomorphism of fields \beq \Ao/\Jo
\stackrel{\sim}{\longrightarrow} \C \ \ \ \text{given by  }
f(q)+\Jo \map f(1). \eeq
Define the $\C$-linear vector spaces \beq
\bea
U_1 &= (\Ao/\Jo)\ot_{\Ao} U_{\Ao}, \\
V^1 &=(\Ao/\Jo)\ot_{\Ao} V_{\Ao}. \eea \eeq
Then $V^1$ is
naturally a $U_1$-module. Note that $U_1 \cong U_{\Ao} / \Jo \UAo$
and $V^1 \cong V_{\Ao}/\Jo \VAo$. We use the bar notation for the
images under these maps. The passage under these maps is referred
to as taking the {\it classical limit}.

Since $\VAo = \UAo E^{\Ao}(\la)$, we have: \beq V^1 \cong
V_{\Ao}/\Jo \VAo =\UAo E^{\Ao}(\la) / \Jo \UAo
E^{\Ao}(\la)=(\UAo/\Jo \UAo ) \cdot (E^{\Ao}(\la)/\Jo
E^{\Ao}(\la)). \eeq
Hence $V^1$ is generated by $E^{\Ao}(\la)/\Jo
E^{\Ao}(\la)$ over $U^1$. For each $\mu \in P$, denote by
$V_{\mu}^1$ the space $(\Ao/\Jo) \ot_{\Ao} (V_{\Ao})_{\mu} \cong
(V_{\Ao})_{\mu}/\Jo(V_{\Ao})_{\mu}$.
\begin{proposition} \hfill \label{dim of weight space 2}
  \bee[{\quad \rm(1)}]
  \item $V^1=\bigoplus_{\mu \leq \la} V_{\mu}^1$
  \item For each $\mu \in P$, $\dim_{\C}V_{\mu}^1=\rank_{\Ao}(\VAo)_{\mu}$.
  \eee
\end{proposition}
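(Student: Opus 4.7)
The plan is to deduce both parts directly from the two structural results just proved, using only that tensoring with $\Ao/\Jo$ is right exact and commutes with direct sums.

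For part (1), I would start from the weight space decomposition $V_{\Ao} = \bigoplus_{\mu \leq \la} (V_{\Ao})_\mu$ supplied by Proposition \ref{weight space decom of VAo} and apply the functor $(\Ao/\Jo) \otimes_{\Ao} -$. Since tensor product commutes with arbitrary direct sums of modules, one obtains
$$V^1 = (\Ao/\Jo) \otimes_{\Ao} V_{\Ao} = \bigoplus_{\mu \leq \la} \bigl( (\Ao/\Jo) \otimes_{\Ao} (V_{\Ao})_\mu \bigr) = \bigoplus_{\mu \leq \la} V_\mu^1,$$
which is the claimed decomposition. The directness of the right-hand sum is automatic from the commutation of tensor product with direct sums, so there is no delicate verification here.

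For part (2), the key input is Proposition \ref{dim of weight space}, which asserts that $(V_{\Ao})_\mu$ is a free $\Ao$-module of rank $r := \dim_{\C(q)} V^q_\mu$. Combining this with the identification $\Ao/\Jo \cong \C$ gives
$$V_\mu^1 = (\Ao/\Jo) \otimes_{\Ao} (V_{\Ao})_\mu \cong (\Ao/\Jo)^{\oplus r} \cong \C^{\oplus r},$$
so that $\dim_\C V_\mu^1 = r = \rank_{\Ao} (V_{\Ao})_\mu$, as required.

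Honestly, I do not expect any serious obstacle at this stage: the hard analytic input, namely that each $(V_{\Ao})_\mu$ is free of finite rank and the identification of that rank with $\dim_{\C(q)} V^q_\mu$, has already been carried out in Proposition \ref{dim of weight space} (using that $\Ao$ is a PID so finitely generated torsion-free modules are free, together with an explicit construction of a common $\Ao$-basis for $(V_{\Ao})_\mu$ built from monomials in the $f_i, f_{\bar j}$ applied to a basis of $E^{\Ao}(\la)$). The present proposition is essentially the formal reduction modulo $\Jo$ of those facts, so the proof will amount to little more than assembling the two paragraphs above.
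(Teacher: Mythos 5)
Your proof is correct and follows essentially the same route as the paper: part (1) is obtained by applying $(\Ao/\Jo)\otimes_{\Ao}-$ to the decomposition of Proposition \ref{weight space decom of VAo}, and part (2) is the standard base-change computation for the free $\Ao$-modules $(V_{\Ao})_{\mu}$ of Proposition \ref{dim of weight space}, which is exactly the argument of \cite[Lemma 3.4.1]{HK2002} that the paper invokes. Nothing is missing.
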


\begin{proof}
The first assertion follows from Proposition \ref{weight space
decom of VAo}. Using the same argument as in \cite[Lemma
3.4.1]{HK2002}, we can prove the second assertion.
\end{proof}

Let $\bar h \in U_1$ be the classical limit of $(q^h;0)_q \in
\UAo$. Using \cite[Lemma 3.4.3]{HK2002}, we have:
\begin{lemma} \hfill
\bee[{\quad \rm(1)}]
\item For all $h \in p^{\vee}$, we have $\overline{q^h}=1$.
\item For any $h, h' \in P^{\vee}$, $\overline{h+h'}=\overline h + \overline{h'}$.
\eee
\end{lemma}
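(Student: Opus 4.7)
The plan is to work directly from the definition $(q^h;0)_q = \frac{q^h-1}{q-1}$ and exploit the fact that the classical limit kills the ideal $\mathbf{J}_1$ generated by $q-1$. Both parts reduce to observing that certain obvious algebraic identities exhibit the relevant differences as elements of $\mathbf{J}_1 U_{\mathbf{A}_1}$.

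For part (1), I would simply note the identity $q^h - 1 = (q-1)(q^h;0)_q$ inside $U_{\mathbf{A}_1}$. Since $(q^h;0)_q \in U_{\mathbf{A}_1}$ by construction, the right-hand side lies in $\mathbf{J}_1 U_{\mathbf{A}_1}$, whence $\overline{q^h - 1} = 0$ in $U_1$ and so $\overline{q^h} = \overline{1} = 1$.

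For part (2), I would compute the discrepancy
\begin{equation*}
(q^{h+h'};0)_q - (q^h;0)_q - (q^{h'};0)_q = \frac{q^h q^{h'} - q^h - q^{h'} + 1}{q-1} = \frac{(q^h-1)(q^{h'}-1)}{q-1} = (q-1)\,(q^h;0)_q (q^{h'};0)_q.
\end{equation*}
The last expression is manifestly in $\mathbf{J}_1 U_{\mathbf{A}_1}$, so passing to the classical limit we obtain $\overline{h+h'} = \bar h + \overline{h'}$ in $U_1$.

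There is no real obstacle here; both claims are immediate consequences of the definitions and the observation that $q^h - 1$ and $(q^h-1)(q^{h'}-1)$ carry a factor of $q-1$ in $U_{\mathbf{A}_1}$. The only thing to verify is that the intermediate expressions actually lie in $U_{\mathbf{A}_1}$, which is guaranteed by Lemma \ref{lemma for the tri dec of U(A_1)} and the fact that $(q^h;0)_q$ is one of the chosen generators of $U_{\mathbf{A}_1}^0$.
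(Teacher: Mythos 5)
Your proof is correct, and it is exactly the standard computation: the paper itself gives no argument here but simply cites \cite[Lemma 3.4.3]{HK2002}, whose proof proceeds by the same two identities $q^h-1=(q-1)(q^h;0)_q$ and $(q^{h+h'};0)_q-(q^h;0)_q-(q^{h'};0)_q=(q-1)(q^h;0)_q(q^{h'};0)_q$, both visibly lying in $\Jo\UAo$. Nothing is missing.
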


\begin{theorem} \hfill \label{from UAo to Uo}
\bee[{\quad \rm(1)}]
\item The elements $\overline{\Ei}, \overline{\Eib}, \overline{\Fi}, \overline{\Fib}$,
{\rm(}$i \in I${\rm)}, $\overline{\klb}$ {\rm(}$l \in J${\rm)}
and $\overline{h}$ {\rm(}$h \in P^{\vee}${\rm)} satisfy the defining relations of $U(\gg)$.
Hence there exists a surjective $\C$-algebra homomorphism
$\psi:U(\gg) \ra U_1$ and the $U_1$-module $V^1$ has a $U(\gg)$-module structure.

\item For each $\mu \in P$ and $h \in P^{\vee}$,
the element $\overline h$ acts on $V^1_{\mu}$ as scalar
multiplication by $\mu(h)$. So $V_{\mu}^1$ is the $\mu$-weight
space of the $U(\gg)$-module $V^1$.

\item There is an isomorphism $\Cliff(\lambda) \overset{\sim} \longrightarrow
\Cliff_1(\lambda):=\Cliff_{\Ao}(\lambda)/\Jo \Cliff_{\Ao}(\lambda)$.

\item As a $U(\gg)$-module, $V^1$ is a highest weight module or
the sum of two highest weight modules with highest weight $\la \in P$
\eee
\end{theorem}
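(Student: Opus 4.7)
The plan is to carry out the four assertions in order, relying on Theorem~\ref{defining relations of Uqqn}, the $\Ao$--form $\UAo$, and the classical limit machinery set up in this section.

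For (1), I would verify that the images $\overline{\Ei}, \overline{\Eib}, \overline{\Fi}, \overline{\Fib}, \overline{\klb}, \overline{h}$ in $U_1$ satisfy each defining relation of $U(\gg)$ from the proposition of Section~\ref{section_q_n}, by passing $q \mapsto 1$ in the corresponding relation of $U_q(\gg)$ from Theorem~\ref{defining relations of Uqqn}. The dictionary is $\overline{q^h} = 1$, $\overline{(q^h;0)_q} = \overline{h}$, $\overline{[q^h;0]_q} = \overline{h}$ (both via the identities in the proof of Lemma~\ref{lemma for the tri dec of U(A_1)}), $\overline{q-q^{-1}} = 0$, and $\overline{q+q^{-1}} = 2$. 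For instance, $k_{\bar i}^2 = (q^{2k_i}-q^{-2k_i})/(q^2-q^{-2})$ specializes to $\overline{k_{\bar i}}^{\,2} = \overline{k_i}$, i.e., $[\overline{k_{\bar i}}, \overline{k_{\bar i}}] = 2\overline{k_i}$ in the super sense, matching $[k_{\bar i}, k_{\bar j}] = 2\delta_{ij} k_i$; the quantum Serre relations $qa^2b - (q+q^{-1})aba + q^{-1}ba^2 = 0$ collapse to $[\overline{a},[\overline{a},\overline{b}]] = 0$; and relations such as $e_if_{\bar i} - f_{\bar i}e_i = q^{-k_{i+1}}k_{\bar i} - k_{\overline{i+1}}q^{-k_i}$ become $[\overline{e_i},\overline{f_{\bar i}}] = \overline{k_{\bar i}} - \overline{k_{\overline{i+1}}}$. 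Once the relations are checked, the induced map $\psi: U(\gg) \to U_1$ is a well-defined surjective algebra homomorphism, since the barred generators visibly generate $U_1$.

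For (2), on $(V_\Ao)_\mu$ the element $(q^h;0)_q = (q^h-1)/(q-1)$ acts as scalar multiplication by $(q^{\mu(h)}-1)/(q-1) \in \Ao$, whose value at $q=1$ is $\mu(h)$; hence $\overline{h}$ acts on $V^1_\mu$ as $\mu(h)\cdot\mathrm{Id}$. For (3), $\CliffAola$ is generated by $t_{\bar i}$ ($i \in J$) with relations $t_{\bar i}t_{\bar j} + t_{\bar j}t_{\bar i} = 2\delta_{ij}(q^{2\la_i}-q^{-2\la_i})/(q^2-q^{-2})$; reducing modulo $\Jo$ gives $\{\overline{t_{\bar i}},\overline{t_{\bar j}}\} = 2\delta_{ij}\la_i$, which coincide with the defining relations of $\Cliff(\la)$ since $F_\la(k_{\bar i},k_{\bar j}) = \la([k_{\bar i},k_{\bar j}]) = 2\delta_{ij}\la_i$. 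This yields a surjection $\Cliff(\la) \to \Cliff_1(\la)$, and a dimension count ($2^n$ on either side, bases indexed by subsets of $\{1,\dots,n\}$) promotes it to an isomorphism.

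For (4), $V^1 = U(\gg)\cdot\overline{E^\Ao(\la)}$ by Proposition~\ref{VAo_is_free_UAo-module} and the surjectivity of $\psi$. By (2), $\overline{E^\Ao(\la)} \subset V^1_\la$, and since $E^\Ao(\la)$ is annihilated by $\UAo^+$, the image is a $U(\gb_+)$-submodule on which $\gh_{\bar 1}$ acts through $\Cliff_1(\la) \cong \Cliff(\la)$ via (3). The classical analogue of Corollary~\ref{cliff_mod} says that every $\Cliff(\la)$-module is completely reducible and that the simple $\Z_2$-graded $\Cliff(\la)$-modules are at most ${\bold v}(\la)$ and $\Pi{\bold v}(\la)$. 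Combining this with the dimension equality $\dim_\C\overline{E^\Ao(\la)} = \dim_{\C(q)}E^q(\la)$ from Proposition~\ref{dim of weight space}, one sees that $\overline{E^\Ao(\la)}$ is either a single simple $\Cliff(\la)$-module or a direct sum of exactly two simple $\Cliff(\la)$-modules (the latter arises precisely when the quantum irreducible has strictly larger dimension than the classical one, e.g., when $|\la|$ is even and $\overline{\Delta(\la)} \neq \bar{1}$ in $\C(q)$). In the first case $V^1$ is itself a highest weight $U(\gg)$-module with highest weight $\la$; in the second case each simple summand generates a highest weight $U(\gg)$-submodule and $V^1$ is the sum of these two modules. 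The main obstacle is the tedious case-by-case verification in (1), where every relation of Theorem~\ref{defining relations of Uqqn} must be individually specialized; conceptually, the subtle point is (4), but it becomes transparent once the quantum-versus-classical Clifford dimensions from Corollary~\ref{cliff_mod} are contrasted.
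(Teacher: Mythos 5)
Your proposal is correct and follows essentially the same route as the paper: specialize each quantum relation at $q=1$ using the dictionary $\overline{q^h}=1$, $\overline{(q^h;0)_q}=\overline{h}$ for (1), the action of $(q^h;0)_q$ on weight spaces for (2), a surjection-plus-dimension-count for (3), and the comparison of $\dim E^q(\la)$ with the dimensions of simple $\Z_2$-graded $\Cliff(\la)$-modules (together with parity invariance of $E^q(\la)$ in the even, non-square-discriminant case) for (4). The only cosmetic difference is that the paper explicitly invokes the Jacobi identity to convert the classical relations $[e_{\bar i},[e_i,e_j]]=0$ into the form $[e_i,[e_{\bar i},e_j]]=0$ that matches the specialized quantum Serre relations, a step subsumed in your ``case-by-case verification.''
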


\begin{proof}
\bee[{\quad \rm(1)}]
\item  The first relation for $U(\gg)$ is trivial. Since
\beq \bea
\qhh e_i -e_i \qhh &= \Ei (q^h;\al_i(h))_q -\Ei \qhh \\
                   &= \dfrac{q^{\al_i(h)}-1}{q-1}e_i q^h,
\eea \eeq we obtain $[\oh, \overline{e_i}]=\al_i(h)
\overline{e_i}$ by letting $q \rightarrow 1$. Similarly, \beq
[\oh, \oEib]=\al_i(h)\oEib, \ \ [\oh,
\overline{f_i}]=-\al_i(h)\overline{f_i}, \ \ [\oh,
\oFib]=-\al_i(h)\oFib \ \ \text{and} \ \  [\oh,\oklb]=0. \eeq

We have \beq \Ei \Fi -\Fi \Ei= [q^{h_i};0]_q = \dfrac{q}{q+1}
(1+q^{-h_i}) (q^{h_i};0)_q . \eeq Taking the classical limit to both sides above leads to
 $\oEi \oFi -\oFi \oEi =\dfrac{1}{2} 2
\overline{h_i}=\overline{h_i}$.

Also \beq
 k_{\bar i}^2=[q^{2k_i};0]_{q^2}=q^2\dfrac{q^2-1}{q^4-1}(1+q^{-2k_i})\dfrac{1}{q+1}(q^{2k_i};0)_q.
 \eeq
When we take $q \rightarrow 1$, we obtain $\overline{k_{\bar i}}^2
= \overline{k_{i}}.$

Since we can obtain the following relations in $U(\gg)$ by the
Jacobi identity,
 \beq \bea
 \ [ e_{\bar{i}}, [e_i,e_j]] &=[[e_{\bar i}, e_i], e_j]+[e_i,[e_{\bar i}, e_j]] = [e_i,[e_{\bar i}, e_j]],  \ \ \ \text{for  } |i-j|=1, \eea \eeq
in order to prove the corresponding relations in $U_1$, it suffices to show that
$[\oEi,[\oEib,\oEj]]=0$. The latter relation can
be checked easily by letting $q \to 1$. The rest of the relations can be derived in a
similar manner.

Therefore, there exists a surjective algebra homomorphism
$\psi:U(\gg) \ra U_1$ defined by $e_i \map \oEi, e_{\bar i} \map
\oEib, f_i \map \oFi, f_{\bar i} \map \oFib, h \map \oh, k_{\bar
l} \map \oklb$ ($i \in I, l \in J$), which can be used to define a
$U(\gg)$-module structure on $V^1$.

\item For $v \in (V_{\Ao})_{\mu}$ and $h \in P^{\vee}$, we have
\beq \qhh v =\dfrac{q^{\mu(h)}-1}{q-1}v. \eeq Taking the classical
limit of both sides yields our assertion.

\item Note that
$\overline{t_{\bar i}} \overline{t_{\bar j}} + \overline{t_{\bar
j}} \overline{t_{\bar i}} =2\delta_{ij} \la_i$ in $\Cliff_{1} (\lambda)$ and $\Cliff(\lambda)$ is
the associative $\C$-algebra with ${\bold 1}$ generated by
$\{k_{\bar i}~|~ i \in J \}$ with defining relations
$k_{\bar i} k_{\bar j} + k_{\bar j} k_{\bar i}=2\delta_{ij} \la_i$. Thus we have a surjective $\C$-algebra
homomorphism $\Cliff(\la) \rightarrow \Cliff_1 (\la)$. Observe
that
    \beq\bea
    \dim_{\C} \Cliff_1(\la) &= \rank_{\Ao} \Cliff_{\Ao}(\la)\\
                            &= \dim_{\C(q)} \Cliff_q(\la)\\
                            &= \dim_{\C} \Cliff(\la).
    \eea\eeq
The first two equalities follow by using the same reasoning as in
Proposition \ref{dim of weight space 2} and Proposition \ref{dim
of weight space}, respectively. It is well known that the
dimension of the Clifford algebra associated with a symmetric
bilinear form on a vector space of dimension $k$ is $2^k$. This result
holds for any base field of characteristic different from $2$. Thus we proved the last equality.

\item $V^q$ is generated by a finite dimensional irreducible $U_q^{\geq 0}$-submodule ${\bold v^q} \cong E^q(\la)$ up to $\Pi$. By Corollary \ref{cliff_mod}
    $$\dim E^q(\la)=\begin{cases}
    2^k   &\text{if } | \lambda| = 2k  \text{ and } \overline{\Delta(\lambda)} = \bar{1},\\
    2^{k+1}    &\text{if } | \lambda| = 2k \text{ and } \overline{\Delta(\lambda)} \neq \bar{1},\\
    2^{k+1}    &\text{if } | \lambda| = 2k+1.
  \end{cases}$$
It is well known that the dimension of the $\Z_2$-graded irreducible
$\Cliff(\la)$-modules is $2^{\left[ \frac{|\lambda| - 1}{2}\right]}  | 2^{\left[ \frac{|\lambda| - 1}{2}\right]} $ (see, for example, \cite{ABS}). With this in mind we deduce that
$E^{\Ao}(\la)/\Jo E^{\Ao}(\la)$ is an irreducible
$\Cliff(\la)$-module when $|\la|=2k+1$ or $|\la|=2k$ and
$\overline{\Delta(\lambda)}=1$, and the direct sum of two
irreducible $\Cliff(\la)$-modules otherwise. Since $E^q(\la)$ is a
parity invariant module over $\Cliff_q(\la)$ for $| \la |=2k$ and
$\overline{\Delta(\lambda)} \neq \bar1$, $E^{\Ao}(\la)/\Jo
E^{\Ao}(\la)$ is a parity invariant
$\Cliff(\lambda)$-module as well. Hence $E^{\Ao}(\la)/\Jo E^{\Ao}(\la)
={\bold v}(\lambda) \op \Pi {\bold v}(\lambda)$ for some
irreducible $\Cliff(\la)$-module ${\bold v}(\lambda)$. By
definition, $V^1$ is a highest weight $U(\gg)$-module generated by
$E^{\Ao}(\la)/\Jo E^{\Ao}(\la)$ or the sum of two highest weight
modules generated by ${\bold v}(\la)$ and $\Pi {\bold v}(\la)$ for
some irreducible $\Cliff(\la)$-module ${\bold v}(\la)$. \eee
\end{proof}

By Propositions \ref{dim of weight space} and \ref{dim of weight
space 2} and Theorem \ref{from UAo to Uo}, we obtain the following
identity between the characters of a highest weight $U(\gg)$-module and a highest weight
$U_q(\gg)$-module.
\begin{proposition} \label{ch_Vo=ch_V^q}
  $\ch V^1=\ch V^q$.
\end{proposition}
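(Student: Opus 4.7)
The plan is to string together the dimension counts that have already been established in Propositions \ref{dim of weight space} and \ref{dim of weight space 2}, together with the identification of weight spaces in Theorem \ref{from UAo to Uo}(2), so that essentially no new computation is required.

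First I would fix $\mu \in P$ and compare the three weight spaces $V^q_\mu$, $(V_{\Ao})_\mu = V_{\Ao} \cap V^q_\mu$, and $V^1_\mu = (\Ao/\Jo) \ot_{\Ao} (V_{\Ao})_\mu$. Proposition \ref{dim of weight space} tells us that $(V_{\Ao})_\mu$ is a free $\Ao$-module of rank $\dim_{\C(q)} V^q_\mu$; and Proposition \ref{dim of weight space 2}(2) tells us that
\begin{equation*}
\dim_{\C} V^1_\mu = \rank_{\Ao} (V_{\Ao})_\mu.
\end{equation*}
Chaining the two equalities yields $\dim_{\C} V^1_\mu = \dim_{\C(q)} V^q_\mu$ for every $\mu \in P$.

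Next I would invoke Theorem \ref{from UAo to Uo}(2), which guarantees that $V^1_\mu$ coincides with the honest $\mu$-weight space of $V^1$ viewed as a $U(\gg)$-module (the element $\overline h$ acts on it by the scalar $\mu(h)$). Together with the weight-space decomposition $V^1 = \bigoplus_{\mu \leq \la} V^1_\mu$ of Proposition \ref{dim of weight space 2}(1) and the obvious weight-space decomposition of $V^q$, the equality of dimensions $\mu$ by $\mu$ translates directly into
\begin{equation*}
\ch V^1 \;=\; \sum_{\mu \leq \la} (\dim_{\C} V^1_\mu)\, e^\mu \;=\; \sum_{\mu \leq \la} (\dim_{\C(q)} V^q_\mu)\, e^\mu \;=\; \ch V^q.
\end{equation*}

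There is no real obstacle: all the substantive work was done in building the $\Ao$-form (showing that $(V_{\Ao})_\mu$ is free of the right rank, which ultimately uses the triangular decomposition of $\UAo$ and the PBW-type basis argument sketched in Proposition \ref{dim of weight space}) and in checking that the classical limit functor $(\Ao/\Jo) \ot_{\Ao} -$ is exact on finitely generated free $\Ao$-modules, which underlies Proposition \ref{dim of weight space 2}. The character identity is then a one-line bookkeeping consequence.
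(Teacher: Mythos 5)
Your proof is correct and follows exactly the route the paper takes: the paper derives this proposition precisely by combining Propositions \ref{dim of weight space} and \ref{dim of weight space 2} with Theorem \ref{from UAo to Uo}, which is the chain of dimension equalities and the weight-space identification you spell out. No gaps.
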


\begin{corollary}
$V^{q}(\lambda)$ is finite dimensional if and only if $\lambda \in
\Lambda^{+}$.
\end{corollary}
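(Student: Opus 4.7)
The plan is to deduce both directions from the classical limit machinery developed in Section \ref{section_limit}, reducing them to the already-established classical analogues in Proposition \ref{f.d. irred g-mod} and Proposition \ref{f.d. h.w. g-mod}.

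For the forward implication, suppose $V^q(\lambda)$ is finite dimensional. I would set $V^q := V^q(\lambda)$, form its $\Ao$-form $\VAo$, and pass to the classical limit $V^1$. Proposition \ref{ch_Vo=ch_V^q} gives $\ch V^1 = \ch V^q(\lambda)$, so $V^1$ is finite dimensional. By Theorem \ref{from UAo to Uo}(4), $V^1$ is either a highest weight $U(\gg)$-module of highest weight $\lambda$ or a direct sum of two such modules, and in either case $V(\lambda)$ arises as a quotient of a nonzero finite dimensional highest weight constituent (using that such a constituent is a quotient of $W(\lambda)$, which has unique maximal submodule $N(\lambda)$). Hence $V(\lambda)$ itself is finite dimensional, and Proposition \ref{f.d. irred g-mod}(3) forces $\lambda \in \Lambda^+$.

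For the reverse implication, assume $\lambda \in \Lambda^+$ and again form $\VAo$ and $V^1$ starting from $V^q(\lambda)$. By Proposition \ref{defining_rel_Vq(la)}, $f_i^{\lambda(h_i)+1}$ annihilates the entire highest weight space $E^q(\lambda)$ of $V^q(\lambda)$, hence its $\Ao$-submodule $E^{\Ao}(\lambda) \subset \VAo$. Reducing modulo $\Jo$ yields $\overline{f_i}^{\lambda(h_i)+1} \bar v = 0$ for every $\bar v \in E^{\Ao}(\lambda)/\Jo E^{\Ao}(\lambda)$, which by the proof of Theorem \ref{from UAo to Uo}(4) contains the highest weight space of each highest weight constituent of $V^1$. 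Applying Proposition \ref{f.d. h.w. g-mod} to each such constituent shows that $V^1$ is finite dimensional, and Proposition \ref{ch_Vo=ch_V^q} returns finite dimensionality to $V^q(\lambda)$.

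The main obstacle is the bookkeeping in transporting the $f_i$-annihilation relation from $V^q(\lambda)$ through $\VAo$ to $V^1$: one must check that $f_i \in \UAo$ and $E^{\Ao}(\lambda) \subset \VAo$ so that $f_i^{\lambda(h_i)+1} E^{\Ao}(\lambda) = 0$ genuinely lives in $\VAo$ and survives reduction mod $\Jo$, and moreover that the highest weight space of each highest weight constituent of $V^1$ is captured by $\overline{E^{\Ao}(\lambda)}$. Once these verifications are in place, both implications convert into their classical counterparts.
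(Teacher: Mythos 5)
Your proof is correct and follows essentially the same route as the paper: both directions pass to the classical limit $V^1$, use Proposition \ref{ch_Vo=ch_V^q} to transfer finite dimensionality, and invoke Proposition \ref{defining_rel_Vq(la)} together with Proposition \ref{f.d. h.w. g-mod} for sufficiency and Proposition \ref{f.d. irred g-mod} for necessity. The only cosmetic difference is that you argue the necessity direction directly while the paper argues by contraposition; the underlying facts are identical.
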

\begin{proof}
Let $V^{q}=V^q(\lambda)$. If $\lambda \in \Lambda^+$, then we have
$f_i^{\la(h_i)+1}v=0$ for all $v \in V^q_\lambda$ by Proposition
\ref{defining_rel_Vq(la)}. Taking the classical limit, we have
$\bar{f}_i^{\la(h_i)+1}\bar{v}=0$ for all $\bar{v} \in
V^1_\lambda$. Because $V^1$ is a highest weight module or the sum
of two highest weight modules, it is finite dimensional by
Proposition \ref{f.d. h.w. g-mod}, and hence $V^q$ is finite
dimensional by Proposition \ref{ch_Vo=ch_V^q}. Conversly, assume
that $\lambda$ is not in $\Lambda^{+}$. Then $V^1$ has a submodule
which is a highest weight module and whose irreducible quotient is
isomorphic to an irreducible highest weight module with highest
weight $\lambda$.  It is not finite dimensional by (2) of
Proposition \ref{f.d. irred g-mod}. Again by Proposition \ref{ch_Vo=ch_V^q},
$V^q$ cannot be finite dimensional.
\end{proof}

\begin{theorem} \label{V1_is_Uqn-module}
  If $\la \in \Lambda^+ \cap P_{\geq 0}$ and $V^q$ is
  the irreducible highest weight $\Uqqn$-module $V^q(\la)$
  with highest weight $\la$, then $V^1$ is isomorphic to
\begin{enumerate}[{\quad \rm(1)}]
\item $V(\la)$ or $\Pi V(\la)$ if $| \lambda| = 2k$  and $\overline{\Delta(\lambda)} = \bar{1}$,

\item $V(\la) \op \Pi V(\la)$ if $| \lambda| = 2k$ and
$\overline{\Delta(\lambda)} \neq \bar{1}$ {\rm(}in particular, if $\lambda_1 >....> \lambda_{2k} >0${\rm)},

\item $V(\la) \cong \Pi V(\la)$ if $|\lambda| = 2k+1$.
\end{enumerate}
Hence, $\ch V^q(\la)= \begin{cases}
     \ch V(\la)  &\text{if } | \lambda| = 2k  \text{ and } \overline{\Delta(\lambda)} = \bar{1},\\
    2 \ch V(\la) &\text{if } | \lambda| = 2k \text{ and } \overline{\Delta(\lambda)} \neq \bar{1},\\
    \ch V(\la)   &\text{if } | \lambda| = 2k+1.
   \end{cases} $
\end{theorem}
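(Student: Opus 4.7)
The plan is to combine the classical limit machinery from Theorem \ref{from UAo to Uo} with the finite dimensionality criterion of Proposition \ref{f.d. h.w. g-mod} and the rigidity of finite dimensional highest weight $\gg$-modules in $\mathcal{O}^{\geq 0}$ provided by Proposition \ref{f.d. h.w. module in O is irred.}. First I would verify that $V^1$ is finite dimensional. Since $V^q = V^q(\lambda)$ is irreducible and $\lambda \in \Lambda^+$, Proposition \ref{defining_rel_Vq(la)} gives $f_i^{\lambda(h_i)+1} v = 0$ for every $v$ in the highest weight space of $V^q$; passing to the classical limit yields $\overline{f_i}^{\lambda(h_i)+1} \bar{v} = 0$ on the highest weight space of $V^1$. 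Combined with Theorem \ref{from UAo to Uo}(4), which realizes $V^1$ as a highest weight $U(\gg)$-module, or the sum of two such, with highest weight $\lambda$, Proposition \ref{f.d. h.w. g-mod} guarantees that $V^1$ is finite dimensional.

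Since $\lambda \in \Lambda^+ \cap P_{\geq 0}$, Proposition \ref{f.d. h.w. module in O is irred.} forces each such highest weight summand to be irreducible, hence isomorphic up to $\Pi$ to $V(\lambda)$. Thus $V^1$ must be one of $V(\lambda)$, $\Pi V(\lambda)$, or $V(\lambda) \oplus \Pi V(\lambda)$. To distinguish the three cases I would read off the $\Cliff(\lambda)$-module structure of the classical limit $E^{\Ao}(\lambda)/\Jo E^{\Ao}(\lambda)$ of the highest weight space; this is precisely what was computed in the proof of Theorem \ref{from UAo to Uo}(4) by comparing the dimension count of Corollary \ref{cliff_mod} for $E^q(\lambda)$ with the dimensions of $\Z_2$-graded irreducible $\Cliff(\lambda)$-modules from \cite{ABS}. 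When $|\lambda| = 2k+1$, or $|\lambda| = 2k$ with $\overline{\Delta(\lambda)} = \bar{1}$, the limit $E^{\Ao}(\lambda)/\Jo E^{\Ao}(\lambda)$ is already a single irreducible $\Cliff(\lambda)$-module, so $V^1$ is a single highest weight $U(\gg)$-module and therefore isomorphic to $V(\lambda)$ or $\Pi V(\lambda)$. In the remaining case $|\lambda| = 2k$ with $\overline{\Delta(\lambda)} \neq \bar{1}$, the limit splits as $\mathbf{v}(\lambda) \oplus \Pi \mathbf{v}(\lambda)$, and $V^1$ decomposes accordingly as $V(\lambda) \oplus \Pi V(\lambda)$.

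The character formula is then immediate: Proposition \ref{ch_Vo=ch_V^q} gives $\ch V^q = \ch V^1$, which equals $\ch V(\lambda)$ in cases (1) and (3) and $2 \ch V(\lambda)$ in case (2). The step I expect to require the most care is the last assertion of case (2), namely that the two summands are genuinely $V(\lambda)$ and $\Pi V(\lambda)$ rather than two isomorphic copies of $V(\lambda)$. This will rely on the fact that the $\Z_2$-graded decomposition of $E^{\Ao}(\lambda)/\Jo E^{\Ao}(\lambda)$ supplied by Corollary \ref{cliff_mod} consists of two $\Cliff(\lambda)$-submodules exchanged by the parity functor $\Pi$, which then forces the highest weight $U(\gg)$-modules they generate to be exchanged by $\Pi$ as well.
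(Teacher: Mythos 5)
Your proposal is correct and follows essentially the same route as the paper: Theorem \ref{from UAo to Uo}(4) to realize $V^1$ as one or two highest weight $U(\gg)$-modules of highest weight $\la$, Proposition \ref{f.d. h.w. module in O is irred.} to identify each summand with $V(\la)$ up to $\Pi$, and Proposition \ref{ch_Vo=ch_V^q} for the character identity. The finite-dimensionality argument you spell out (via Proposition \ref{defining_rel_Vq(la)} and Proposition \ref{f.d. h.w. g-mod}) is exactly the content of the corollary preceding the theorem in the paper, and your care about the two summands in case (2) being exchanged by $\Pi$ matches the parity-invariance argument in the proof of Theorem \ref{from UAo to Uo}(4).
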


\begin{proof}
  By Theorem \ref{from UAo to Uo} (4), $V^1$ is a highest weight module or the sum of two highest weight modules over $U(\gg)$ with highest weight $\la$.  By Proposition \ref{f.d. h.w. module in O is irred.}, we have
     $$V^1 \cong \begin{cases}
     V(\la) \  \text{  or  } \  \Pi V(\la)   &\text{if } | \lambda| = 2k  \text{ and } \overline{\Delta(\lambda)} = \bar{1},\\
    V(\la) \op \Pi V(\la)    &\text{if } | \lambda| = 2k \text{ and } \overline{\Delta(\lambda)} \neq \bar{1},\\
    V(\la) \cong \Pi V(\la)   &\text{if } | \lambda| = 2k+1.
   \end{cases} $$

   The second assertion follows from Proposition \ref{ch_Vo=ch_V^q}.
\end{proof}

\begin{remark}The main reason we restrict our attention in Theorem \ref{V1_is_Uqn-module} to the dominant set of weights $ \Lambda^+ \cap P_{\geq 0}$ is the statement of Proposition \ref{f.d. h.w. module in O is irred.}. We still believe that the theorem holds in more general setting and conjecture that it is true for any weight $\lambda \in  \Lambda^+$ for which the generic character formula (\ref{character_formula})  holds.
\end{remark}

\begin{corollary} \label{irred of fd hw over uqqn}
If $V^q$ is a finite dimensional highest weight module over
$\Uqqn$ with highest weight $\lambda \in \Lambda^+ \cap P_{\geq
0}$, then $V^q$ is isomorphic to $V^q(\lambda)$ up to $\Pi$.
\end{corollary}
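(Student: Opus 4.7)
Since $V^q$ is a highest weight module with highest weight $\lambda$, it is generated over $\Uqqn$ by a finite dimensional irreducible $\Uqgeq$-submodule ${\bold v}^q$, which by Remark \ref{Uqgeq0-module and Cliffq-module} and Corollary \ref{cliff_mod} is isomorphic as a $\Cliffqla$-module to $E^q(\lambda)$ or $\Pi E^q(\lambda)$. Replacing $V^q$ by $\Pi V^q$ if necessary, I may assume ${\bold v}^q \cong E^q(\lambda)$, so there is a surjective $\Uqqn$-module homomorphism $W^q(\lambda) \longrightarrow V^q$. By Proposition \ref{verma}(3), every proper submodule of $W^q(\lambda)$ is contained in the unique maximal submodule $N^q(\lambda)$; since $V^q \neq 0$, the kernel of $W^q(\lambda) \to V^q$ is proper and therefore lies in $N^q(\lambda)$. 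This produces a further surjection $\varphi : V^q \longrightarrow V^q(\lambda)$, and it suffices to prove $\dim_{\C(q)} V^q = \dim_{\C(q)} V^q(\lambda)$, for then $\varphi$ must be an isomorphism.

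For the dimension equality I pass to the classical limit. Let $V^1$ denote the classical limit of $V^q$, and write $V^q(\lambda)^1$ for the classical limit of $V^q(\lambda)$. By Proposition \ref{ch_Vo=ch_V^q} it is enough to show $\ch V^1 = \ch V^q(\lambda)^1$. By Theorem \ref{from UAo to Uo}(4), $V^1$ is either a highest weight $U(\gg)$-module or a direct sum of two such, each with highest weight $\lambda$. Finite dimensionality of $V^q$ transfers to $V^1$, and then Proposition \ref{f.d. h.w. module in O is irred.} applied to each highest weight summand forces $V^1$ to be isomorphic to one of $V(\lambda)$, $\Pi V(\lambda)$, or $V(\lambda) \oplus \Pi V(\lambda)$. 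The same trichotomy holds for $V^q(\lambda)^1$ by Theorem \ref{V1_is_Uqn-module}.

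To decide which of the three options occurs for each, I compare $\lambda$-weight space dimensions. Among the three candidates the $\lambda$-weight space has dimension either $\dim {\bold v}(\lambda)$ (cases $V(\lambda)$ and $\Pi V(\lambda)$) or $2\dim {\bold v}(\lambda)$ (case $V(\lambda)\oplus \Pi V(\lambda)$), so the $\lambda$-weight space dimension determines the character $\ch V^1$ uniquely. By Proposition \ref{dim of weight space 2} the $\lambda$-weight space of $V^1$ has dimension $\dim_{\C(q)} {\bold v}^q = \dim_{\C(q)} E^q(\lambda)$, and the analogous statement holds for $V^q(\lambda)^1$. Hence $\ch V^1 = \ch V^q(\lambda)^1$, giving $\dim V^q = \dim V^q(\lambda)$ and consequently $V^q \cong V^q(\lambda)$.

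The only real subtlety is matching the trichotomy for the classical limit of $V^q$ with that for $V^q(\lambda)$, but this comes down to the highest weight space dimension being the same on both sides, which is automatic once one notes that both highest weight spaces are copies of $E^q(\lambda)$; all the genuine structural work has been done in Section \ref{section_clifford} and Section \ref{section_limit}.
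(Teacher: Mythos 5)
Your proof is correct and follows essentially the same route as the paper: pass to the classical limit, use Theorem \ref{from UAo to Uo}(4) together with Proposition \ref{f.d. h.w. module in O is irred.} to pin down $V^1$ up to the three possibilities, and conclude $\ch V^q = \ch V^q(\lambda)$ via Proposition \ref{ch_Vo=ch_V^q}. The only difference is that you make explicit two steps the paper leaves implicit — the surjection $V^q \twoheadrightarrow V^q(\lambda)$ coming from Proposition \ref{verma} and the comparison of $\lambda$-weight-space dimensions that selects the correct case of the trichotomy — both of which are sound.
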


\begin{proof}
Note that $V^1$ is a highest weight module or the sum of two
highest weight modules over $U(\gg)$ with highest weight $\la$ and
it is finite dimensional by Proposition \ref{ch_Vo=ch_V^q}. From
Proposition \ref{f.d. h.w. module in O is irred.}, we know that
$V^1$ is an irreducible module or the direct sum of two
irreducible modules. Thus we get $\ch V^q = \ch V^1 =\ch
V^q(\lambda)$ by Theorem \ref{V1_is_Uqn-module} and hence $V^q
\cong V^q(\lambda)$.
\end{proof}

Define the subalgebras $U_1^{\pm} := \Ao/\Jo \ot_{\Ao}\UAo^{\pm}$ and $U_1^0
:= \Ao/\Jo \ot_{\Ao} \UAo^0$ of $U_1$.
\begin{theorem} \label{U1_is_Uqn-module}
  The classical limit $U_1$ of $\Uqqn$ is isomorphic to the universal enveloping algebra $U(\gg)$.
\end{theorem}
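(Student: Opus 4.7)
The plan is to establish injectivity of the surjective algebra homomorphism $\psi: U(\gg) \ra U_1$ constructed in Theorem \ref{from UAo to Uo}(1). The natural approach is to exploit the parallel triangular decompositions on both sides. Proposition \ref{tri decom of UAo} gives $\UAo \cong \UAo^- \ot \UAo^0 \ot \UAo^+$ as $\Ao$-modules; tensoring with $\Ao/\Jo$ yields a $\C$-linear isomorphism $U_1 \cong U_1^- \ot U_1^0 \ot U_1^+$. This matches the classical triangular decomposition \eqref{eq:tri decomp of U(q(n))}. Because $\psi$ clearly sends $U^\pm$ onto $U_1^\pm$ and $U^0$ onto $U_1^0$, the proof reduces to showing that each restriction $\psi|_{U^\pm}: U^\pm \ra U_1^\pm$ and $\psi|_{U^0}: U^0 \ra U_1^0$ is injective.

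For the Cartan part, I would note that $U^0 = U(\gh)$ is presented by the relations $[h,h']=0$, $[h,k_{\bar l}]=0$, and $k_{\bar i}k_{\bar j} + k_{\bar j}k_{\bar i} = 2\delta_{ij}k_i$. The computations in Theorem \ref{from UAo to Uo}(1) already verify that $\overline{h}$ and $\overline{k_{\bar l}}$ satisfy exactly these relations in $U_1^0$, so $\psi|_{U^0}$ is a well-defined surjection. To upgrade this to an isomorphism I would exhibit an explicit PBW-type $\Ao$-basis of $\UAo^0$ of the form $\{q^h k_{\bar 1}^{a_1}\cdots k_{\bar n}^{a_n} \prod_l (q^{k_l};0)_q^{b_l}\}$ with $a_l \in \{0,1\}$, $b_l \in \Z_{\geq 0}$ (the existence follows from the proof of Lemma \ref{half decom} together with Lemma \ref{lemma for the tri dec of U(A_1)}), and check that its image after $q \mapsto 1$ is linearly independent in $U_1^0$ and matches a PBW basis of $U(\gh)$.

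For the positive part (the negative part being symmetric), I would construct quantum root vectors as iterated $q$-brackets of $e_i, e_{\bar i}$, essentially using the formulas \eqref{induction_of_Lij} up to a scalar $(q-q^{-1})$, and show that suitably ordered monomials in these quantum root vectors form a PBW-type $\Ao$-basis of $\UAo^+$. After specialization at $q=1$ these quantum root vectors become (by the computations verifying the Lie superalgebra relations in Theorem \ref{from UAo to Uo}(1)) the classical root vectors of $\gq(n)$ in $U^+$, and the resulting monomials form an ordered PBW basis of $U^+ = U(\gn_+)$. This simultaneous matching of ordered bases immediately gives the injectivity of $\psi|_{U^+}$.

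The main obstacle is establishing that these ordered monomials in quantum root vectors indeed constitute an $\Ao$-basis of $\UAo^\pm$ that remains linearly independent after specialization. This is a quantum PBW theorem for $\UAo^\pm$, and its verification requires a careful analysis of the quantum Serre-type relations appearing in Theorem \ref{defining relations of Uqqn}, together with the identities \eqref{induction_of_Lij}, to produce straightening rules that rewrite any monomial into ordered form modulo lower-degree terms. Once the quantum PBW basis is in hand, the three injectivities combine via the triangular decompositions above to yield $U(\gg) \cong U_1$, completing the proof.
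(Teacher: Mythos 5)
Your reduction---surjectivity of $\psi$ from Theorem \ref{from UAo to Uo}(1) plus the two triangular decompositions, so that injectivity need only be checked on $U^-$, $U^0$ and $U^+$ separately---is exactly the skeleton of the paper's proof. The divergence, and the gap, is in how injectivity on the two nilpotent halves is established. You propose to build a quantum PBW basis of $\UAo^{\pm}$ out of iterated $q$-brackets of the $e_i, e_{\bar i}$ (essentially the $L_{ij}$ of \eqref{induction_of_Lij}) and to match ordered monomials with a classical PBW basis of $U^{\pm}$ after specialization. But you yourself flag that this quantum PBW theorem is the ``main obstacle'': it is nowhere available in the paper (Theorem \ref{tri decom} only separates the three factors; it gives no straightening rules or ordered monomial basis \emph{inside} $U_q^{\pm}$), and producing such straightening rules for $U_q(\gq(n))$ from the Serre-type relations of Theorem \ref{defining relations of Uqqn} is a substantial piece of work in its own right --- complicated by the fact that the odd root vectors do not anticommute in the naive way (e.g.\ $e_{\bar i}^2$ is a nonzero multiple of $e_i^2$). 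As written, the hardest step of the proof is deferred to an unproven theorem, so the argument is incomplete.

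The paper circumvents this entirely by a representation-theoretic device. For $U^-$: given a nonzero $u=\sum b_{\zeta}f_{\zeta}\in\ker\psi_-$ of bounded length $N$, choose $\la\in\Lambda^+\cap P_{\geq 0}$ with all $\la(h_i)>N$ and with $|\la|$ odd (or $\overline{\Delta(\la)}=\bar 1$), so that by Theorem \ref{V1_is_Uqn-module} the classical limit $V^1$ of $V^q(\la)$ is the \emph{known} module $V(\la)$; the kernel of $(U^-)^{\oplus r}\to V^1$ is then the classical left ideal generated by the $f_i^{\la(h_i)+1}$, which cannot contain $u$ for degree reasons, so $\psi_-(u)\neq 0$. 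For $U^0$ the paper likewise evaluates a putative kernel element on $\Cliff_1(\la)\cong\Cliff(\la)$ for all $\la\in P$ and uses linear independence of the monomials $\overline{t_{\overline{\eta_i}}}$ there. In other words, the paper tests kernel elements against a family of modules whose classical-limit structure has already been computed, rather than against a basis of the algebra. If you want to salvage your approach, you would either have to prove the quantum PBW theorem for $\UAo^{\pm}$ (not done in the paper or in the literature it cites) or replace that step by the paper's evaluation-on-modules argument. Your treatment of $U^0$ is closer to viable, since the monomial basis of $U_q^0$ is imported from \cite{O}, but even there you would still need to verify that the corresponding set is an $\Ao$-basis of $\UAo^0$ rather than merely a $\C(q)$-basis of $U_q^0$.
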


\begin{proof}
By Theorem \ref{from UAo to Uo} (1), there exists a surjective
algebra homomorphism $\psi: U(\gg) \ra U_1$ defined by $e_i \map
\oEi, \ e_{\bar i} \map  \oEib, \  f_i \map \oFi, f_{\bar i} \map
\oFib, \ h \map \overline{h}, k_{\bar l} \ra \overline{\klb}$ for
$i \in I$, $h \in P^{\vee}$ and $l \in J$. From \eqref{eq:tri
decomp of U(q(n))},  $U(\gg) \cong U^- \ot U^0 \ot U^+$.

We first show that $U^0$ is isomorphic to $U_1^0$. Consider the
restriction $\psi_0$ of $\psi$ to $U^0$. Note that
$\Cliff_{\Ao}(\la)$ is a $U^0_{\Ao}$-module. Indeed, as in the
proof of Proposition \ref{VAo_is_free_UAo-module}, we know that
  \beq \bea
  q^h w&=q^{\la(h)}w,\\
  (q^h;0)_q w &=\dfrac{q^{\la(h)}-1}{q-1}w \ \ \ \text{for all } w \in \Cliff_{\Ao}(\lambda).
 \eea \eeq
In particular, the action of $k_{\bar i}$ is just the left
multiplication by $t_{\bar i}$.  Let $g \in \ker\psi_0$. By the
Poincar\'e-Birkhoff-Witt theorem, we can write
$g=\sum_{i=1}^{2n} g_i k_{{\overline \eta_i}}$, where
$k_{{\overline \eta_i}}=k_{\bar{1}}^{a_1} \cdots
k_{\bar{n}}^{a_n}$, $0 \leq a_j \leq 1$ for all $j \in J$ and each
$g_i$ is a polynomial in $k_1, \ldots, k_n$. For each $\la \in P$
 we have
\beq 0=\psi_0(g)\cdot\overline{{\bold
1}}=\sum_{i=1}^{2n} \la(g_i) \overline{t_{{\overline \eta_i }}} \in
\Cliff_1(\la), \eeq
where $\la(g_i)$ denotes the polynomial in
$\la_j$ corresponding to $g_i$. Since $\{\overline{t_{{\overline \eta_i}}
} \}$ is a linearly independent subset of $\Cliff_1(\la) \cong
\Cliff(\la)$, we have $\la(g_i)=0$ for all $i=1,\ldots,2n$. Since
we may take any integer value for $\la_j$, $g_i$ must be
zero for all $i=1,\ldots,2n$ and hence $g$ is identically zero.
Thus $\psi_0$ is injective.

Next we show that the restriction of $\psi_-$ of $\psi$ to $U^-$
is an isomorphism of $U^-$ onto $U_1^-$. Suppose $\ker \psi_- \neq
0$ and $u=\sum b_{\zeta}f_{\zeta} \in \ker \psi_-$, where
$b_{\zeta} \in \C $ and $f_{\zeta}$ are monomials in $f_i$ and
$f_{\bar i}$'s. Let $N$ be the maximal length of the monomials
$f_{\zeta}$ in the expression of $u$, and choose $\la \in
\Lambda^+ \cap P_{\geq 0}$ satisfying $\la(h_i) >N$ and $|\la|=2k$
and $\Delta(\la)=\bar 1$ or $|\la|=2k+1$ for all $i \in I$. By
Theorem \ref{V1_is_Uqn-module}, the classical limit $V^1$ of
$V^q(\la)$ is isomorphic to the irreducible $U(\gg)$-module
$V(\la)$ when $|\la|=2k$ and $\Delta(\la)=\bar 1$, or
$|\la|=2k+1$. Set $r=2^{[\frac{|\la|+1}{2}]}$. Consider the map
$\phi: \big( U^- \big)^{\oplus r} \ra V^1$, given by
$(x_1,\ldots,x_n) \map \sum_{1=1}^r \psi(x_i) \cdot v_i$ for a
basis $\{v_i ~|~ i=1,\ldots,r\}$ of $V_{\la}^1$. Then by
Proposition \ref{f.d. h.w. module in O is irred.} and Proposition
\ref{f.d. h.w. g-mod}, $\ker \phi$ is the left ideal of $\big(U^-
\big)^{\op r}$ generated by $(f_i^{\la(h_i)+1},0,\ldots,0)$,
$\ldots$, $(0,\ldots,0,f_i^{\la(h_i)+1})$ for $i \in I$. In
particular, $(u,0,\ldots,0)=(\sum b_{\zeta}f_{\zeta},0,\ldots,0)
\not \in \ker \phi$. That is $\psi_-(u) v_1 \neq 0$, which is a
contradiction. So $\ker \psi_-=0$ and $U^-$ is isomorphic to
$U_1^-$.

Similarly, we can show that $U^+ \cong U_1^+$. By the triangular
decomposition we have \beq U(\gg) \cong U^- \ot U^0 \ot U^+ \cong
U_1^- \ot U_1^0 \ot U_1^+ \cong U_1. \eeq
It can be checked easily
that this isomorphism is an algebra isomorphism.
\end{proof}

\begin{theorem}
Let $\la \in P$. If $V^q$ is the Weyl module $W^q(\la)$ over
$\Uqqn$ with highest weight $\la$, then its classical limit $V^1$
is isomorphic to
\begin{enumerate} [{\quad \rm(1)}]
 \item $W(\la)$ or $\Pi W(\la)$ if $| \lambda| = 2k$  and $\overline{\Delta(\lambda)} = \bar{1}$,
 \item $W(\la) \op \Pi W(\la)$ if $| \lambda| = 2k$ and
 $\overline{\Delta(\lambda)} \neq \bar{1}$ {\rm(}in particular, if $\lambda_1 >...> \lambda_{2k} >0${\rm)},
 \item $W(\la) \cong \Pi W(\la)$ if $|\lambda| = 2k+1$.
\end{enumerate}
\end{theorem}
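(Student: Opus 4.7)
The plan is to identify the classical limit $V^1$ of $W^q(\lambda)$ with a module induced from a $U(\gb_+)$-action on $\bar E := E^{\Ao}(\la)/\Jo E^{\Ao}(\la)$, and then read off the three cases from the already-established decomposition of $\bar E$ as a $\Cliff(\la)$-module.

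First I would establish the correct $\Ao$-form of $W^q(\lambda)$. Since $W^q(\la) = U_q(\gg) \otimes_{U_q^{\geq 0}} E^q(\la)$ is a free $U_q^-$-module of rank $\dim E^q(\la)$ by Proposition \ref{verma}(1), I would combine this with the triangular decomposition of $U_{\Ao}$ from Proposition \ref{tri decom of UAo} and the $\Ao$-freeness of $E^{\Ao}(\la)$ (which follows by the same argument as Proposition \ref{dim of weight space}) to show that $V_{\Ao} = U_{\Ao}^- \cdot E^{\Ao}(\la)$ is a free $U_{\Ao}^-$-module with a basis obtained from an $\Ao$-basis of $E^{\Ao}(\la)$. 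Reducing modulo $\Jo$ then gives a $\C$-linear isomorphism $V^1 \cong U_1^- \otimes_{\C} \bar E$, so $V^1$ is generated over $U_1$ by $\bar E$ and is free over $U_1^-$ on $\bar E$.

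Next I would use the identification $U_1 \cong U(\gg)$ from Theorem \ref{U1_is_Uqn-module} together with Theorem \ref{from UAo to Uo}(2)--(3) to equip $\bar E$ with the structure of a $U(\gb_+)$-module: $U^+$ acts trivially (this property is preserved from the quantum setting), $\gh_{\bar 0}$ acts by $\la$, and $\gh_{\bar 1}$ acts through the induced $\Cliff(\la) \cong \Cliff_1(\la)$-structure. The universal property then yields a surjective $U(\gg)$-homomorphism $\Phi \colon U(\gg) \otimes_{U(\gb_+)} \bar E \to V^1$, and comparing the $U^-$-module structures (both sides are free over $U^-$ with the same basis $\bar E$, thanks to the classical PBW decomposition \eqref{eq:tri decomp of U(q(n))} on one side and the freeness established above on the other) shows that $\Phi$ is an isomorphism.

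Finally, I would apply Theorem \ref{from UAo to Uo}(4) to decompose $\bar E$. In case $|\la| = 2k$ with $\overline{\Delta(\la)} = \bar 1$ or $|\la| = 2k+1$, $\bar E$ is an irreducible $\Cliff(\la)$-module, so $\bar E \cong \bold v(\la)$ (possibly up to $\Pi$), whence $V^1 \cong W(\la)$ (or $\Pi W(\la)$); in case $|\la| = 2k+1$ the isomorphism $\bold v(\la) \cong \Pi\bold v(\la)$ gives the stated identification. In the remaining case $|\la| = 2k$ with $\overline{\Delta(\la)} \neq \bar 1$, Theorem \ref{from UAo to Uo}(4) gives $\bar E \cong \bold v(\la) \oplus \Pi \bold v(\la)$, and since induction distributes over direct sums we obtain $V^1 \cong W(\la) \oplus \Pi W(\la)$. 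The main obstacle I anticipate is the first step: ensuring that the $\Ao$-form $V_{\Ao}$ of the Weyl module (as opposed to the irreducible module treated in Proposition \ref{VAo_is_free_UAo-module}) is genuinely free of the correct rank over $U_{\Ao}^-$, so that no collapsing occurs when reducing modulo $\Jo$; this rests on a careful compatibility between the triangular decomposition of $U_{\Ao}$ and the $\Ao$-freeness of $E^{\Ao}(\la)$, parallel to but strictly generalizing the argument in Proposition \ref{dim of weight space}.
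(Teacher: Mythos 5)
Your proposal is correct and follows essentially the same route as the paper's proof: establish that the $\Ao$-form of $W^q(\la)$ is free over $U_{\Ao}^-$ on $E^{\Ao}(\la)$, pass to the classical limit using $U^-\cong U_1^-$, and read off the three cases from the decomposition of $E^{\Ao}(\la)/\Jo E^{\Ao}(\la)$ as a $\Cliff(\la)$-module given in Theorem \ref{from UAo to Uo}(4). Your explicit construction of the surjection $\Phi$ via the universal property is just a slightly more formal packaging of the paper's observation that it suffices to match the $U^-$-rank and the highest weight space.
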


\begin{proof}
Let ${\bold v}(\la)$ be a finite dimensional irreducible
$\gb_+$-module of weight $\la$ which generates $W(\la)$. Since
$U^- \cong U_1^-$ and $E^{\Ao}(\la)/\Jo E^{\Ao}(\la)$ is
isomorphic to ${\bold v}(\la)$ or ${\bold v}(\la) \op \Pi {\bold
v}(\la)$  as a $\Cliff(\la)$-module, it suffices to show that
$V^1$ is a free $U_1^-$-module whose rank is $\dim_{\C}
{\bold v}(\la)$ or $2\dim_{\C} {\bold v}(\la)$.

By Proposition \ref{verma} we know that $W^q(\la)$ is a free $U_q^-$-module generated
by $E^q(\la)$. Since $\VAo$ is a subspace of $V^q$, taking
Proposition \ref{dim of weight space} into account, $\VAo$ is a
free $\UAo^-$-module generated by $E^{\Ao}(\la)$. Taking the
classical limit, we see that $V^1= U_1^- \cdot
\Big(E^{\Ao}(\la)/\Jo E^{\Ao}(\la) \Big)$ and
  \beq
  \dim_{\C} E^{\Ao}(\la)/\Jo E^{\Ao}(\la)=\dim_{\C(q)} E^q(\la)=\dim_{\C} {\bold v}(\la) \ \text{or} \ 2\dim_{\C} {\bold v}(\la).
  \eeq

By a similar argument as in \cite[Proposition 3.4.10]{HK2002}, we
can show that $V^1$ is a free $U_1^-$-module. When $| \lambda| =
2k$ and  $\overline{\Delta(\lambda)} \neq \bar{1}$, $E^q(\la)$ is
parity invariant. Hence we have
\begin{equation*}
V^1 \cong \begin{cases}
     W(\la) \  \text{  or  } \  \Pi W(\la)   &\text{if } | \lambda| = 2k  \text{ and } \overline{\Delta(\lambda)} = \bar{1},\\
    W(\la) \op \Pi W(\la)    &\text{if } | \lambda| = 2k \text{ and } \overline{\Delta(\lambda)} \neq \bar{1},\\
    W(\la) \cong \Pi W(\la)   &\text{if } | \lambda| = 2k+1.
   \end{cases}
\end{equation*}
\end{proof}

\vskip 5mm

%%%%%%%%%%%%%%%%%%%%%%%%%%%%%%%%%%%%%%%%%%%%%%%%%%%%%%%%%%%%%%%%%%%%%%%%%%%%%%%%%%%%%%%%%%%%%%%%%%%%%%%%%%%%%%%%%%%%%%

\section{Complete reducibility of the category ${\mathcal O}_{q}^{\geq 0}$} \label{section_semisimple}

In this section, we prove the complete reducibility theorem for
$U_q(\gg)$-modules in the category ${\mathcal O}_{q}^{\geq 0}$.

\begin{definition} \label{def_o_q}
The {\it category ${\mathcal O}_{q}^{\geq 0}$} consists of finite
dimensional $U_q(\gg)$-modules $M$ with a weight space
decomposition $M=\bigoplus_{\mu \in P} M_{\mu }$
 satisfying $(i)$ $\wt(M)
\subset P_{\geq 0} $,
$(ii)$ $k_{\bar i}|_{M_{\mu}} = 0 $ for $\mu \in P_{\geq 0}$ and $i \in \{1, \ldots, n\}$
such that $\langle k_i, \mu \rangle =0$.
\end{definition}

\begin{remark} The complete reducibility theorem for
${\mathcal O}_{q}^{\geq 0}$, which we establish at the end of this
section,  implies that  ${\mathcal O}_{q}^{\geq 0}$ is isomorphic to
the category ${\mathcal T}_q$ of tensor modules;  i.e., submodules
of a tensor power of the natural representation $\C(q)^{n | n}$.
Indeed, using the description of ${\mathcal T}_q$ provided by
Olshanski and Sergeev we first check that every simple object of
${\mathcal O}_{q}^{\geq 0}$ is a tensor module. Then, by the
complete reducibility result for ${\mathcal T}_q$, obtained again
by Sergeev and Olshanski, we conclude that the two categories are
isomorphic.
\end{remark}

\begin{proposition}
 (i)  The category ${\mathcal O}_{q}^{\geq 0}$ is closed under taking tensor products.

(ii) For each $\lambda \in \qdiw \cap P_{\geq 0}$, $V^q(\lambda)$ is an irreducible  $U_q(\gg)$-module  in the
category ${\mathcal O}_{q}^{\geq 0}$. Conversely,
every finite dimensional irreducible $U_q(\gg)$-module in the
category ${\mathcal O}_{q}^{\geq 0}$ has the form $V^q(\lambda)$
for some $\lambda \in \qdiw \cap P_{\geq 0}$.
 \end{proposition}
 \begin{proof}
For $(i)$ we use the comultiplication formula for $k_{\bar i}$. For $(ii)$ we use Theorem \ref{V1_is_Uqn-module} and Proposition \ref{prop_o}.
% One can easily prove the following proposition (see, for example, \cite[Theorem 7.2.3]{HK2002}).
\end{proof}

Let $S$ be the {\it antipode} on $U_q(\gg)$ defined in
\cite[Section 4]{O}. We have $S(q^h)=q^{-h}$ for all $h \in
P^{\vee}$. Because $S$ is an anti-automorphism on $U_q(\gg)$, one
can define two $U_q(\gg)$-module structures on the dual vector
space of a $U_q(\gg)$-module $V \in {\mathcal O}_q^{\geq 0}$ by
\begin{align*}
\langle x \cdot \phi , v \rangle &:= \langle \phi, S(x) \cdot v \rangle \ \rm{and} \\
\langle x \cdot \phi , v \rangle &:= \langle \phi, S^{-1}(x) \cdot
v \rangle
\end{align*}
for each $x \in U_q(\gg)$ and linear functional $\phi$ on $V$. We
denote these modules by $V^*$ and $V'$, respectively. As vector spaces both modules are
just $\bigoplus_{\mu \in P} V^*_{\mu}$, where
$V^*_{\mu} = \Hom_{\C (q)} (V_\mu , \C (q))$. The  following lemma
is an immediate consequence of the definitions.
\begin{lemma} \label{weight spaces of dual modules} Suppose that $V$ is a $U_q(\gg)$-module in the category ${\mathcal O}_q^{\geq 0}$.
\begin{enumerate}
\item[{\rm(1)}] There exist canonical $U_q(\gg)$-module isomorphisms $(V^*)' \cong V \cong (V')^* $.
\item[{\rm(2)}] The space $V^*_{\mu}$ is a weight space of weight $-\mu$.
\end{enumerate}
\end{lemma}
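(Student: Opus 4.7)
The plan is to prove both parts by direct computation, using the finite-dimensionality of $V$ (which holds since $V \in {\mathcal O}_q^{\geq 0}$) together with the defining properties of $S$.

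For part (1), I would first invoke the canonical vector space isomorphism $\iota : V \to V^{**}$ sending $v$ to the evaluation $\mathrm{ev}_v$ (where $\mathrm{ev}_v(\phi) = \phi(v)$), which is bijective because $V$ is finite dimensional; the same map $\iota$ will be a candidate for both isomorphisms $V \to (V^*)'$ and $V \to (V')^*$. The task is to verify that $\iota$ is a $U_q(\gg)$-module homomorphism. For $(V^*)'$, the structure applies $S$ first (passing to $V^*$) and then $S^{-1}$ (passing to $(V^*)'$). Thus for $x \in U_q(\gg)$, $v \in V$, and $\phi \in V^*$,
\begin{equation*}
\langle x \cdot \iota(v), \phi \rangle = \langle \iota(v), S^{-1}(x) \cdot \phi \rangle = \langle S^{-1}(x) \cdot \phi , v \rangle = \langle \phi, S(S^{-1}(x)) v \rangle = \langle \phi, x v \rangle = \langle \iota(xv), \phi \rangle,
\end{equation*}
so $x \cdot \iota(v) = \iota(xv)$. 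The identical computation with $S$ and $S^{-1}$ interchanged handles the isomorphism $V \cong (V')^*$.

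For part (2), given $\phi \in V^*_\mu = \Hom_{\C(q)}(V_\mu, \C(q))$, I would extend $\phi$ by zero on the other weight spaces (this is well defined because of the weight space decomposition), and then compute the action of $q^h$ using the formula for the $V^*$-structure and the identity $S(q^h) = q^{-h}$. For any $v \in V_\nu$,
\begin{equation*}
\langle q^h \cdot \phi, v \rangle = \langle \phi, S(q^h) v \rangle = \langle \phi, q^{-h} v \rangle = q^{-\nu(h)}\langle \phi, v \rangle,
\end{equation*}
which equals $q^{-\mu(h)} \langle \phi, v \rangle$ for all $v$ (both sides being zero when $\nu \neq \mu$). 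Hence $q^h \cdot \phi = q^{-\mu(h)} \phi$, and $V^*_\mu$ sits in weight $-\mu$ of $V^*$.

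No step here looks like a serious obstacle: part (2) is a one-line computation from $S(q^h) = q^{-h}$, and part (1) reduces to the standard double-duality identification once one checks that $S \circ S^{-1} = \mathrm{id}$ kills the antipode twist, something that works uniformly for any Hopf (super)algebra with invertible antipode. The only minor point to state carefully is that finite-dimensionality ensures both $V^{**} \cong V$ as vector spaces and that $V^*$ itself lies in ${\mathcal O}_q^{\geq 0}$ (via part (2)), so iterating the dual is legitimate.
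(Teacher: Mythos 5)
Your proof is correct and is precisely the routine verification the paper leaves implicit (it states only that the lemma is ``an immediate consequence of the definitions''): the double-dual evaluation map intertwines the actions because the $S$- and $S^{-1}$-twists cancel, and part (2) follows from $S(q^h)=q^{-h}$. One small slip in your closing remark: $V^*$ does \emph{not} lie in ${\mathcal O}_q^{\geq 0}$ (by part (2) its weights are the negatives of those of $V$); what legitimizes forming $(V^*)'$ is simply that $V^*$ is a finite-dimensional weight module, which is all the dual construction requires.
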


Since $q^h S(e_i)q^{-h}=q^{\al_i(h)}S(e_i)$, we have $S(e_i)
V_{\mu} \subset V_{\mu+\al_i}$, which implies $e_i V^*_{\mu}
\subset V^*_{\mu-\al_{i}}$. By Lemma \ref{weight spaces of dual
modules}, we get $e_i (V^*)_{-\mu} \subset (V^*)_{-\mu+\al_{i}}$.
Similarly, we also have $e_{\bar i} (V^*)_{-\mu} \subset
(V^*)_{-\mu+\al_{i}}$, $f_i (V^*)_{-\mu} \subset
(V^*)_{-\mu-\al_{i}}$, $f_{\bar i} (V^*)_{-\mu} \subset
(V^*)_{-\mu-\al_{i}}$ for all $i \in I$ and $k_{{\bar i}}
(V^*)_{-\mu} \subset (V^*)_{-\mu}$ for all $i \in J$. A weight
module $M$ is called a {\it lowest weight module} with lowest
weight $\lambda \in P$ if it is generated over $U_q (\gg)$ by an
irreducible finite dimensional $U_q^{\leq 0}$-module. By a similar
argument as in Proposition \ref{\Uq^0-module}, one can show that
$\big( V^q(\la)_{\la} \big)^*$ is an irreducible $U_q^{\leq
0}$-module so that $V^q(\lambda)^*$ and $V^q(\lambda)'$ are lowest
weight modules of lowest weight $-\lambda$.

Suppose that $V$ is a $U_q(\gg)$-module in the category ${\mathcal
O}_{q}^{\geq 0}$. Because $V$ is finite dimensional, we may choose
a {\it maximal} weight $\lambda \in \wt(V)$ with the property that
$\lambda + \alpha_i$ is not a weight of $V$ for any $i \in I$.
Then the weight space $V_{\lambda}$ is a completely reducible $U_q^{0}$-module (see the remark after Definition \ref{def_o}).
Fix an irreducible summand $\textbf{v}$ of
$V_{\lambda}$ and set $L=U_q(\gg)\bold{v}$. Then $L$ is a highest
weight $U_q(\gg)$-module with highest weight $\lambda$. By the
assumption, $\lambda \in \Lambda^+ \cap P_{\geq 0}$ and  from
Corollary \ref{irred of fd hw over uqqn} we know $L \cong
V^q(\lambda)$ up to $\Pi$.

Now consider $\bar{\bold v}=\Hom_{\C(q)}(\bold v, \C(q))$
Then it is a $U_q^{0}$-submodule of $(V^*)_\la$.
Set
$$\bar L = U_q(\gg)\bar{\bold v} \subset V^*.$$

It is easy to show that $\bar {\bold v}$ is an irreducible
$U_q^{\leq 0}(\gg)$-module and $\bar L$ is a lowest weight module
with lowest weight $-\lambda$. Translating Corollary \ref{irred of
fd hw over uqqn} to the case of lowest weight modules, we get
the following lemma.

\begin{lemma}
The $U_q(\gg)$-module $\bar L$ is isomorphic to the irreducible
lowest weight module $V^q(\lambda)^*$ with lowest weight
$-\lambda$ and lowest weight space ${\bar {\bold v}}$.
\end{lemma}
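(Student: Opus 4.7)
The plan is to invoke the analog of Corollary \ref{irred of fd hw over uqqn} for lowest weight modules. The antipode $S$ is an anti-automorphism of $U_q(\gg)$ which preserves $U_q^0$ and swaps, on duals, the roles of $U_q^+$ and $U_q^-$, so the highest weight theory of Sections \ref{section_highest} and \ref{section_limit}---namely, the triangular decomposition, the description of irreducible finite dimensional $U_q^{\geq 0}$-modules via the quantum Clifford superalgebra $\Cliff_q(\lambda)$, the uniqueness of the irreducible quotient of a Weyl module, the classical limit theorem, and finally Corollary \ref{irred of fd hw over uqqn}---carries over verbatim to the lowest weight setting. In particular, for every $\lambda \in \Lambda^+ \cap P_{\geq 0}$ there exists, up to $\Pi$, a unique finite dimensional irreducible lowest weight $U_q(\gg)$-module with lowest weight $-\lambda$, which can be realized concretely as $V^q(\lambda)^*$.

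First I would verify that $\bar L$ fits into this framework. Since $V \in \mathcal{O}_q^{\geq 0}$ is finite dimensional, so is $V^*$, and hence so is $\bar L \subset V^*$; and by the preceding paragraph of the excerpt, $\bar L$ is a lowest weight module generated by the irreducible finite dimensional $U_q^{\leq 0}$-module $\bar{\bold v}$ of weight $-\lambda$. Next, since $V^q(\lambda)$ is the irreducible highest weight module with highest weight $\lambda \in \Lambda^+ \cap P_{\geq 0}$, the twisted dual $V^q(\lambda)^*$ is a finite dimensional irreducible lowest weight module with lowest weight $-\lambda$, whose lowest weight space is canonically $\Hom_{\C(q)}(V^q(\lambda)_\lambda, \C(q))$.

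Applying the lowest weight analog of Corollary \ref{irred of fd hw over uqqn} then yields $\bar L \cong V^q(\lambda)^*$ up to $\Pi$, and fixing the parity of $\bar{\bold v}$ to match that of the highest weight space of $V^q(\lambda)$ realizes this isomorphism so that $\bar{\bold v}$ is identified with the lowest weight space of $V^q(\lambda)^*$. The main obstacle---more cosmetic than substantive---is the careful transport of the highest weight classification through $S$: one must check that swapping $e_i, e_{\bar i}$ with $f_i, f_{\bar i}$ and using the explicit formulas for $S$ on $U_q^0$ carries the proofs of Lemma \ref{half decom}, Proposition \ref{verma}, Theorem \ref{V1_is_Uqn-module}, and Corollary \ref{irred of fd hw over uqqn} to the lowest weight setting without substantive change. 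This is formal but lengthy, which is presumably why the excerpt refers to it merely as ``translating.''
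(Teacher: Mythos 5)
Your proposal is correct and takes essentially the same route as the paper: the paper's entire argument is the sentence preceding the lemma, namely that the result follows by translating Corollary \ref{irred of fd hw over uqqn} to the lowest weight setting, which is exactly what you do (with somewhat more detail about which intermediate results must be transported through the antipode and how the ambiguity up to $\Pi$ is resolved).
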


Now we can prove the completely reducibility theorem for
$U_q(\gg)$-modules in the category ${\mathcal O}_{q}^{\geq 0}$.

\begin{theorem}
Every $U_q(\gg)$-module $V$ in the category ${\mathcal
O}_{q}^{\geq 0}$ is completely reducible.
\end{theorem}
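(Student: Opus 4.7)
The plan is to proceed by induction on $\dim_{\C(q)} V$, the base case $V = 0$ being trivial. For the inductive step, the strategy is to exhibit, inside any nonzero $V \in {\mathcal O}_q^{\geq 0}$, an irreducible submodule that admits a $U_q(\gg)$-stable complement; the induction hypothesis applied to the kernel of the corresponding projection then finishes the argument. The irreducible submodule is the one already produced in the discussion preceding the theorem: one picks a maximal weight $\la \in \wt(V)$, an irreducible $\Uqgeq$-submodule ${\bold v} \subset V_{\la}$, and forms $L = U_q(\gg){\bold v}$; since $\la \in \Lambda^+ \cap P_{\geq 0}$, Corollary \ref{irred of fd hw over uqqn} identifies $L$ with $V^q(\la)$ up to $\Pi$.

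To build the complement I would use the duality set-up already developed in the text. Starting from the inclusion $\bar L \hookrightarrow V^*$, where $\bar L \cong V^q(\la)^*$ up to $\Pi$ by the preceding lemma, and applying the contravariant functor ${}'$, I obtain a $U_q(\gg)$-module homomorphism
\[
\pi \colon V \,\cong\, (V^*)' \,\twoheadrightarrow\, (\bar L)' \,\cong\, \bigl(V^q(\la)^*\bigr)' \,\cong\, V^q(\la)
\]
(up to $\Pi$), which concretely sends $v \in V$ to the linear functional $\phi \mapsto \phi(v)$ on $\bar L$.

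The next step is to check that $\pi|_L$ is nonzero: for any $0 \neq v \in {\bold v}$ one can find $\phi \in \bar{\bold v}$ with $\phi(v) \neq 0$, which is immediate from the definition of $\bar{\bold v}$ as the full linear dual of ${\bold v}$. Since $L$ is irreducible, $\pi|_L$ is then injective, and equality of dimensions forces it to be an isomorphism. Combined with the surjectivity of $\pi$, this yields the desired decomposition $V = L \oplus \ker \pi$. The module $\ker \pi$ lies in ${\mathcal O}_q^{\geq 0}$ and has strictly smaller dimension than $V$, so by the induction hypothesis $\ker \pi$ is completely reducible; hence so is $V$.

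I expect the main delicate point to be the verification that $\pi$ is a genuine $U_q(\gg)$-module homomorphism with the stated source and target. This requires tracking the antipode $S$ through the canonical identification $V \cong (V^*)'$ and the contravariant restriction from $(V^*)'$ to $(\bar L)'$; the formulas for $S$ on the Olshanski generators (see \cite{O}, Section 4) must intertwine correctly with the comultiplication \eqref{comult} in order to guarantee $U_q(\gg)$-linearity. Once this formal but lengthy verification is carried out, together with the analogous identification $(V^q(\la)^*)' \cong V^q(\la)$ and the bookkeeping of the parity-change functor $\Pi$ on both sides, the remainder of the argument runs as sketched above.
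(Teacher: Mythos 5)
Your proposal is correct and follows essentially the same route as the paper: choose a maximal weight $\la$, form $L=U_q(\gg){\bold v}$, use the dual pair $\bar L \hookrightarrow V^*$ and the map $V\cong (V^*)'\to(\bar L)'$, check its restriction to $L$ is nonzero hence an isomorphism (the paper invokes Schur's lemma here), and conclude $V\cong L\oplus V/L$ before inducting on dimension. The ``delicate point'' you flag — that the map is a genuine $U_q(\gg)$-homomorphism — is exactly what the paper's Lemma \ref{weight spaces of dual modules} and the functoriality of the $S^{-1}$-dual are set up to provide, so no new idea is needed there.
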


\begin{proof}
Take a maximal weight $\lambda$ and consider a submodule of $V$,
say $L$, generated by an irreducible $U_q^{\geq 0}$-submodule of
$V_{\lambda}$. We want to show $V \cong L \oplus V/L$. Taking dual
with respect to $S^{-1}$ of the inclusion $\bar L \to V^*$, we
obtain a $U_q(\gg)$-module homomorphism $ V \cong (V^*)' \to (\bar
L)'$. Thus we have a map:
$$\psi : L \hookrightarrow V \to (\bar L)'.$$
It is easy to check that $\psi$ is a nontrivial homomorphism.
Since both $L$ and $(\bar L)'$ are irreducible, $\psi$ is an
isomorphism by Schur's lemma and we see that the following short
exact sequence splits:
$$0 \to L \to V \to V/L \to 0.$$
Since $V/L \in {\mathcal O}_q^{\geq 0}$,  using induction on the
dimension of $V$, we complete the proof.
\end{proof}

\begin{corollary}
The tensor product of a finite number of $U_q(\gg)$-modules in the
category ${\mathcal O}_q^{\geq 0}$ is completely reducible.
\end{corollary}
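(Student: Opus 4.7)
The plan is to reduce the statement to the complete reducibility theorem just established by showing that the category ${\mathcal O}_q^{\geq 0}$ is closed under tensor products. By induction on the number of factors it will suffice to treat the case of two modules $V,W \in {\mathcal O}_q^{\geq 0}$.

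First I would verify that $V \otimes W$ is a weight module with the expected weight space decomposition. Since $\Delta(q^h) = q^h \otimes q^h$ for every $h \in P^{\vee}$, one has
\begin{equation*}
(V \otimes W)_{\nu} = \bigoplus_{\lambda + \mu = \nu} V_{\lambda} \otimes W_{\mu},
\end{equation*}
so that $V \otimes W$ admits a weight space decomposition and $\wt(V \otimes W) \subseteq \wt(V) + \wt(W)$. Finite dimensionality is obvious since both $V$ and $W$ are finite dimensional.

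Next I would check that $\wt(V \otimes W) \subseteq P_{\geq 0}$. If $\lambda = \sum_i \lambda_i \epsilon_i \in P_{\geq 0}$ and $\mu = \sum_i \mu_i \epsilon_i \in P_{\geq 0}$, then every coordinate of $\lambda + \mu$ is a sum of two nonnegative integers, hence nonnegative; so $\lambda + \mu \in P_{\geq 0}$. Combined with the previous paragraph this shows $V \otimes W \in {\mathcal O}_q^{\geq 0}$.

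With closure in hand, the complete reducibility theorem proved above applies directly to $V \otimes W$, which completes the inductive step and hence the proof for any finite number of tensor factors. There is no real obstacle here; the argument is entirely formal once one has (i) the explicit comultiplication formula for $q^h$ and (ii) the closure of $P_{\geq 0}$ under addition.
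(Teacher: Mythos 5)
Your proposal is correct and is exactly the intended argument: the paper states this as an immediate corollary of the complete reducibility theorem, relying on the fact that ${\mathcal O}_q^{\geq 0}$ is closed under tensor products, which follows from $\Delta(q^h)=q^h\otimes q^h$ and the closure of $P_{\geq 0}$ under addition, just as you wrote. Nothing is missing.
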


\begin{remark}
The same argument can be applied to prove the completely reducibility of
${\mathcal O}^{\geq 0}.$ In that case, the antipode is given by
$S(x)=-x$ for all $x \in \gg$ (see \cite[Section 4]{N}) and
Proposition \ref{f.d. h.w. module in O is irred.} plays the same
role as Proposition \ref{irred of fd hw over uqqn}.
\end{remark}

\begin{acknowledgement}
  We would like to thank Ivan Penkov and Vera Serganova for the stimulating discussions. D.G. gratefully acknowledges the hospitality and excellent working conditions at the Seoul National University where most of this work was completed.
\end{acknowledgement}

%%%%%%%%%%%%%%%%%%%%%%%%%%%%%%%%%%%%%%%%%%%%%%%%%%%%%%%%%%%%%%%%%%%%%%%%%%%%%%%%%%%%%%%%%%%%%%%%%%%%%%%%%%%%%%%%%%%%%


\begin{thebibliography}{99}
\bibitem[ABS]{ABS} M.F. Atiyah, R. Bott, A. Shapiro, Clifford modules, {\it Topology} {\bf 3} (1964), 3--38.
\bibitem[B]{B} J. Brundan, Kazhdan-Lusztig polynomials and character formulae for the Lie superalgebra $\gq(n)$, {\it Adv. Math.} {\bf 182} (2004), 28--77.

\bibitem[BKM]{BKM} G. Benkart, S.-J. Kang, D. Melville, Quantized enveloping algebras for Borcherds superalgebras,
{\it Trans. Amer Math. Soc.} {\bf 350} (1998), 3297--3319.

\bibitem[Dr]{Dr} V. Drinfel'd,  Quantum groups, {\it Proceedings of the International Congress of Mathematicians, Vol. 1}  (Berkeley, Calif., 1986), 798--820, Amer. Math. Soc., Providence, RI, 1987.

\bibitem[G]{G} M. Gorelik, Shapovalov determinants of $Q$-type Lie superalgebras, {\it
Int. Math. Res. Pap.}, Article ID {\bf 96895} (2006), 1--71.

\bibitem[Har]{Har} J. Harris, Algebraic Geometry,
A first course. Corrected reprint of the 1992 original. {\it
Graduate Texts in Mathematics} {\bf 133}
 Springer-Verlag, New York, 1995.

\bibitem[HK]{HK2002}
J. Hong, S.-J. Kang, Introduction to Quantum Groups and Crystal
Bases, {\it Graduate Studies in Mathematics} {\bf 42}, American
Mathematical Society, 2002.

\bibitem[IR]{IR} K. Ireland, M. Rosen, A Classical Introduction to Modern Number Theory,
2nd ed., {\it Graduate Texts in Mathematics} {\bf 84},
Springer-Verlag, New York, 1990.


\bibitem[K]{K} V. Kac, Lie superalgebras, {\it Adv. Math.} {\bf 26} (1977), 8--96.


\bibitem[Lam]{Lam} T. Y. Lam, Introduction to Quadratic Forms over Fields,
{\it Graduate Studies in Mathematics} {\bf 67}, American
Mathematical Society, Providence, RI, 2005.

\bibitem[Lang]{Lang} S. Lang, Algebra, Revised third edition,
{\it Graduate Texts in Mathematics} {\bf 211}, Springer-Verlag,
New York, 2002.


\bibitem[LS]{LS} D. Leites, V. Serganova, Defining relations for classical Lie superalgebras I. Superalgebras
with Cartan matrix or Dynkin-type diagram, {\it Proc. Topological
and Geometrical Methods in Field Theory (Eds. J. Mickelson, et
al)}, World Sci., Singapore, 1992, 194--201.
\bibitem[N]{N} M. Nazarov, Capelli identities for Lie superalgebras, {\it Ann. Sci. Ecole Norm. Sup.} (4) 30, {\bf  6} (1997),  847--872.

\bibitem[O]{O} G. Olshanski, Quantized universal enveloping
superalgebra of type $Q$ and a super-extension of the Hecke
alegbra, {\it Lett. Math. Phys.} {\bf 24} (1992), 93--102.

\bibitem[P]{P} I. Penkov, Characters of typical irreducible finite-dimensional $\gq(n)$-modules, {\it Funct. Anal.
Appl.} {\bf 20} (1986), 30--37.

\bibitem[PS1]{PS1} I. Penkov, V. Serganova, Generic irreducible representations of finite-dimensional Lie superalgebras, {\it International Journal of Mathematics} {\bf 5} (1994), 389--419.

\bibitem[PS2]{PS2} I. Penkov, V.  Serganova, Characters of irreducible $G$-modules and cohomology of $G/P$ for the Lie supergroup $G=Q(N)$,
{\it J. Math. Sci.} (New York) {\bf 84} (1997), 1382--1412.

\bibitem[PS3]{PS3} I. Penkov, V. Serganova,  Characters of finite-dimensional irreducible $\gq (n)$-modules,
{\it Lett. Math. Phys.} {\bf 40} (1997), 147--158.

\bibitem[RTF]{RTF} N. Reshetikhin, L. Takhtadzhyan, L. Faddeev,  Quantization of Lie groups and Lie algebras. (Russian) {\it Algebra i Analiz} {\bf 1} (1989), 178--206; translation in {\it Leningrad Math. J.} {\bf 1} (1990),  193--225


\bibitem[Se1]{Se1} A. Sergeev, The centre of enveloping algebra for Lie superalgebra $Q(n,\,C)$, {\it Lett. Math. Phys.}
{\bf 7}  (1983),  177--179.

\bibitem[Se2]{Se2} A. Sergeev, Tensor algebra of the identity representation as a module over the Lie superalgebras ${\rm Gl}(n,\,m)$ and $Q(n)$ (Russian),  {\it Mat. Sb. (N.S.)} {\bf 123(165)} (1984), 422--430.

\bibitem[Sh]{Sh} G. Shimura, Arithmetic and Analytic Theories of Quadratic Forms and Clifford Groups,
{\it Mathematical Surveys and Monographs} {\bf 109}, American
Mathematical Society, Providence, RI, 2004.

%\bibitem[W]{W} H. H. Wu, The Bochner Technique in Differential Geometry, {\it Gordon and Breach Publishing Group}, 1988.



\end{thebibliography}
\end{document}